\newcommand{\ra}{\rightarrow}
\newcommand{\RA}{\Rightarrow}
\newcommand{\LRA}{\Leftrightarrow}
\newcommand{\N}{\mathbb{N}}
\newcommand{\R}{\mathbb{R}}
\newcommand{\sph}{\mathbb{S}}
\renewcommand{\d}{\text{d}}
\newcommand{\epsi}{\varepsilon}
\newcommand{\I}{\mathbb{I}}
\newcommand{\ind}{\mathds{1}}
\newcommand{\supp}{\,\textsf{supp}}
\newcommand{\Pc}{\mathcal{P}}
\newcommand{\B}{\mathcal{B}}
\newcommand{\Bf}{\mathfrak{B}}
\newcommand{\G}{\mathcal{G}}
\newcommand{\D}{\mathcal{D}}
\newcommand{\Nc}{\mathcal{N}}
\newcommand{\U}{\mathcal{U}}
\newcommand{\gap}{\textsf{gap}}
\newcommand{\W}{\mathcal{W}}
\newcommand{\Dob}{\textsf{Dob}}
\newcommand{\IAT}{\textsf{IAT}}
\newcommand{\dif}[2]{\frac{\d #1}{\d #2}}
\newcommand\ccint[2]{\,[#1,#2]}
\newcommand\ooint[2]{\,]#1,#2[}
\newcommand\ocint[2]{\,]#1,#2]}
\newcommand\coint[2]{\,[#1,#2[}
\newcommand\oointv[2]{\,\left]#1,#2\right[}
\newcommand\ocintv[2]{\,\left]#1,#2\right]}
\newcommand\cointv[2]{\,\left[#1,#2\right[}
\newtheorem{discussion}[theorem]{Discussion}
\begin{document}

\section{Introduction} \label{Sec:intro}

Sampling from intractable distributions is a ubiquitous problem in all fields that rely on probabilistic modeling. One of the standard approaches to solve this problem is Markov chain Monte Carlo (MCMC), which works by constructing a Markov chain whose iterate distribution converges to the target distribution as the chain progresses. Samples from such a chain are then used as approximate samples from the target. Although there are many heuristics for assessing the quality of such samples, there is no general way to definitively determine how well they are suited for estimating an unknown target quantity. It is therefore of vital importance to gain a good understanding of the theoretical properties of MCMC methods, as these provide more reliable, general insights than heuristics and numerical results.

In this paper, we focus on a class of MCMC methods called \textit{slice sampling}, which originated in physics research and was popularized in statistics in \cite{BesagGreen}. We view slice sampling as being defined for distributions on $(\R^d, \B(\R^d))$, where $\B(G)$ denotes the Borel-sigma algebra of a set $G$, though we note that recent advances have also extended the concept to the $d$-sphere \cite{SphericalSS}. More specifically, the setting we assume is the following. Let $\nu$ be a probability measure (in the following often just called \textit{distribution}) on $(\R^d, \B(\R^d))$. Suppose that $\nu$ has a potentially unnormalized Lebesgue density, that is, a measurable function $\eta: \R^d \ra \R_+ := \coint{0}{\infty}$ such that
\begin{equation*}
	\nu(A)
	= \frac{\int_A \eta(x) \d x}{\int_{\R^d} \eta(x) \d x} ,
	\qquad A \in \B(\R^d) .
\end{equation*}
We assume that we can evaluate $\eta$ at any given point $x \in \R^d$ but that its normalization constant (the denominator in the above expression) is unknown. Slice sampling for $\nu$ further assumes $\eta$ to be factorized into measurable functions $\eta_i: \R^d \ra \R_+$, $i=0,1$, such that
\begin{equation}
	\eta(x) = \eta_0(x) \eta_1(x) , \qquad x \in \R^d .
	\label{Eq:ss_factorization}
\end{equation}
Given an initial state $x_0 \in \R^d$, slice sampling for $\nu$ then generates a realization $(x_n)_{n \in \N_0}$ of a Markov chain $(X_n)_{n \in \N_0}$ with invariant distribution $\nu$ by setting $X_0 := x_0$ and using the following steps to perform the transition from $X_{n-1}$ to $X_n$:
\begin{enumerate}
	\item Sample $T_n \sim \U(\ooint{0}{\eta_1(x_{n-1})})$, call the result $t_n$.
	\item Sample $X_n \sim \mu_{t_n}$, where $\mu_{t_n}$ is defined by\footnote{We denote by $\I(cond)$ an indicator that takes the value $1$ whenever the condition $cond$ is satisfied and the value $0$ otherwise.}.
	\begin{equation*}
		\mu_t(A)
		:= \frac{\int_A \eta_0(x) \I(\eta_1(x) > t) \d x}{\int_{\R^d} \eta_0(x) \I(\eta_1(x) > t) \d x} ,
		\qquad A \in \B(\R^d), t > 0,
	\end{equation*}
	call the result $x_n$.
\end{enumerate}
Following \cite{GPSS}, we refer to step 2 as the \textit{$X$-update} of the slice sampling transition. Intuitively, the $X$-update samples the new state from the distribution whose (unnormalized) density is the first factor $\eta_0$ restricted to the super-level set (or \textit{slice}) w.r.t.~the second factor $\eta_1$ at the current threshold $t_n$.

The above definition is frequently restricted by specifying a mapping that determines for each target density $\eta$ the factorization \eqref{Eq:ss_factorization}, thereby defining what we call a \textit{slice sampling variant}. In our view, only three such variants are currently practically relevant. First there is \textit{uniform slice sampling} (USS), which uses the factorization
\begin{equation*}
	\eta_0(x) := 1 , 
	\qquad 
	\eta_1(x) := \eta(x) .
\end{equation*}
Then there is \textit{polar slice sampling} (PSS), proposed in \cite{PolarSS}, which uses
\begin{equation*}
	\eta_0(x) := \norm{x}^{1-d} ,
	\qquad 
	\eta_1(x) := \norm{x}^{d-1} \eta(x)
\end{equation*}
for $x \neq 0$, where $\norm{\cdot}$ denotes the Euclidean norm on $\R^d$.
Finally, there is \textit{Gaussian slice sampling} (GSS), which sets $\eta_0$ to the density of a multivariate Gaussian and $\eta_1$ to $\eta$ divided by it.

As all three variants lead to an $X$-update that is typically infeasible to implement, in practice one uses methods that mimic these ``ideal'' formulations while enabling computationally efficient implementation. For USS, one can use the stepping-out and shrinkage procedures proposed in \cite{SSNeal}, for PSS the Gibbsian polar slice sampling framework recently developed in \cite{GPSS}, and for GSS the elliptical slice sampling framework proposed in \cite{EllipticalSS}. Of course theoretical results for the ideal methods do not readily yield results for their efficient approximations. However, for USS it has been shown in \cite{Latuszynski} that some theoretical results, even quantitative ones, can be explicitly transferred from the ideal to the efficient version. We are cautiously optimistic that analogous theories can also be developed for PSS and GSS, making it all the more worthwhile to investigate the theoretical properties of the ideal methods.

Though we are of course also interested in GSS, we focus our efforts in this paper exclusively on the other two methods, USS and PSS. Moreover, we observe that these two can often be analyzed simultaneously. To this end, we propose a new class of slice sampling variants, which we term \textit{$k$-polar slice sampling} ($k$-PSS). For any fixed $k \in \R_{>0} := \ooint{0}{\infty}$, we define $k$-PSS as slice sampling with factorization
\begin{align*}
	\eta_0(x) 
	&:= \eta_{k,0}(x) 
	:= \norm{x}^{k-d} , \\
	\eta_1(x)
	&:= \eta_{k,1}(x) 
	:= \norm{x}^{d-k} \eta(x) 
\end{align*}
for $x \neq 0$. Clearly, $k$-PSS coincides with PSS for $k=1$ and with USS for $k=d$. For $1 \leq k \leq d$, it continuously interpolates between USS and PSS, and for $k < 1$ and $k > d$ it extrapolates beyond them in both directions. As we will see in the following, the quantitative theoretical properties of $k$-PSS often also interpolate between those of USS and PSS. However, we emphasize right away that we do not deem the $k$-PSS methods apart from USS and PSS to be useful in actual practical use as samplers. Rather, we view them as technical tools that we can use to prove theoretical results, and, as we shall see later on (in Theorem \ref{Thm:gap_estimate}), their use in this regard goes far beyond unifying the analysis of USS and PSS.

Let us now briefly mention some related work. It is well-known that slice sampling is geometrically ergodic (essentially meaning that the total variation distance between iterate distribution and target distribution converges geometrically to zero) under extremely weak assumptions \cite{SliceConv}. However, this is a purely qualitative result, as the convergence can in practice still be arbitrarily slow, and -- unsurprisingly -- the known quantitative results are much less extensive. Some quantitative convergence guarantees were already established in \cite{SliceConv}, but only under fairly restrictive conditions, including on the initial value, and the guarantees given there were, in retrospect, far from sharp. Realistic quantitative results on slice sampling are a comparatively recent achievement. In \cite{SlavaUSS}, the authors examine Wasserstein contraction rates and spectral gaps (see below) of USS and in \cite{PSS_paper} some of their methodology is extended to general slice sampling and a dimension-independent lower bound on the spectral gap of PSS, for rotationally invariant targets that are log-concave along rays emanating from the origin, is proven.

Following the example of \cite{SlavaUSS} and \cite{PSS_paper}, we also focus on Wasserstein contraction rates and spectral gaps as quantities of interest in order to arrive at realistic quantitative results. We now briefly introduce and motivate both of these quantities.

Let $(G,\G)$ be a measurable space and let $P$ be a Markov kernel on $G \times \G$ with invariant distribution $\nu \in \Pc(G)$, where $\Pc(G)$ denotes the set of probability measures on $(G,\G)$. Define the $L_2$-space belonging to $\nu$ as
\begin{equation*}
	L_2(\nu)
	:= \{ g: G \ra \R \;\colon \norm{g}_{2,\nu} < \infty \} ,
\end{equation*}
where
\begin{equation*}
	\norm{g}_{2,\nu} := \left( \int_G g(x)^2 \nu(\d x) \right)^{1/2} .
\end{equation*}
Observe that $P$ acting on measurable functions $g: G \ra \R$ via $g \mapsto P g$ with
\begin{equation*}
	P g (x)
	:= \int_G g(y) P(x, \d y) ,
	\qquad x \in G
\end{equation*}
defines a linear operator mapping $L_2(\nu) \ra L_2(\nu)$. Note further that by interpreting $\nu$ as a Markov kernel that is constant in its first argument, $\nu$ also defines a linear operator $L_2(\nu) \ra L_2(\nu)$. This framing allows us to define the \textit{spectral gap} of $P$ as
\begin{equation*}
	\gap_{\nu}(P)
	:= 1 - \norm{P - \nu}_{L_2(\nu) \ra L_2(\nu)} ,
\end{equation*}
where $\norm{\cdot}_{L_2(\nu) \ra L_2(\nu)}$ is the operator norm w.r.t.~$\norm{\cdot}_{2,\nu}$. 

The result regarding spectral gaps that one is typically interested in is a lower bound, i.e.~a statement of the form $\gap_{\nu}(P) \geq \delta$ for some $\delta > 0$ (in the following simply called \textit{spectral gap estimate}). The reason for this is that there are various useful implications of such a bound. Suppose $\gap_{\nu}(P) \geq 1 - \rho$ for some $\rho \in \ooint{0}{1}$, then it is well-known, see e.g.~\cite[Lemma 2]{Novak}, that for any initial distribution $\xi \in \Pc(G)$ with $\xi \ll \nu$ one has
\begin{equation}
	d_{\text{tv}}(\xi P^n, \nu)
	\leq \rho^n \norm{ \dif{\xi}{\nu} - 1 }_{2,\nu} , 
	\qquad n \in \N_0 ,
	\label{Eq:gap_implied_convergence}
\end{equation}
where $\dif{\xi}{\nu}$ denotes the Radon-Nikodym derivative of $\xi$ w.r.t.~$\nu$ and $d_{\text{tv}}$ the total variation distance, which for $\xi_1, \xi_2 \in \Pc(G)$ is defined as
\begin{equation*}
	d_{\text{tv}}(\xi_1, \xi_2)
	:= \sup_{A \in \G} \abs{\xi_1(A) - \xi_2(A)} .
\end{equation*}
Note that $\xi P^n$ is the distribution of $X_n$ when $(X_n)_{n \in \N_0}$ is a Markov chain with transition kernel $P$ and initial distribution $X_0 \sim \xi$, see \cite[Theorem 1.3.4]{MCbook}. Aside from this geometric convergence result, consequences of a spectral gap estimate include a quantitative bound on the error of Monte Carlo integration \cite[Theorem 3.41]{RudolfDiss}, a central limit theorem \cite{gapCLT} and an estimate of the central limit theorem's asymptotic variance \cite{gapCLTasvar}.

The second kind of quantity we examine is the Wasserstein contraction rate. To express it, denote by $\W$ the \textit{Wasserstein 1-distance}, which for $\xi_1, \xi_2 \in \Pc(\R^d)$ is defined as
\begin{equation*}
	\W(\xi_1, \xi_2)
	:= \inf_{\gamma \in \Gamma(\xi_1, \xi_2)} \int_{\R^d \times \R^d} \norm{x - y} \gamma(\d x, \d y) ,
\end{equation*}
where $\Gamma(\xi_1, \xi_2)$ denotes the set of couplings of $\xi_1$ and $\xi_2$,
\begin{equation*}
	\Gamma(\xi_1, \xi_2)
	:= \{ \gamma \in \Pc(\R^d \times \R^d) \mid \forall A \in \B(\R^d): \; \gamma(A \times \R^d) = \xi_1(A), \; \gamma(\R^d \times A) = \xi_2(A) \} .
\end{equation*}
Suppose now that $G \subseteq \R^d$ is a non-empty, closed set and let $P$ be a Markov kernel on $G \times \B(G)$. Then the \textit{Dobrushin coefficient} $\Dob(P)$ of $P$ is given by
\begin{equation*}
	\Dob(P)
	= \sup_{x,y \in G, \; x \neq y} \frac{\W(P(x,\cdot),P(y,\cdot))}{\norm{x - y}} ,
\end{equation*}
see \cite[Definition 20.3.1, Lemma 20.3.2]{MCbook}. Given this definition, $P$ is called \textit{Wasserstein contractive} if $\Dob(P) < 1$. Moreover, it is called \textit{Wasserstein contractive with rate $\rho$} if $\Dob(P) \leq \rho < 1$.

There are two main reasons why Wasserstein contraction is an extremely desirable property. The first is that it implies a very intuitive convergence result w.r.t.~the Wasserstein distance: Suppose that $P$ is Wasserstein contractive with rate $\rho$ and has $\nu \in \Pc(G)$ as an invariant distribution, then for any initial distribution $\xi \in \Pc(G)$ one has
\begin{equation*}
	\W(\xi P^n, \nu) \leq \rho^n \W(\xi, \nu) ,
	\qquad n \in \N_0 ,
\end{equation*}
see \cite[Theorem 20.3.4]{MCbook}. In other words, one has a bound for the error expressed as the Wasserstein distance between iterate and target distribution, and that error bound decreases by the factor $\rho$ with each step of the Markov chain, with its coefficient explicitly given by $\W(\xi,\nu)$, the Wasserstein distance between initial and target distribution.

The second reason why Wasserstein contraction is particularly desirable is that it is a strictly more powerful result than a spectral gap estimate. That is, if $P$ is reversible w.r.t.~$\nu$, and if additionally $\nu$ has finite second moment, meaning $\int_{\R^d} \norm{x}^2 \nu(\d x) < \infty$, then one has
\begin{equation}
	\Dob(P) \leq \rho
	\qquad \RA \qquad 
	\gap_{\nu}(P) \geq 1 - \rho ,
	\label{Eq:contraction_to_gap}
\end{equation}
for any $\rho < 1$, see \cite[Proposition 30]{Ricci}. Hence all the implications of a spectral gap estimate that we mentioned above are also implications of a kernel being Wasserstein contractive.

Given that Wasserstein contraction is a strictly more powerful result than a spectral gap, it is unsurprising that the former is also much harder to establish than the latter. Accordingly, we begin our investigation by proving Wasserstein contraction of slice sampling wherever feasible. First, we generalize the contraction result of \cite{SlavaUSS} from USS to $k$-PSS, proving that $k$-PSS is Wasserstein contractive for all target densities $\eta$ for which the factors $\eta_{k,1}$ are rotationally invariant and log-concave. Afterwards, we investigate the Wasserstein contraction rates of slice sampling for heavy-tailed distributions. Here we end up computing precise rates for, on the one hand, USS applied to standard multivariate $t$-distributions and, on the other hand, $k$-PSS applied to a class of distributions that mimics the aforementioned one, while being much simpler to analyze.

We then augment our contraction results by proving spectral gap estimates for broader classes of distributions, even some that are rotationally asymmetric (unlike any of those for which contraction results on slice sampling are available). To enable more elegant and often also further-reaching results, we begin this part of the paper by improving a theorem that can provide realistic estimates of the spectral gap of slice sampling. We note here that the best previously known version of this theorem was proven in \cite{PSS_paper}, whereas the original theorem, applying just to USS, was developed in \cite{SlavaUSS}. Given this improved theorem, we first consider $k$-PSS for classes of rotationally invariant target densities $\eta$ with factors $\eta_{k,1}$ of varying degrees of log-concavity, even reaching into log-convexity. We then examine $k$-PSS for a class of rotationally asymmetric targets, which allows us in particular to prove a realistic spectral gap estimate for USS applied to arbitrary multivariate Gaussian distributions. Finally, we consider USS in isolation once more to augment the aforementioned contraction result for standard multivariate $t$-distributions by a spectral gap estimate for arbitrary multivariate $t$-distributions.

We take this opportunity to emphasize that our interest in the quantitative properties of slice sampling for standard distributions (such as Gaussians or multivariate $t$-distributions) does not reflect a desire to actually apply slice sampling to such targets in practice. Even if strong guarantees regarding the performance of slice sampling are available, standard methods to directly generate samples from these tractable targets (i.e.~ways to suitably transform samples from the uniform distribution) would still be substantially more efficient. Rather, we view the standard distributions we often consider in this paper as proxies for intractable target distributions that share some of the standard distributions' properties (e.g.~their tail behavior). We then expect our results on the performance of slice sampling for standard targets to be indicative, to some extent, of their performance for these somehow similar intractable targets. Of course this then further needs to carry over to the performance of the efficiently implementable approximations of the slice samplers we analyze here, see our remarks on this above.

The paper is structured as follows. Section \ref{Sec:contraction} details our investigation of the Wasserstein contraction rates of slice sampling for different classes of target distributions. Section \ref{Sec:gap_estimates} establishes a theorem that allows estimating the spectral gap of slice sampling, and then discusses several applications of said theorem. We conclude each subsection with a detailed discussion of our results. Some auxiliary results are stated and (if necessary) proven in Appendix \ref{App:aux_res}. Finally, Appendix \ref{App:num_supp} gives some numerical results that strongly suggest the sharpness of one of our spectral gap results, for which we were unable to establish this sharpness theoretically.

Throughout the paper, we use a few non-notational conventions: When talking about densities, we mean densities with respect to the Lebesgue measure, unless stated otherwise. We do not require densities to be normalized\footnote{In our view, this admittedly somewhat unconventional nomenclature is justified by our using the term \textit{density}, rather than \textit{probability density function}, as only the latter explicitly implies normalization.} and will instead explicitly emphasize whenever a density is required or constructed to be normalized. Finally, it should be noted that we view a probability distribution and its density as a fixed unit, ignoring the ambiguous normalization and the fact that non-negative functions which coincide with the density almost everywhere are also densities for the same distribution. Hence, whenever we introduce a distribution with a specific density and talk about a sampler for this distribution, we actually mean the sampler for the density we specified.

\section{Wasserstein Contraction Results} \label{Sec:contraction}

\subsection{Targets with Log-Concave Factors}

Our first result establishes, for any $k \in \R_{>0}$, Wasserstein contraction of $k$-PSS for a class of distributions $\D_k(\R^d)$ that is defined as follows. By this we generalize a result of \cite{SlavaUSS} that proved Wasserstein contraction of USS for all rotationally invariant, log-concave target densities.

\begin{definition} \label{Def:D_k_def}
	For any $k \in \R_{>0}$ we define a class $\D_k(\R^d) \subset \Pc(\R^d)$ of distributions by the constraint that each $\nu \in \D_k(\R^d)$ must have a density $\eta$ of the form
	\begin{equation}
		\eta(x)
		= \norm{x}^{k-d} \exp(-\phi(\norm{x})) \ind_{\ooint{0}{\kappa}}(\norm{x}) ,
		\label{Eq:D_k_def}
	\end{equation}
	where $\kappa \in \ocint{0}{\infty}$ and $\phi: \ooint{0}{\kappa} \ra \R$ is strictly increasing and convex.
\end{definition}

We state our Wasserstein contraction result and prove it using auxiliary results that we provide in Appendix \ref{App:aux_res}.

\begin{theorem} \label{Thm:contraction}
	For any $k > 0$ and $\nu \in \D_k(\R^d)$, $k$-PSS for $\nu$ is Wasserstein contractive with rate $\rho = k/(k+1)$.
\end{theorem}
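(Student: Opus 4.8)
The plan is to bound the Dobrushin coefficient of the $k$-PSS kernel by exhibiting, for each pair of starting points $x,y \in \R^d$, a coupling of $P(x,\cdot)$ and $P(y,\cdot)$ whose expected transport cost is at most $\frac{k}{k+1}\norm{x-y}$. The key structural observation is that for $\nu \in \D_k(\R^d)$ the factor $\eta_{k,1}(x) = \norm{x}^{d-k}\eta(x) = \exp(-\phi(\norm{x}))\ind_{\ooint{0}{\kappa}}(\norm{x})$ is rotationally invariant and log-concave \emph{as a function of $\norm{x}$}, while the factor $\eta_{k,0}(x) = \norm{x}^{k-d}$ is exactly the (unnormalized) density that, restricted to a Euclidean ball of radius $r$, gives the uniform distribution on the sphere of radius $s$ for $s$ distributed with density $\propto s^{k-1}$ on $\ooint{0}{r}$. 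Hence the $X$-update at threshold $t$ samples from the measure $\mu_t$ supported on the ball $\{\norm{x} < R(t)\}$, where $R(t) := \sup\{r : \phi(r) < -\log t,\ r < \kappa\}$ (decreasing in $t$), with radial part having density $\propto s^{k-1}$ on $\ooint{0}{R(t)}$ and angular part uniform on the sphere, independent of the radius.

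First I would make this decomposition explicit: the $k$-PSS transition from a point $x$ can be realized by (i) drawing $T \sim \U(\ooint{0}{\exp(-\phi(\norm{x}))})$, equivalently $\Phi := -\log T$ with a suitable density on $\coint{\phi(\norm{x})}{\infty}$, which determines the slice radius $R = R(T)$; (ii) drawing the new radius $\norm{X_n}$ with density $\propto s^{k-1}$ on $\ooint{0}{R}$; (iii) drawing the new direction $X_n/\norm{X_n}$ uniformly on the sphere. Crucially, steps (ii) and (iii) only depend on $x$ through $R$, and the direction in step (iii) can be coupled \emph{identically} for the two chains — so all the transport cost comes from the radial coordinate and the two (possibly different) slice radii. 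This reduces the problem to a one-dimensional contraction statement for the radii: I would couple the thresholds for $x$ and $y$ so that the induced slice radii $R_x, R_y$ are as close as possible (monotone coupling of the threshold variables), then couple the two radial draws from $\U$-type densities $\propto s^{k-1}$ on $\ooint{0}{R_x}$ and $\ooint{0}{R_y}$ by the inverse-CDF / common-uniform coupling, under which $\norm{X_n^{(x)}} = U^{1/k} R_x$ and $\norm{X_n^{(y)}} = U^{1/k} R_y$ for a shared $U \sim \U(\ooint{0}{1})$, giving radial discrepancy $U^{1/k}\abs{R_x - R_y}$ with expectation $\frac{k}{k+1}\abs{R_x - R_y}$. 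Combining the identical-direction coupling with this radial coupling, a short computation (triangle inequality / law of cosines in polar coordinates, or simply that with shared direction the points are $U^{1/k}R_x$ and $U^{1/k}R_y$ times the same unit vector) yields $\mathbb{E}\norm{X_n^{(x)} - X_n^{(y)}} = \frac{k}{k+1}\,\mathbb{E}\abs{R_x - R_y}$.

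It then remains to show $\mathbb{E}\abs{R_x - R_y} \leq \norm{x-y}$ under the threshold coupling, i.e.\ that the map from a starting point to its (randomized) slice radius is a Wasserstein-$1$ nonexpansion onto $\R_{\ge 0}$. This is where convexity and monotonicity of $\phi$ enter: because $\phi$ is increasing and convex, $r\mapsto -\log t$ level considerations show that for thresholds coupled through a common uniform the radii satisfy a $1$-Lipschitz-in-expectation bound in $\abs{\norm{x}-\norm{y}}$, and $\abs{\norm{x}-\norm{y}}\le\norm{x-y}$ finishes it; the edge cases $\kappa<\infty$ (radius capped at $\kappa$, which only helps) and $\phi$ merely convex rather than smooth are handled by monotonicity alone. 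I expect this last step — proving the radius map is a Wasserstein nonexpansion, handling the interaction between the threshold density's dependence on $\norm{x}$ and the cap at $\kappa$ — to be the main obstacle, and I anticipate it is exactly the content of the auxiliary results deferred to Appendix \ref{App:aux_res}; the rest is bookkeeping with the polar decomposition and the explicit $\mathbb{E}[U^{1/k}] = k/(k+1)$ computation that produces the stated rate.
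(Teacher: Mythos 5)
Your proposal matches the paper's proof essentially step for step: the polar decomposition of the $X$-update, the shared-uniform coupling of threshold, radius (via inverse CDF, yielding $U^{1/k}R_x$ and $U^{1/k}R_y$), and direction, the computation $\mathbb{E}[U^{1/k}] = k/(k+1)$, and the reduction to the radial non-expansion $\abs{R_x - R_y} \leq \abs{\norm{x}-\norm{y}}$, which the paper establishes pointwise in the shared threshold uniform (not merely in expectation) via Lemma~\ref{Lem:non_expansion_prop}. One small correction to your sketch: the $\kappa < \infty$ cases of that lemma still rely on convexity of $\phi$ (through the slope-monotonicity characterization $S_{\phi}$), not on monotonicity alone.
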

\begin{proof}
	For the entirety of the proof, arbitrarily fix $k > 0$ and $\nu \in \D_k(\R^d)$ with density $\eta$ as in \eqref{Eq:D_k_def}. To estimate the Wasserstein distance between two instances of $k$-PSS currently positioned at different points $x, y \in \R^d$, we construct a coupling between the respective transition kernels. To this end, we begin by analyzing how the slice sampling procedure simplifies for $k$-PSS on $\D_k(\R^d)$. First of all, note that in this setting we have
	\begin{equation}
		\eta_{k,1}(x) 
		= \exp(-\phi(\norm{x})) \ind_{\ooint{0}{\kappa}}(\norm{x})
		\label{Eq:D_k_likelihood_factor}
	\end{equation}
	for $x \neq 0$. Now observe that\footnote{See Lemma \ref{Lem:phi_inverse} for notation.}
	\begin{equation*}
		\sup \eta_{k,1}
		:= \sup_{0 \neq x \in \R^d} \eta_{k,1}(x) 
		= \exp(- \inf \phi) .
	\end{equation*}
	Hence we only need to consider values smaller than $\sup \eta_{k,1}$ for the variable $t$ standing in for the auxiliary variable $T_n$ in the following. Next, observe that
	\begin{equation*}
		\inf \eta_{k,1}
		:= \inf_{x \in \R^d, \; \norm{x} \in \ooint{0}{\kappa}} \eta_{k,1}(x)
		= \begin{cases}
			\exp(-\sup \phi) & \kappa < \infty , \\
			0 & \kappa = \infty .
		\end{cases}
	\end{equation*}
	We now characterize the slices of $k$-PSS for $\nu$. For $t \in \ocint{0}{\inf \eta_{k,1}}$ and $x \in \R^d$, one has
	\begin{align*}
		\eta_{k,1}(x) > t
		\quad \LRA \quad \norm{x} \in \ooint{0}{\kappa} .
	\end{align*}
	For $t \in \ooint{\inf \eta_{k,1}}{\sup \eta_{k,1}}$ and $0 \neq x \in \R^d$, we get\footnote{See Lemma \ref{Lem:phi_inverse} for context on $\phi^{-1}$.}
	\begin{align*}
		\eta_{k,1}(x) > t \quad 
		&\LRA \quad \exp(-\phi(\norm{x})) > t \quad \text{and} \quad \norm{x} < \kappa \\
		&\LRA \quad \phi(\norm{x}) < - \log t \quad \text{and} \quad \norm{x} < \kappa \\
		&\LRA \quad \norm{x} < \phi^{-1}(-\log t) \quad \text{and} \quad \norm{x} < \kappa \\
		&\LRA \quad \norm{x} < \phi^{-1}(-\log t) ,
	\end{align*}
	where the third equivalence relies on $\phi^{-1}$ being strictly increasing and the fourth on it mapping to $D_{\phi} = \oointv{0}{\kappa}$, s.t.~in particular $\phi^{-1} < \kappa$.
	Using the notation introduced in Lemma \ref{Lem:non_expansion_prop}, we can summarize these two cases as
	\begin{equation}
		\eta_{k,1}(x) > t
		\quad \LRA \quad 
		\norm{x} \in \, \ooint{0}{\widehat{\phi}^{-1}(-\log t)}
		\label{Eq:L(t)_for_contraction}
	\end{equation}
	for any $t \in \ooint{0}{\sup \eta_{k,1}}$. With this we can write
	\begin{equation}
		\eta_{k,0}(x) \I(\eta_{k,1}(x) > t)
		= \norm{x}^{k-d} \ind_{\ooint{0}{\widehat{\phi}^{-1}(-\log t)}}(\norm{x})
		= h_{1,t}(\norm{x})
		\label{Eq:contraction_X_up_density}
	\end{equation}
	for $x \neq 0$, where we define $h_{1,t}: \R_+ \ra \R_+$ by
	\begin{equation*}
		h_{1,t}(r)
		:= r^{k-d} \ind_{\ooint{0}{\widehat{\phi}^{-1}(-\log t)}}(r) .
	\end{equation*}
	Denote by $\lambda_d$ the Lebesgue measure on $(\R^d, \B(\R^d))$, by $\sph^{d-1} := \{x \in \R^d \;\colon \norm{x} = 1\}$ the $(d-1)$-sphere and by $\sigma_d$ the surface measure on $(\sph^{d-1}, \B(\sph^{d-1}))$, that is, the unique rotationally invariant measure whose total mass $\omega_d := \sigma_d(\sph^{d-1})$ equals the surface area of $\sph^{d-1}$.
	
	Then, by \eqref{Eq:contraction_X_up_density} and Corollary \ref{Cor:sampling_in_polar_coords}, the $X$-update of $k$-PSS, given $T_n = t$, can be implemented in polar coordinates as $X_n := R_n \, \Theta_n$ by sampling $(R_n, \Theta_n)$ from the joint distribution with density
	\begin{equation*}
		(r, \theta)
		\mapsto h_{2,t}(r)
		:= r^{d-1} h_{1,t}(r) \ind_{\R_+}(r)
		= r^{k-1} \ind_{\ooint{0}{\widehat{\phi}^{-1}(-\log t)}}(r)
	\end{equation*}
	w.r.t.~$\lambda_1 \otimes \sigma_d$. As this density is constant in $\theta$, it can in turn be implemented by independently sampling $\Theta_n$ from $\U(\sph^{d-1})$ and $R_n$ from the distribution with density $h_{2,t}$. To couple the sampling of $R_n$, we apply the inversion method (also called \textit{inverse transform sampling}). For that we first need to compute the c.d.f.~$F_t$ corresponding to density $h_{2,t}$. Let $s \in \ccint{0}{\widehat{\phi}^{-1}(-\log t)}$, then
	\begin{equation*}
		F_t(s)
		= \frac{\int_{-\infty}^s h_{2,t}(r) \d r}{\int_{-\infty}^{\infty} h_{2,t}(r) \d r}
		= \frac{\int_0^s r^{k-1} \d r}{\int_0^{\widehat{\phi}^{-1}(-\log t)} r^{k-1} \d r}
		= \frac{\left[\frac{1}{k} r^k\right]_0^s}{\left[\frac{1}{k} r^k\right]_0^{\widehat{\phi}^{-1}(-\log t)}}
		= \left( \frac{s}{\widehat{\phi}^{-1}(-\log t)} \right)^k_{\textbf{.}}
	\end{equation*}
	From this it is easy to see that $F_t$ restricted to $\ccint{0}{\widehat{\phi}^{-1}(-\log t)}$ maps bijectively onto $\ccint{0}{1}$ and that the inverse $F_t^{-1}: [0,1] \ra \ccint{0}{\widehat{\phi}^{-1}(-\log t)}$ of this restriction is given by
	\begin{equation*}
		F_t^{-1}(u)
		= \widehat{\phi}^{-1}(- \log t) u^{1/k} ,
		\qquad u \in \ccint{0}{1} .
	\end{equation*}
	
	Denote by $P$ the transition kernel of $k$-PSS for $\nu$ and fix $x,y \in \R^d$ with $\norm{x}, \norm{y} \in \ooint{0}{\kappa}$. Intuitively, we can now couple the sampling from $P(x,\cdot)$ and $P(y,\cdot)$ as follows:
	\begin{enumerate}
		\item Sample $U_1 \sim \U(\ooint{0}{1})$, call the result $u_1$ and set $t_x := u_1 \eta_{k,1}(x)$ and $t_y := u_1 \eta_{k,1}(y)$.
		\item Sample $U_2 \sim \U(\ooint{0}{1})$, call the result $u_2$ and set $r_x := F_{t_x}^{-1}(u_2)$ and $r_y := F_{t_y}^{-1}(u_2)$.
		\item Sample $\Theta \sim \U(\sph^{d-1})$, call the result $\theta$ and return $r_x \theta$ and $r_y \theta$ as the new samples.
	\end{enumerate}
	This corresponds to the coupling $\gamma_{x,y}$ determined by
	\begin{align*}
		\gamma_{x,y}(A \times B) 
		:= \omega_d^{-1} \int_0^1 \int_0^1 \int_{\sph^{d-1}} &\ind_A\!\left( F_{u_1 \eta_{k,1}(x)}^{-1}(u_2) \theta \right) \\
		\cdot\; &\ind_B\!\left( F_{u_1 \eta_{k,1}(y)}^{-1}(u_2) \theta \right) \sigma_d(\d \theta) \d u_2 \d u_1
	\end{align*}
	for $A, B \in \B(\R^d)$. Using this coupling, we can estimate the Wasserstein distance as
	\begin{align*}
		&\W(P(x,\cdot),P(y,\cdot)) \\
		&\leq \int_{\R^d \times \R^d} \norm{\tilde{x} - \tilde{y}} \gamma_{x,y}(\d \tilde{x} \times \d \tilde{y}) \\ 
		&= \omega_d^{-1} \int_0^1 \int_0^1 \int_{\sph^{d-1}} \norm{ F_{u_1 \eta_{k,1}(x)}^{-1}(u_2) \theta - F_{u_1 \eta_{k,1}(y)}^{-1}(u_2) \theta } \sigma_d(\d \theta) \d u_2 \d u_1 \\ 
		&= \int_0^1 \int_0^1 \abs{ F_{u_1 \eta_{k,1}(x)}^{-1}(u_2) - F_{u_1 \eta_{k,1}(y)}^{-1}(u_2) } \d u_2 \d u_1 \\ 
		&= \int_0^1 \int_0^1 \abs{ \widehat{\phi}^{-1}\big(-\log(u_1 \eta_{k,1}(x))\big) - \widehat{\phi}^{-1}\big(-\log(u_1 \eta_{k,1}(y))\big) } u_2^{1/k} \d u_2 \d u_1 \\ 
		&= \frac{k}{k+1} \int_0^1 \abs{ \widehat{\phi}^{-1}\big(\phi(\norm{x}) - \log u_1\big) - \widehat{\phi}^{-1}\big(\phi(\norm{y}) - \log u_1\big) } \d u_1 \\ 
		&\leq \frac{k}{k+1} \abs{\norm{x} - \norm{y}} , 
	\end{align*}
	where we use 
	\begin{equation}
		\int_0^1 u^{1/p} \d u
		= \left[ \frac{1}{1 + 1/p} u^{1+1/p} \right]_0^1 
		= \frac{1}{1 + 1/p}
		= \frac{p}{p+1} ,
		\qquad p \in \R \setminus \{-1\}
		\label{Eq:u_integral_formula}
	\end{equation}
	to compute the $u_2$-integral and apply Lemma \ref{Lem:non_expansion_prop} and monotonicity of the Lebesgue integral to obtain the inequality in the last line. Furthermore, by triangle inequality of the Euclidean norm, one has $\abs{\norm{x} - \norm{y}} \leq \norm{x-y}$, which is sharp whenever $x$ and $y$ lie on the same ray emanating from the origin. Thus we now have
	\begin{equation*}
		\Dob(P)
		= \sup_{x \neq y} \frac{\W(P(x,\cdot),P(y,\cdot))}{\norm{x - y}}
		\leq \sup_{x \neq y} \frac{\frac{k}{k+1} \abs{\norm{x} - \norm{y}}}{\norm{x - y}}
		= \frac{k}{k+1}_,
	\end{equation*}
	proving that $P$ is Wasserstein contractive with rate $\rho = k/(k+1)$.
\end{proof}

\begin{discussion} \label{Dis:contraction_result} \
	\begin{enumerate}[(a)]
		\item We can alternatively state Theorem \ref{Thm:contraction} as follows. If $\nu \in \Pc(\R^d)$ is a distribution with density $\eta$ and $k > 0$ a constant such that $\eta_{k,1}$ is rotationally invariant and log-concave (meaning that it takes the shape \eqref{Eq:D_k_likelihood_factor}), then $k$-PSS for $\nu$ is Wasserstein contractive with rate $\rho = k/(k+1)$. This interpretation would become particularly relevant if one were to view the function $\eta_{k,0}$ as an (improper) prior and ask about the performance of the corresponding slice sampler for rotationally invariant, log-concave likelihood functions $\eta_{k,1}$. An analogous interpretation also makes sense for two of our later results, Theorems \ref{Thm:k-PSS_rot_inv} and \ref{Thm:k-PSS_non_rot}.
		\item Letting $k = d$ in Theorem \ref{Thm:contraction}, we retrieve precisely \cite[Theorem 2.1]{SlavaUSS}, the result we are generalizing, which established Wasserstein contraction with rate $\rho = d/(d+1)$ of USS for rotationally invariant, log-concave densities.
		\item With $k = 1$, Theorem \ref{Thm:contraction} establishes Wasserstein contraction of PSS, for which, to our knowledge, no such result had been proven yet, with rate $\rho = 1/2$. Though the property is only shown for densities that have a pole of order $(d-1)$ at zero, we still deem it noteworthy that the contraction rate is $1/2$, regardless of the dimension $d$ of the underlying space $\R^d$ (in contrast to the result for USS). 
		\item Furthermore, we make note of the fact that the densities for which Wasserstein contraction of PSS is proven are not necessarily log-concave, not even along rays emanating from the origin. For example, with $\kappa = \infty$ and $\phi(r) := \alpha \, r$ for an $\alpha > 0$, we get $\log \eta(r \theta) = (1-d) \log r - \alpha \, r$ for $r \in \R_{>0}$, $\theta \in \sph^{d-1}$, which for $d > 1$ is actually strictly convex in $r$. This is particularly noteworthy because even the broadest previously known quantitative convergence result for PSS \cite[Theorem 7]{PolarSS} required log-concavity (at least along rays emanating from the origin). In this sense, our result broadens the range of target distributions in which quantitative convergence guarantees for PSS are available. We will further extend this range in the following subsection.
		\item Finally, but perhaps most importantly, as $k$-PSS continuously interpolates between USS and PSS through the parameter $k$, the contraction rate $\rho = k/(k+1)$ provided by Theorem \ref{Thm:contraction} interpolates accordingly, which shows the interpolation between the samplers to be meaningful in the context of convergence rates.
	\end{enumerate}
\end{discussion}

\subsection{Heavy-Tailed Targets} \label{Sec:contraction_heavy-tailed}

Some numerical experiments recently published in \cite{GPSS} suggest that PSS performs extremely well for heavy-tailed target distributions in high dimension (provided that they are centered on the origin), whereas USS performs very poorly for the same targets. However, as far as we can tell, no quantitative theoretical results are available to substantiate this hypothesis. Even beyond these specifics, the quantitative theoretical properties of slice sampling for heavy-tailed targets appear to be entirely unexplored.

We therefore deem it worthwhile to detail our investigation of the theoretical behavior of PSS and USS for certain -- admittedly somewhat inflexible -- classes of heavy-tailed targets. The results we state and prove in this section, Theorems \ref{Thm:USS_contraction_std_multiv_t} and \ref{Thm:contraction_heavy-tailed}, as well as Propositions \ref{Prop:USS_contraction_sharp} and \ref{Prop:contraction_heavy-tailed_sharp}, should serve to provide some first answers about the samplers' properties and may well inspire future work on the topic.

Arguably the textbook example of multivariate heavy-tailed distributions are the \textit{multivariate $t$-distributions}, which in their standard form have unnormalized densities of the form \eqref{Eq:std_multiv_t} on $\R^d$, where $m > 0$ is called the \textit{degrees of freedom} parameter. Considering only USS, it is feasible to analyze the behavior for \eqref{Eq:std_multiv_t}, and we obtain the following result.

\begin{theorem} \label{Thm:USS_contraction_std_multiv_t}
	Let $\nu \in \Pc(\R^d)$ be a distribution with density $\eta$ given by
	\begin{equation}
		\eta(x) 
		= \left(1 + \frac{1}{m} \norm{x}^2\right)^{-(d+m)/2} ,
		\label{Eq:std_multiv_t}
	\end{equation}
	where $m \in \ooint{1}{\infty}$. Then USS for $\nu$ is Wasserstein contractive with rate
	\begin{equation*}
		\rho = \frac{d (d+m)}{(d+1) (d+m-1)}_{\textbf{.}}
	\end{equation*}
\end{theorem}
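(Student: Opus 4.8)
The plan is to reproduce the coupling argument from the proof of Theorem~\ref{Thm:contraction}, specialized to $k=d$ (which is USS) and to the explicit density~\eqref{Eq:std_multiv_t}. First I would note that $\eta$ is rotationally invariant, $\eta(x)=g(\norm{x})$ with $g(r):=(1+r^2/m)^{-(d+m)/2}$ continuous and strictly decreasing on $\R_+$, so that $\sup\eta=g(0)=1$ and, for every threshold $t\in\ooint{0}{1}$ that can occur, the slice of USS is exactly the open ball $\{x:\norm{x}<R_t\}$ with $R_t:=g^{-1}(t)=\sqrt{m(t^{-2/(d+m)}-1)}$. On this slice $\eta_0\equiv 1$, hence by Corollary~\ref{Cor:sampling_in_polar_coords} the $X$-update given $T_n=t$ can be realized as $X_n=R_n\Theta_n$ with $\Theta_n\sim\U(\sph^{d-1})$ independent of $R_n$, the latter having density proportional to $r^{d-1}\ind_{\ooint{0}{R_t}}(r)$; the associated c.d.f.\ is $F_t(s)=(s/R_t)^d$ on $\ccint{0}{R_t}$ with inverse $F_t^{-1}(u)=R_t\,u^{1/d}$.

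I would then use the same three-source coupling of $P(x,\cdot)$ and $P(y,\cdot)$ as in Theorem~\ref{Thm:contraction}: a shared $U_1\sim\U(\ooint{0}{1})$ defining thresholds $t_x=U_1\eta(x)$, $t_y=U_1\eta(y)$; a shared $U_2\sim\U(\ooint{0}{1})$ defining radii $r_x=F_{t_x}^{-1}(U_2)$, $r_y=F_{t_y}^{-1}(U_2)$; and a shared direction $\Theta\sim\U(\sph^{d-1})$. Since $\norm{r_x\Theta-r_y\Theta}=\abs{r_x-r_y}=\abs{R_{u_1\eta(x)}-R_{u_1\eta(y)}}\,u_2^{1/d}$, this coupling gives
\begin{equation*}
	\W(P(x,\cdot),P(y,\cdot))
	\leq \int_0^1\int_0^1 \abs{R_{u_1\eta(x)}-R_{u_1\eta(y)}}\,u_2^{1/d}\,\d u_2\,\d u_1 ,
\end{equation*}
and the inner integral equals $d/(d+1)$ by~\eqref{Eq:u_integral_formula}.

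The heart of the proof is a sharp bound on $\abs{R_{u_1\eta(x)}-R_{u_1\eta(y)}}$. Writing $c:=u_1^{-2/(d+m)}\geq 1$ and using $\eta(x)^{-2/(d+m)}=1+\norm{x}^2/m$, a direct computation gives $R_{u_1\eta(x)}=\sqrt{c\norm{x}^2+m(c-1)}$, and likewise for $y$. Because $m(c-1)\geq 0$, the identity $\sqrt{p}-\sqrt{q}=(p-q)/(\sqrt{p}+\sqrt{q})$ together with the lower bound $\sqrt{c\norm{x}^2+m(c-1)}+\sqrt{c\norm{y}^2+m(c-1)}\geq\sqrt{c}\,(\norm{x}+\norm{y})$ yields $\abs{R_{u_1\eta(x)}-R_{u_1\eta(y)}}\leq\sqrt{c}\,\abs{\norm{x}-\norm{y}}=u_1^{-1/(d+m)}\abs{\norm{x}-\norm{y}}$. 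Integrating this over $u_1$ via~\eqref{Eq:u_integral_formula} (the exponent $-1/(d+m)$ exceeds $-1$) gives $\int_0^1 u_1^{-1/(d+m)}\,\d u_1=(d+m)/(d+m-1)$, so, combining with $\abs{\norm{x}-\norm{y}}\leq\norm{x-y}$, we obtain $\W(P(x,\cdot),P(y,\cdot))\leq\rho\,\norm{x-y}$ with $\rho=\tfrac{d}{d+1}\cdot\tfrac{d+m}{d+m-1}=\tfrac{d(d+m)}{(d+1)(d+m-1)}$. Taking the supremum over $x\neq y$ yields $\Dob(P)\leq\rho$, and an elementary computation shows that $\rho<1$ is equivalent to $m>1$, precisely the standing assumption (which also ensures that $\nu$ has finite first moment, so that the resulting Wasserstein convergence bound is non-vacuous).

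I expect the main obstacle to be exactly that sharp radius estimate: one has to pin down the multiplicative factor $u_1^{-1/(d+m)}$ so that, after the $u_1$-integration, the constant matches the claimed $\rho$ and $\rho<1$ ends up equivalent to $m>1$. In Theorem~\ref{Thm:contraction} this role was played by the non-expansiveness of $\widehat{\phi}^{-1}$ from Lemma~\ref{Lem:non_expansion_prop}, but here the natural candidate $\psi(r):=\tfrac{d+m}{2}\log(1+r^2/m)$ with $g=e^{-\psi}$ is convex only on $\ooint{0}{\sqrt{m}}$ and concave on $\ooint{\sqrt{m}}{\infty}$, so that lemma does not apply and the explicit computation of $R_t$ seems unavoidable. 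The remaining verifications --- strict monotonicity of $g$, the form of $F_t^{-1}$, and well-definedness of the coupling for every threshold that occurs --- are routine.
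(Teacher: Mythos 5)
Your proposal follows essentially the same route as the paper: identify the slice as the centered ball $\Bf(R_t)$, couple $P(x,\cdot)$ and $P(y,\cdot)$ through shared $U_1$, $U_2$ and a shared direction, and then bound the difference of radii by a non-expansiveness estimate before factoring the double integral via \eqref{Eq:u_integral_formula}. The only place you diverge is in how the key bound
\begin{equation*}
	\abs{\sqrt{c\norm{x}^2+m(c-1)}-\sqrt{c\norm{y}^2+m(c-1)}} \leq \sqrt{c}\,\abs{\norm{x}-\norm{y}}
\end{equation*}
is obtained: you derive it by the conjugate identity $\sqrt{p}-\sqrt{q}=(p-q)/(\sqrt{p}+\sqrt{q})$, whereas the paper obtains exactly the same inequality by applying Lemma~\ref{Lem:non_expansion_prop} with $\phi(s):=s^2$, reading $\sqrt{a^2+v}$ as $\phi^{-1}(\phi(a)+v)$ with $v=m(c-1)\geq 0$. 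So your closing remark that Lemma~\ref{Lem:non_expansion_prop} ``does not apply'' here is slightly off: it does not apply to the ``natural'' log-density $\psi(r)=\tfrac{d+m}{2}\log(1+r^2/m)$, as you correctly observe, but the paper applies it to the more elementary map $s\mapsto s^2$, which sidesteps the non-convexity issue. Both derivations are correct and give the same constant.
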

\begin{proof}
	We begin by observing for $x \in \R^d$, $t > 0$, that
	\begin{align*}
		\eta(x) > t \quad
		&\LRA \quad 1 + \frac{1}{m} \norm{x}^2 < t^{-2/(d+m)} \\
		&\LRA \quad \norm{x} < \sqrt{ m \, t^{-2/(d+m)} - m} .
	\end{align*}
	Thus the level set at threshold $t$ is 
	\begin{equation*}
		\{x \in \R^d \mid \eta(x) > t\}
		= \Bf(\sqrt{ m \, t^{-2/(d+m)} - m}) ,
	\end{equation*}
	where $\Bf(\kappa)$ denotes the open zero-centered Euclidean ball of radius $\kappa$, for any $\kappa > 0$. Note that for USS the $X$-update consists of uniform sampling from the slice. Moreover, it is well-known (and also a straightforward consequence of Corollary \ref{Cor:sampling_in_polar_coords} and inversion method) that if $\kappa > 0$, $U \sim \U(\!\coint{0}{1})$ and $\Theta \sim \U(\sph^{d-1})$, then $U^{1/d} \kappa \Theta \sim \U(\Bf(\kappa))$. We make use of this in the construction of our coupling.
	
	Denote by $P$ the transition kernel of USS for $\nu$. For any $x, y \in \R^d$, we intuitively couple $P(x,\cdot)$ and $P(y,\cdot)$ as follows:
	\begin{enumerate}
		\item Sample $U_1 \sim \U(\ooint{0}{1})$, call the result $u_1$ and set 
		\begin{align*}
			&t_x := u_1 \eta(x) = u_1 \left(1 + \frac{1}{m} \norm{x}^2\right)^{-(d+m)/2} , \\
			&t_y := u_1 \eta(y) = u_1 \left(1 + \frac{1}{m} \norm{y}^2\right)^{-(d+m)/2} .
		\end{align*}
		\item Sample $U_2 \sim \U(\coint{0}{1})$, call the result $u_2$ and set 
		\begin{align*}
			r_x(u_1,u_2) 
			&:= u_2^{1/d} \sqrt{m t_x^{-2/(d+m)} - m}
			= u_2^{1/d} \sqrt{m u_1^{-2/(d+m)} \left( 1 + \tfrac{1}{m} \norm{x}^2 \right) - m} \\
			&= u_2^{1/d} \sqrt{\left(u_1^{-1/(d+m)} \norm{x} \right)^2 + m u_1^{-2/(d+m)} - m} , \\
			r_y(u_1,u_2) 
			&:= u_2^{1/d} \sqrt{\left(u_1^{-1/(d+m)} \norm{y} \right)^2 + m u_1^{-2/(d+m)} - m} .
		\end{align*}
		\item Sample $\Theta \sim \U(\sph^{d-1})$, call the result $\theta$ and set $\tilde{x} := r_x(u_1,u_2) \theta$ and $\tilde{y} := r_y(u_1,u_2) \theta$.
		\item Return $\tilde{x}$ and $\tilde{y}$ as the new states
	\end{enumerate}
	This corresponds to the coupling $\gamma_{x,y}$ determined by
	\begin{equation}
		\gamma_{x,y}(A \times B) 
		= \omega_d^{-1} \int_0^1 \int_0^1 \int_{\sph^{d-1}}
		\ind_A(r_x(u_1,u_2) \theta) \ind_B(r_y(u_1,u_2) \theta) \sigma_d(\d \theta) \d u_2 \d u_1
		\label{Eq:coupling_heavy-tailed}
	\end{equation}
	for $A, B \in \B(\R^d)$. Hence we obtain
	\begin{align*}
		\W(P(x,\cdot), P(y,\cdot))
		&\leq \int_{\R^d \times \R^d} \norm{\tilde{x} - \tilde{y}} \gamma_{x,y}(\d \tilde{x} \times \d \tilde{y}) \\ 
		&= \omega_d^{-1} \int_0^1 \int_0^1 \int_{\sph^{d-1}}
		\norm{r_x(u_1,u_2) \theta - r_y(u_1,u_2) \theta} \sigma_d(\d \theta) \d u_2 \d u_1 \\ 
		&= \int_0^1 \int_0^1 \abs{r_x(u_1,u_2) - r_y(u_1,u_2)} \d u_2 \d u_1 \\ 
		&= \int_0^1 u_2^{1/d} \d u_2 \cdot \int_0^1 \bigg\vert \sqrt{\left(u_1^{-1/(d+m)} \norm{x}\right)^2 + m u_1^{-2/(d+m)} - m} \\
		&\qquad\qquad\qquad\qquad\quad - \sqrt{\left(u_1^{-1/(d+m)} \norm{y}\right)^2 + m u_1^{-2/(d+m)} - m} \; \bigg\vert \d u_1 \\ 
		&\leq \int_0^1 u_2^{1/d} \d u_2 \cdot \int_0^1 \abs{ u_1^{-1/(d+m)} \norm{x} - u_1^{-1/(d+m)} \norm{y} } \d u_1 \\ 
		&= \frac{d}{d+1} \cdot \frac{d+m}{d+m-1} \cdot \abs{\norm{x} - \norm{y}} . 
	\end{align*}
	Note that, somewhat analogous to the proof of Theorem \ref{Thm:contraction}, the second inequality follows by Lemma \ref{Lem:non_expansion_prop}, here applied with $\phi(s) := s^2$, and monotonicity of the Lebesgue integral. The equality in the last line follows by \eqref{Eq:u_integral_formula}, applied with $p = d$ and $p = -(d+m)$. By the same arguments as in Theorem \ref{Thm:contraction}, the above estimation yields our claim.
\end{proof}

To demonstrate the sensitivity of our estimations in Theorem \ref{Thm:USS_contraction_std_multiv_t}, we complement them by the following sharpness result.

\begin{proposition} \label{Prop:USS_contraction_sharp}
	The contraction rate estimate provided by Theorem \ref{Thm:USS_contraction_std_multiv_t} is sharp. That is, for any $m > 1$ the transition kernel $P$ of USS for \eqref{Eq:std_multiv_t} satisfies
	\begin{equation}
		\Dob(P)
		= \frac{d(d+m)}{(d+1)(d+m-1)}_{\textbf{.}}
		\label{Eq:Dob_USS_std_multiv_t}
	\end{equation}
\end{proposition}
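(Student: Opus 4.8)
The plan is to retain the upper bound $\Dob(P)\le \frac{d(d+m)}{(d+1)(d+m-1)}$ supplied by Theorem \ref{Thm:USS_contraction_std_multiv_t} and to establish a matching lower bound by exhibiting pairs of points on a common ray whose normalized Wasserstein transport cost approaches this value. Since $\W$ admits the Kantorovich--Rubinstein dual representation, for every $1$-Lipschitz $f\colon \R^d \ra \R$ one has $\W(P(x,\cdot),P(y,\cdot)) \ge \abs{\int f\,\d P(x,\cdot) - \int f\,\d P(y,\cdot)}$; I would use $f = \norm{\cdot}$, which reduces the task to understanding the expected norm after one step, $m_1(x) := \int_{\R^d} \norm{z}\, P(x,\d z)$.

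Next I would compute $m_1$ explicitly. For USS the $X$-update at threshold $t$ is uniform on $\Bf(\sqrt{m\,t^{-2/(d+m)}-m})$, and the norm of a uniform draw from $\Bf(\kappa)$ has mean $\frac{d}{d+1}\kappa$. Averaging over $T_1 = U_1 \eta(x)$ with $U_1 \sim \U(\ooint{0}{1})$ and using $\eta(x)^{-2/(d+m)} = 1 + \frac1m \norm{x}^2$ gives $m_1(x) = \frac{d}{d+1}\, g(\norm{x})$, where
\[
	g(s) := \int_0^1 \sqrt{u^{-2/(d+m)} s^2 + m\big(u^{-2/(d+m)} - 1\big)}\,\d u , \qquad s \ge 0 .
\]
(In fact the coupling $\gamma_{x,y}$ of \eqref{Eq:coupling_heavy-tailed} transports mass only radially, so along a common ray its cost is $\abs{m_1(x)-m_1(y)}$ exactly, whence the dual bound is attained and $\W(P(x,\cdot),P(y,\cdot)) = \frac{d}{d+1}\abs{g(\norm{x}) - g(\norm{y})}$ there; but only the inequality is needed below.) Fixing a unit vector $\theta$ and $h > 0$ and taking $x = (s+h)\theta$, $y = s\theta$, one has $\norm{x-y} = h$, hence
\[
	\frac{\W(P(x,\cdot),P(y,\cdot))}{\norm{x-y}} \ \ge\ \frac{d}{d+1}\cdot\frac{g(s+h)-g(s)}{h} .
\]

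It then remains to show $\big(g(s+h)-g(s)\big)/h \to \frac{d+m}{d+m-1}$ as $s\to\infty$. Writing the quotient under the integral sign and rationalizing, its integrand equals $\frac{u^{-2/(d+m)}(2s+h)}{\sqrt{u^{-2/(d+m)}(s+h)^2 + c(u)} + \sqrt{u^{-2/(d+m)}s^2 + c(u)}}$ with $c(u) := m(u^{-2/(d+m)}-1) \ge 0$; bounding the two square roots below by $u^{-1/(d+m)}(s+h)$ and $u^{-1/(d+m)}s$ respectively shows that it is dominated, uniformly in $s \ge 0$, by $u^{-1/(d+m)}$, which is integrable on $\ooint{0}{1}$ precisely because $m > 1$ forces $1/(d+m) < 1$, with $\int_0^1 u^{-1/(d+m)}\,\d u = \frac{d+m}{d+m-1}$. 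Since the integrand converges pointwise to $u^{-1/(d+m)}$ as $s\to\infty$, dominated convergence yields the claimed limit. Hence $\Dob(P) \ge \sup_{s>0} \frac{d}{d+1}\cdot\frac{g(s+h)-g(s)}{h} \ge \frac{d}{d+1}\cdot\frac{d+m}{d+m-1}$, which together with Theorem \ref{Thm:USS_contraction_std_multiv_t} proves \eqref{Eq:Dob_USS_std_multiv_t}. The only genuinely delicate point is the uniform-in-$s$ domination underlying this limit; everything else is bookkeeping.
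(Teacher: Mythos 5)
Your argument is correct and follows essentially the same route as the paper: both invoke the Kantorovich--Rubinstein dual with $f=\norm{\cdot}$, reduce to the expected post-step norm $\frac{d}{d+1}g(\norm{x})$, take two points on a common ray escaping to infinity, and pass to the limit under the integral (the paper fixes $y=\tfrac12 x$ and appeals to monotone convergence, whereas you fix the gap $h$ and use a rationalization plus dominated convergence -- a cosmetic difference). One small slip: the finiteness of $\int_0^1 u^{-1/(d+m)}\,\d u$ holds whenever $d+m>1$, which is automatic for $d\ge 1$ and any $m>0$; the hypothesis $m>1$ is what makes the resulting Dobrushin coefficient strictly less than $1$, not what makes the integral converge.
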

\begin{proof}
	Theorem \ref{Thm:USS_contraction_std_multiv_t} shows \eqref{Eq:Dob_USS_std_multiv_t} with ``$\leq$'' in place of equality. Thus it only remains to show ``$\geq$''. For this we make use of the Kantorovich-Rubinstein theorem \cite[Section 1.2]{Villani}, which states that
	\begin{equation*}
		\W(P(x,\cdot), P(y,\cdot))
		= \sup_{\norm{g}_{\text{Lip}} \leq 1} \abs{ \int_{\R^d} g(z) P(x,\d z) - \int_{\R^d} g(z) P(y,\d z) } ,
	\end{equation*}
	where
	\begin{equation*}
		\norm{g}_{\text{Lip}} := \sup_{z_1 \neq z_2 \in \R^d} \frac{\abs{g(z_1) - g(z_2)}}{\norm{z_1 - z_2}}_{\textbf{.}}
	\end{equation*}
	Note that, by triangle inequality of $\norm{\cdot}$, for $g := \norm{\cdot}$ one has $\norm{g}_{\text{Lip}} = 1$.
	
	The analysis in the proof of Theorem \ref{Thm:USS_contraction_std_multiv_t} shows
	\begin{equation*}
		P(x,A)
		= \omega_d^{-1} \int_0^1 \int_0^1 \int_{\sph^{d-1}} \ind_A(r_x(u_1,u_2) \theta) \sigma_d(\d \theta) \d u_2 \d u_1 ,
	\end{equation*}
	so that
	\begin{align*}
		\int_{\R^d} \norm{z} P(x, \d z) 
		&= \omega_d^{-1} \int_0^1 \int_0^1 \int_{\sph^{d-1}} \norm{r_x(u_1,u_2) \theta} \sigma_d(\d \theta) \d u_2 \d u_1 \\
		&= \int_0^1 \int_0^1 r_x(u_1,u_2) \d u_2 \d u_1 \\
		&= \frac{d}{d+1} \int_0^1 \sqrt{\left( u_1^{-1/(d+m)} \norm{x}\right)^2 + m u_1^{-2/(d+m)} - m} \; \d u_1 .
	\end{align*}
	We now consider the points $x = r \theta_0$, $y = \tfrac{1}{2} r \theta_0$ for varying $r \in \R_+$ and arbitrary but fixed $\theta_0 \in \sph^{d-1}$. By the Kantorovich-Rubinstein theorem, we obtain
	\begin{align*}
		\Dob(P) 
		&\geq \sup_{x \neq y} \frac{\abs{\int_{\R^d} \norm{z} P(x,\d z) - \int_{\R^d} \norm{z} P(y,\d z)}}{\norm{x - y}} \\ 
		&\geq \lim_{r \ra \infty} \frac{\abs{\int_{\R^d} \norm{z} P(r \theta_0,\d z) - \int_{\R^d} \norm{z} P(\tfrac{1}{2} r \theta_0,\d z)}}{\norm{r \theta_0 - \tfrac{1}{2} r \theta_0}} \\ 
		&= \frac{2 d}{d+1} \lim_{r \ra \infty} \bigg\vert \int_0^1 \sqrt{\left( u_1^{-1/(d+m)} \right)^2 + \frac{m u_1^{-2/(d+m)} - m}{r^2}} \d u_1 \\
		&\qquad\qquad\qquad\qquad - \int_0^1 \sqrt{\left( \frac{1}{2} u_1^{-1/(d+m)} \right)^2 + \frac{m u_1^{-2/(d+m)} - m}{r^2}} \; \d u_1 \bigg\vert \\
		&= \frac{2 d}{d+1} \abs{ \int_0^1 \left( u_1^{-1/(d+m)} - \frac{1}{2} u_1^{-1/(d+m)} \right) \d u_1 } 
		= \frac{d}{d+1} \int_0^1 u_1^{-1/(d+m)} \d u_1 \\
		&= \frac{d}{d+1} \cdot \frac{d+m}{d+m-1} ,
	\end{align*}
	which is precisely what we set out to prove. Note that the limes can be pulled into the integrals by the monotone convergence theorem.
\end{proof}

Unfortunately, for $k$-PSS besides USS, so in particular for PSS, the density factors $\eta_{k,1}$ for $\eta$ as in \eqref{Eq:std_multiv_t} have intractable level sets, which prevents us from extending the analysis of Theorem \ref{Thm:USS_contraction_std_multiv_t} to these cases. However, we may instead consider a class of distributions that have a simpler structure, while possessing tails asymptotically equivalent to those of the standard multivariate $t$-distributions \eqref{Eq:std_multiv_t}. Namely, we consider the target densities \eqref{Eq:heavy-tailed_simple}, which also have a parameter $m > 0$ that serves the same role as in \eqref{Eq:std_multiv_t}. We obtain the following result.

\begin{theorem} \label{Thm:contraction_heavy-tailed}
	For an $\epsi > 0$ let $\nu \in \Pc(\R^d \setminus \Bf(\epsi)) \subset \Pc(\R^d)$ be a distribution with density $\eta$ given by
	\begin{equation}
		\eta(x)
		= \norm{x}^{-(d+m)} \I(\norm{x} \geq \epsi)
		\label{Eq:heavy-tailed_simple}
	\end{equation}
	for some $m > 1$. Then $k$-PSS for $\nu$ is Wasserstein contractive with rate
	\begin{equation*}
		\rho = \frac{k (k+m)}{(k+1) (k+m-1)}
	\end{equation*}
	for any $k \geq 1$.
\end{theorem}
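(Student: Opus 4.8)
The plan is to mimic the coupling construction from the proof of Theorem~\ref{Thm:contraction} as closely as possible, using the explicit structure of the target \eqref{Eq:heavy-tailed_simple} for $k$-PSS. First I would compute the likelihood factor $\eta_{k,1}(x) = \norm{x}^{d-k} \eta(x) = \norm{x}^{-(k+m)} \I(\norm{x} \geq \epsi)$, which is rotationally invariant and \emph{decreasing} in $\norm{x}$. The crucial difference from Theorem~\ref{Thm:contraction} is that here the super-level sets are \emph{annuli} rather than balls: for a threshold $t$, one has $\eta_{k,1}(x) > t \LRA \epsi \leq \norm{x} < \min\{t^{-1/(k+m)}, \infty\}$, so the slice is $\{x : \norm{x} \in [\epsi, t^{-1/(k+m)})\}$ whenever $t < \epsi^{-(k+m)}$. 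I would then work out the $X$-update density on the slice: using $\eta_{k,0}(x) = \norm{x}^{k-d}$ together with Corollary~\ref{Cor:sampling_in_polar_coords}, the radial density becomes proportional to $r^{k-1}$ on $[\epsi, t^{-1/(k+m)})$, exactly as in Theorem~\ref{Thm:contraction} but on a shifted interval.

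Next I would set up the inversion-method coupling. The radial c.d.f.\ on $[\epsi, c_t)$ with $c_t := t^{-1/(k+m)}$ is $F_t(s) = (s^k - \epsi^k)/(c_t^k - \epsi^k)$, with inverse $F_t^{-1}(u) = (\epsi^k + u(c_t^k - \epsi^k))^{1/k}$. As before, I would couple by sampling $U_1 \sim \U(\ooint{0}{1})$ (setting $t_x := U_1 \eta_{k,1}(x)$, $t_y := U_1 \eta_{k,1}(y)$), then $U_2 \sim \U(\ooint{0}{1})$ fed through the two inverse c.d.f.s, then a common direction $\Theta \sim \U(\sph^{d-1})$. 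This yields, after integrating out $\theta$ exactly as in Theorem~\ref{Thm:contraction},
\[
	\W(P(x,\cdot),P(y,\cdot)) \leq \int_0^1\!\!\int_0^1 \abs{ F_{t_x}^{-1}(u_2) - F_{t_y}^{-1}(u_2) } \,\d u_2\,\d u_1 .
\]
Substituting $c_{t_x} = (U_1 \eta_{k,1}(x))^{-1/(k+m)} = U_1^{-1/(k+m)} \norm{x}$ (and similarly for $y$), and likewise handling the $\epsi^k$ offset — note that since $\norm{x},\norm{y}\geq\epsi$ and we only need thresholds with $c_t > \epsi$, the map $u \mapsto (\epsi^k + u(\,\cdot\,^k - \epsi^k))^{1/k}$ is needed — the inner absolute value is bounded above by $\abs{c_{t_x} - c_{t_y}} u_2^{1/k} = U_1^{-1/(k+m)} \abs{\,\norm{x}-\norm{y}\,}\, u_2^{1/k}$. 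For this last bound I would invoke Lemma~\ref{Lem:non_expansion_prop} (a non-expansion property for $u \mapsto (a + u(b^k-a^k))^{1/k}$ type maps, in the spirit of how it was applied with $\phi(s)=s^2$ in Theorem~\ref{Thm:USS_contraction_std_multiv_t}), or verify directly that $b \mapsto (\epsi^k + u(b^k - \epsi^k))^{1/k}$ is $1$-Lipschitz in $b$ on $[\epsi,\infty)$ for $u \in [0,1]$ and $k \geq 1$. This is where the hypothesis $k \geq 1$ enters.

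Finally I would compute the two elementary integrals: $\int_0^1 u_2^{1/k}\,\d u_2 = k/(k+1)$ and $\int_0^1 U_1^{-1/(k+m)}\,\d U_1 = (k+m)/(k+m-1)$ (using \eqref{Eq:u_integral_formula} with $p=k$ and $p=-(k+m)$; convergence needs $m > 1$), giving
\[
	\W(P(x,\cdot),P(y,\cdot)) \leq \frac{k(k+m)}{(k+1)(k+m-1)}\,\abs{\,\norm{x}-\norm{y}\,} \leq \frac{k(k+m)}{(k+1)(k+m-1)}\,\norm{x-y},
\]
and taking the supremum over $x \neq y$ bounds $\Dob(P)$ as claimed. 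The main obstacle is the Lipschitz estimate for the inverse-c.d.f.\ difference: unlike Theorem~\ref{Thm:contraction}, where $F_t^{-1}(u) = c_t u^{1/k}$ factored cleanly, here the additive $\epsi^k$ term inside the $k$-th root couples the offset and the scale, so one must carefully check that it only helps (i.e.\ that the relevant map remains $1$-Lipschitz in $\norm{x}$ for $k \geq 1$) — and it is precisely at this point that the restriction $k \geq 1$, rather than $k > 0$, becomes necessary, since for $k < 1$ the radial density $r^{k-1}$ on the annulus produces a map that is not a non-expansion.
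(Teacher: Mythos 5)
Your proposal is correct and mirrors the paper's proof essentially step for step: same annular slice computation, same inversion-method coupling, same rewriting of the inverse c.d.f.\ via $h(r)=r^k$, same appeal to Lemma~\ref{Lem:non_expansion_prop} with $\phi=h$ (convex precisely because $k\geq 1$), and the same two elementary integrals via~\eqref{Eq:u_integral_formula}. The only small imprecision is in your ``verify directly'' alternative: the map $b\mapsto(\epsi^k+u(b^k-\epsi^k))^{1/k}$ is in fact $u^{1/k}$-Lipschitz rather than merely $1$-Lipschitz, and that sharper constant is what produces the factor $k/(k+1)$ in the final rate --- the lemma route, applied to the rescaled arguments $u_2^{1/k}c_{t_x}$ and $u_2^{1/k}c_{t_y}$ as the paper does, yields this automatically.
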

\begin{proof}
	Arbitrarily fix $k \geq 1$. Note that for any $r \geq \epsi$ and $\theta \in \sph^{d-1}$, the distribution-dependent factor of the $k$-PSS factorization becomes $\eta_{k,1}(r \theta) = r^{-(k+m)}$. Thus, for any $\theta \in \sph^{d-1}$ and $t \in \ooint{0}{\epsi^{-(k+m)}}$, we get
	\begin{equation*}
		\{r \in \R_+ \mid \eta_{k,1}(r \theta) > t \}
		= \coint{\epsi}{t^{-1/(k+m)}} .
	\end{equation*}
	As in the proof of Theorem \ref{Thm:contraction}, we decompose $k$-PSS into polar coordinates and couple the sampling of the radii via inversion method. To this end, we note that by Corollary \ref{Cor:sampling_in_polar_coords}, given threshold $t$, radius and direction  need to be drawn from the joint distribution with density
	\begin{equation*}
		(r, \theta) 
		\mapsto r^{k-1} \I(\eta_{k,1}(r \theta) > t) \ind_{\R_+}(r)
		= r^{k-1} \ind_{\coint{\epsi}{t^{-1/(k+m)}}}(r) .
	\end{equation*}
	This corresponds to the c.d.f.~$F_t$ that for $s \in \ccint{\epsi}{t^{-1/(k+m)}}$ is given by
	\begin{equation*}
		F_t(s)
		:= \frac{\int_{\epsi}^s r^{k-1} \d r}{\int_{\epsi}^{t^{-1/(k+m)}} r^{k-1} \d r}
		= \frac{\left[ \frac{1}{k} r^k \right]_{\epsi}^s}{\left[ \frac{1}{k} r^k \right]_{\epsi}^{t^{-1/(k+m)}}}
		= \frac{s^k - \epsi^k}{t^{-k/(k+m)} - \epsi^k}_{\textbf{.}}
	\end{equation*}
	Observe that $F_t$ bijectively maps $\ccint{\epsi}{t^{-1/(k+m)}}$ onto $\ccint{0}{1}$ and that the inverse $F_t^{-1}$ of this restriction is given by
	\begin{equation*}
		F_t^{-1}(u)
		= ( (t^{-k/(k+m)} - \epsi^k) u + \epsi^k)^{1/k}
		= ( u \, t^{-k/(k+m)} + (1 - u) \epsi^k )^{1/k} ,
		\quad u \in \ccint{0}{1} .
	\end{equation*}
	By introducing the trivially invertible auxiliary function $h: \R_+ \ra \R_+, \; r \mapsto r^k$, we may alternatively write
	\begin{equation*}
		F_t^{-1}(u)
		= h^{-1}(h(u^{1/k} t^{-1/(k+m)}) + (1 - u) \epsi^k) .
	\end{equation*}
	
	Denote by $P$ the transition kernel of $k$-PSS for $\nu$. For any $x, y \in \R^d \setminus \Bf(\epsi)$, we intuitively construct our coupling of $P(x,\cdot)$ and $P(y,\cdot)$ as follows:
	\begin{enumerate}
		\item Sample $U_1 \sim \U(\ooint{0}{1})$, call the result $u_1$ and set 
		\begin{equation*}
			t_x := u_1 \eta_{k,1}(x) = u_1 \norm{x}^{-(k+m)}
			\quad \text{and} \quad 
			t_y := u_1 \eta_{k,1}(y) = u_1 \norm{y}^{-(k+m)} .
		\end{equation*}
		\item Sample $U_2 \sim \U(\coint{0}{1})$, call the result $u_2$ and set 
		\begin{align*}
			&r_x(u_1,u_2) := F_{t_x}^{-1}(u_2) 
			= h^{-1}(h(u_2^{1/k} u_1^{-1/(k+m)} \norm{x}) + (1 - u_2) \epsi^k) , \\
			&r_y(u_1,u_2) := F_{t_y}^{-1}(u_2) 
			= h^{-1}(h(u_2^{1/k} u_1^{-1/(k+m)} \norm{y}) + (1 - u_2) \epsi^k) .
		\end{align*}
		\item Sample $\Theta \sim \U(\sph^{d-1})$, call the result $\theta$ and set $\tilde{x} := r_x(u_1,u_2) \theta$ and $\tilde{y} := r_y(u_1,u_2) \theta$.
		\item Return $\tilde{x}$ and $\tilde{y}$ as the new states
	\end{enumerate}
	
	The resulting coupling again takes the shape \eqref{Eq:coupling_heavy-tailed}. Thus, omitting some steps that coincide with ones in Theorem \ref{Thm:USS_contraction_std_multiv_t}, we get
	\begin{align*}
		\W(P(x,\cdot), P(y,\cdot))
		&\leq \int_0^1 \int_0^1 \abs{r_x(u_1,u_2) - r_y(u_1,u_2)} \d u_2 \d u_1 \\ 
		&= \int_0^1 \int_0^1 \vert h^{-1}(h(u_2^{1/k} u_1^{-1/(k+m)} \norm{x}) + (1 - u_2) \epsi^k) \\
		&\qquad\qquad\quad - h^{-1}(h(u_2^{1/k} u_1^{-1/(k+m)} \norm{y}) + (1 - u_2) \epsi^k) \vert \d u_2 \d u_1 \\ 
		&\leq \int_0^1 \int_0^1 \abs{ u_2^{1/k} u_1^{-1/(k+m)} \norm{x} - u_2^{1/k} u_1^{-1/(k+m)} \norm{x} } \d u_2 \d u_1 \\ 
		&= \int_0^1 u_2^{1/k} \d u_2 \cdot \int_0^1 u_1^{-1/(k+m)} \d u_1 \cdot \abs{\norm{x} - \norm{y}} \\ 
		&= \frac{k}{k+1} \cdot \frac{k+m}{k+m-1} \cdot \abs{\norm{x} - \norm{y}} . 
	\end{align*}
	Note that, as in Theorem \ref{Thm:USS_contraction_std_multiv_t}, the second inequality follows by Lemma \ref{Lem:non_expansion_prop}, here applied with $\phi := h$, which is convex thanks to our assumption that $k \geq 1$. The final equality again follows by \eqref{Eq:u_integral_formula}, here applied with $p = k$ and $p = -(k+m)$. By the same arguments as in the proof of Theorem \ref{Thm:contraction}, the above estimation yields our claim.
\end{proof}

Analogous to what we did for Theorem \ref{Thm:USS_contraction_std_multiv_t} with Proposition \ref{Prop:USS_contraction_sharp}, we can complement the estimates of Theorem \ref{Thm:contraction_heavy-tailed} by a sharpness guarantee.

\begin{proposition} \label{Prop:contraction_heavy-tailed_sharp}
	The contraction rate estimate provided by Theorem \ref{Thm:contraction_heavy-tailed} is sharp. That is, for any $m > 1$ and $k \geq 1$, the transition kernel $P$ of $k$-PSS for \eqref{Eq:heavy-tailed_simple} satisfies
	\begin{equation*}
		\Dob(P)
		= \frac{k (k+m)}{(k+1) (k+m-1)} .
	\end{equation*}
\end{proposition}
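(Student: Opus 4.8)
The plan is to mirror the proof of Proposition~\ref{Prop:USS_contraction_sharp}. Theorem~\ref{Thm:contraction_heavy-tailed} already supplies ``$\leq$'' in the claimed identity, so it remains only to establish $\Dob(P) \geq \frac{k(k+m)}{(k+1)(k+m-1)}$. For the lower bound I would invoke the Kantorovich--Rubinstein duality, exactly as in Proposition~\ref{Prop:USS_contraction_sharp}, and test with the $1$-Lipschitz function $g = \norm{\cdot}$; this reduces the task to lower-bounding the quotient $\abs{\int_{\R^d}\norm{z}\,P(x,\d z) - \int_{\R^d}\norm{z}\,P(y,\d z)}/\norm{x-y}$ over a conveniently chosen family of pairs $x,y \in \R^d \setminus \Bf(\epsi)$.

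From the coupling $\gamma_{x,y}$ of the form \eqref{Eq:coupling_heavy-tailed} constructed in the proof of Theorem~\ref{Thm:contraction_heavy-tailed}, reading off the $x$-marginal and using that the direction is uniform on $\sph^{d-1}$ and independent of the radius, one obtains, for $x \in \R^d \setminus \Bf(\epsi)$,
\begin{equation*}
	\int_{\R^d} \norm{z}\,P(x,\d z)
	= \int_0^1 \int_0^1 r_x(u_1,u_2)\,\d u_2\,\d u_1
	= \int_0^1 \int_0^1 \big( u_2\, u_1^{-k/(k+m)} \norm{x}^k + (1-u_2)\epsi^k \big)^{1/k} \d u_2\,\d u_1 .
\end{equation*}
I would then take $x = r\theta_0$ and $y = \tfrac12 r\theta_0$ for a fixed $\theta_0 \in \sph^{d-1}$ and $r \geq 2\epsi$, so that both points lie outside $\Bf(\epsi)$, and let $r \to \infty$. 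Dividing by $\norm{x-y} = \tfrac12 r$ and pulling a factor $r$ out of every radius turns the quotient into
\begin{equation*}
	2 \int_0^1 \int_0^1 \Big[ \big( u_2\, u_1^{-k/(k+m)} + (1-u_2)\tfrac{\epsi^k}{r^k} \big)^{1/k} - \big( (\tfrac12)^k u_2\, u_1^{-k/(k+m)} + (1-u_2)\tfrac{\epsi^k}{r^k} \big)^{1/k} \Big] \d u_2\,\d u_1 .
\end{equation*}
Letting $r \to \infty$ kills the $\epsi^k/r^k$ terms, and, once the interchange of limit and integral is justified, the limiting value is $2\int_0^1\int_0^1 u_2^{1/k} u_1^{-1/(k+m)}(1-\tfrac12)\,\d u_2\,\d u_1 = \int_0^1 u_2^{1/k}\,\d u_2 \cdot \int_0^1 u_1^{-1/(k+m)}\,\d u_1$, which by \eqref{Eq:u_integral_formula} (with $p=k$ and with $p=-(k+m)$) equals $\frac{k}{k+1}\cdot\frac{k+m}{k+m-1}$, the claimed constant. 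Combined with Theorem~\ref{Thm:contraction_heavy-tailed} this yields the asserted equality.

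The one point requiring care --- in Proposition~\ref{Prop:USS_contraction_sharp} it was handled by monotone convergence --- is moving the limit inside the double integral. Here I would use dominated convergence: for $r \geq 2\epsi$ one has $(1-u_2)\epsi^k/r^k \leq 1$, so by subadditivity of $t \mapsto t^{1/k}$ (valid since $k \geq 1$) each of the two terms in the bracket is bounded by $u_2^{1/k} u_1^{-1/(k+m)} + 1$, whence the whole integrand is dominated by $4\big(u_2^{1/k} u_1^{-1/(k+m)} + 1\big)$, which is integrable over $[0,1]^2$ because $1/(k+m) < 1$. With this, the interchange is legitimate and the computation above goes through; the remaining manipulations are entirely routine.
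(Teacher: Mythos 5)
Your proposal is correct and follows essentially the same route as the paper: Kantorovich--Rubinstein duality with $g=\norm{\cdot}$, test pairs $x=r\theta_0$, $y=\tfrac12 r\theta_0$, rescaling by $r$, and passing to the limit $r\to\infty$. The only deviation is that you justify the limit interchange by dominated convergence (with a correct dominating function), whereas the paper appeals to monotone convergence as in Proposition~\ref{Prop:USS_contraction_sharp}; both work, since the bracketed integrand is monotone in $r$ (nondecreasing for $k\geq1$, by concavity of $t\mapsto t^{1/k}$).
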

\begin{proof}
	This follows by the same proof strategy as Proposition \ref{Prop:USS_contraction_sharp}, though of course the differing transition kernel causes some of the details to change. For the reader's convenience we provide the part of the estimation affected by this (using notation and auxiliary results from the proof of Theorem \ref{Thm:contraction_heavy-tailed}).
	
	\begin{align*}
		\Dob(P)
		&\geq \lim_{r \ra \infty} \frac{\abs{\int_{\R^d} \norm{z} P(r \theta_0,\d z) - \int_{\R^d} \norm{z} P(\tfrac{1}{2} r \theta_0,\d z)}}{\norm{r \theta_0 - \tfrac{1}{2} r \theta_0}} \\ 
		&= 2 \lim_{r \ra \infty} \frac{1}{r} \bigg\vert \int_0^1 \int_0^1 \big( h^{-1}(h(u_2^{1/k} u_1^{-1/(k+m)} r) + (1 - u_2) \epsi^k) \\
		&\qquad\qquad\qquad\qquad\qquad - h^{-1}(h(\tfrac{1}{2} u_2^{1/k} u_1^{-1/(k+m)} r) + (1 - u_2) \epsi^k) \big) \d u_2 \d u_1 \bigg\vert \\
		&= 2 \lim_{r \ra \infty} \bigg\vert \int_0^1 \int_0^1 \bigg( h^{-1}\left(h(u_2^{1/k} u_1^{-1/(k+m)}) + \frac{(1 - u_2) \epsi^k}{r^k}\right) \\
		&\qquad\qquad\qquad\qquad\quad\; - h^{-1}\left(h(\tfrac{1}{2} u_2^{1/k} u_1^{-1/(k+m)}) + \frac{(1 - u_2) \epsi^k}{r^k}\right) \bigg) \d u_2 \d u_1 \bigg\vert \\
		&= 2 \abs{ \int_0^1 \int_0^1 \big( u_2^{1/k} u_1^{-1/(k+m)} - \tfrac{1}{2} u_2^{1/k} u_1^{-1/(k+m)} \big) \d u_2 \d u_1 } \\
		&= \int_0^1 u_2^{1/k} \d u_2 \cdot \int_0^1 u_1^{-1/(k+m)} \d u_1
		= \frac{k}{k+1} \cdot \frac{k+m}{k+m-1} 
		\qedhere
	\end{align*}
\end{proof}

\begin{discussion} \
	\begin{enumerate}[(a)]
		\item We note that, much like in Theorem \ref{Thm:contraction}, the contraction rate shown in Theorem \ref{Thm:contraction_heavy-tailed} only depends on the sample space dimension $d$ if $k$ depends on it. Consequently the rate obtained for PSS (i.e.~$k=1$), which is
		\begin{equation*}
			\rho = \frac{m+1}{2m}_{\textbf{,}}
		\end{equation*}
		is dimension-independent. We note that it tends to $1/2$ as $m \ra \infty$.
		\item For USS (i.e.~$k=d$) on the other hand, the rate obtained in both Theorems \ref{Thm:USS_contraction_std_multiv_t} and \ref{Thm:contraction_heavy-tailed} is the dimension-dependent
		\begin{equation}
			\rho = \frac{d (d+m)}{(d+1) (d+m-1)}_{\textbf{.}}
			\label{Eq:USS_contraction_rate_ht}
		\end{equation}
		\item That the contraction rates of USS for \eqref{Eq:std_multiv_t} and \eqref{Eq:heavy-tailed_simple} coincide suggests that the relevant properties of the former are retained when approximating it by the latter -- especially because, by the sharpness results of Propositions \ref{Prop:USS_contraction_sharp} and \ref{Prop:contraction_heavy-tailed_sharp}, we do not merely have coinciding \textit{rate estimates}, but actually coinciding \textit{rates}. 
		We also note here that the weight the density \eqref{Eq:heavy-tailed_simple} places on its tails can be freely controlled by the parameter $\epsi > 0$ (which does not affect the contraction rates), so that in particular this tail weight can be matched precisely with that of the corresponding density \eqref{Eq:std_multiv_t}. This further justifies using \eqref{Eq:heavy-tailed_simple} as a proxy for \eqref{Eq:std_multiv_t}.
		\item We note that \eqref{Eq:USS_contraction_rate_ht} tends to $d/(d+1)$ for any fixed $d$ as $m \ra \infty$. This is perhaps not too surprising, since in the same limit the density \eqref{Eq:std_multiv_t} approaches that of the multivariate standard normal distribution, which in turn fits (with $\kappa = \infty$ and $\phi(r) = r^2/2$) into Theorem \ref{Thm:contraction}, restricted to USS, where we also obtain Wasserstein contraction with rate $\rho = d/(d+1)$.
		\item On the other hand, for any fixed $m$, as $d \ra \infty$, the rate \eqref{Eq:USS_contraction_rate_ht} tends to $1$. This is particularly noteworthy to us because we are interested in the behavior of USS in high dimension. We emphasize that \eqref{Eq:USS_contraction_rate_ht} deteriorates fairly quickly with increasing dimension. For example, for target density \eqref{Eq:heavy-tailed_simple} with $m = 2$ in dimension $d = 100$, the contraction rate of PSS is $\rho = 3/4$, whereas that of USS is $\rho \approx 0.9999$.
		\item Our result has the caveat that it does not explain the empirically excellent performance (cf.~\cite[Section 6.1]{GPSS}) of PSS even for the standard multivariate Cauchy distribution, which results from \eqref{Eq:std_multiv_t} with $m = 1$, and thus corresponds in the simplified setting to \eqref{Eq:heavy-tailed_simple} with $m = 1$. We emphasize that the issue here is not an insufficiently detailed analysis: When plugging $m=1$ into the proofs of both Theorem \ref{Thm:contraction_heavy-tailed} and Proposition \ref{Prop:contraction_heavy-tailed_sharp}, they yield $\Dob(P) = 1$, meaning that PSS simply is not Wasserstein contractive for this target.
		
		This discrepancy between empirical and theoretical results gives reason to believe that the rates we obtain w.r.t.~the Wasserstein distance may not reflect those that hold (albeit so far unproven) w.r.t.~the total variation distance (i.e.~corresponding to spectral gaps).
	\end{enumerate}
\end{discussion}

\section{Spectral Gap Estimates} \label{Sec:gap_estimates}

\subsection{General Tool}

In \cite{SlavaUSS}, the authors developed a method to prove quantitative spectral gap estimates for USS. Roughly speaking, their method yields a spectral gap estimate for USS applied to a target density $\varrho$ whenever one is able to verify that the level set function $\ell: \R_{>0} \ra \R_+$ given by
\begin{equation*}
	\ell(t)
	:= \lambda_d(\{x \in \R^d \mid \varrho(x) > t\}) ,
	\qquad t > 0
\end{equation*}
has certain properties. In \cite{PSS_paper} this method was adapted to general slice samplers, using arbitrary factorization $\varrho = \varrho_0 \, \varrho_1$, which necessitated replacing $\ell$ by the \textit{generalized level set function} $\ell_{\varrho_0, \varrho_1}: \R_{>0} \ra \R_+$ given by
\begin{equation}
	\ell_{\varrho_0, \varrho_1}(t)
	:= \int_{\R^d} \varrho_0(x) \I(\varrho_1(x) > t) \d x ,
	\qquad t > 0 .
	\label{Eq:def_ell}
\end{equation}
Using the first of our Wasserstein contraction results, Theorem \ref{Thm:contraction}, we further generalize the method to allow for more fine-grained and in some cases even further-reaching results. For this we require the following auxiliary result from \cite{PSS_paper}.

\begin{lemma} \label{Lem:gap_identity_ell}
	Let $d_1, d_2 \in \N$ and let $\pi \in \Pc(\R^{d_1})$ and $\nu \in \Pc(\R^{d_2})$ be distributions with densities $\varrho$ and $\eta$ arbitrarily factorized as $\varrho = \varrho_0 \, \varrho_1$ and $\eta = \eta_0 \, \eta_1$, like in \eqref{Eq:ss_factorization}. Provided that $\ell_{\varrho_0,\varrho_1} \equiv \ell_{\eta_0,\eta_1}$, i.e.~if $\ell_{\varrho_0,\varrho_1}(t) = \ell_{\eta_0,\eta_1}(t)$ for all $t \in \R_{>0}$, one has
	\begin{equation*}
		\gap_{\pi}(P_{\pi})
		= \gap_{\nu}(P_{\nu}) ,
	\end{equation*}
	where $P_{\pi}$ and $P_{\nu}$ are the transition kernels of slice sampling for $\pi$ and $\nu$, respectively.
\end{lemma}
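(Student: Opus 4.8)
The plan is to show that slice sampling, viewed appropriately, depends on the target density \emph{only} through its generalized level set function, so that whenever $\ell_{\varrho_0,\varrho_1} \equiv \ell_{\eta_0,\eta_1}$ the two chains are ``the same'' in a sense strong enough to force equality of spectral gaps. The natural way to make this precise is to factor each slice sampler through a one-dimensional \emph{shadow chain} on the threshold variable $t$. Concretely, for slice sampling with factorization $\eta = \eta_0 \eta_1$, the sequence of thresholds $(T_n)_{n}$ is itself a Markov chain: from $T_{n-1} = t$ one draws $X$ from $\mu_t$ and then $T_n \sim \U(\,\ooint{0}{\eta_1(X)})$, which yields a threshold kernel $Q$ on $\R_{>0}$ whose law depends on $\eta$ only via $\ell_{\eta_0,\eta_1}$ — indeed, starting from threshold $t$, the next threshold is at most $s$ with probability proportional to integrating $\eta_0(x)\bigl(1 - s/\eta_1(x)\bigr)_+ \I(\eta_1(x) > t)$ over $\R^{d}$, and such integrals are exactly determined by the function $\ell_{\eta_0,\eta_1}$ (this is the ``layered'' representation of the slice sampler already implicit in the analyses of \cite{SlavaUSS} and \cite{PSS_paper}). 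Since the two pairs $(\varrho_0,\varrho_1)$ and $(\eta_0,\eta_1)$ produce identical level set functions, they produce \emph{identical} threshold kernels $Q$ on $\R_{>0}$.

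First I would record the decomposition of the slice sampling kernel $P_\nu$ as an alternation: $P_\nu = U_\nu \, V_\nu$ where $V_\nu$ sends a state $x$ to a threshold $t \sim \U(\,\ooint{0}{\eta_1(x)})$ and $U_\nu$ sends a threshold $t$ to a state $x \sim \mu_t$. The spectral gap of $P_\nu = U_\nu V_\nu$ on $L_2(\nu)$ equals (up to the usual bookkeeping) the spectral gap of the ``reversed'' product $V_\nu U_\nu$ acting on $L_2$ of the stationary law of the threshold chain, because for a product of two Markov operators the nonzero part of the spectrum is shared; more precisely, $\norm{P_\nu - \nu}_{L_2(\nu)\to L_2(\nu)} = \norm{\widetilde{Q} - \widetilde{\pi}_T}$ where $\widetilde{Q} = V_\nu U_\nu$ is the threshold kernel and $\widetilde{\pi}_T$ its invariant distribution, a fact that can be cited from the literature on data-augmentation / two-component Gibbs samplers (e.g.\ the Liu–Wong–Kong correspondence, or \cite[Lemma~?]{PSS_paper} if the excerpt's source provides it). Then I would verify that this threshold kernel $\widetilde{Q}$ and its stationary distribution depend on $(\eta_0,\eta_1)$ purely through $\ell_{\eta_0,\eta_1}$: the invariant density of the threshold chain is proportional to $\ell_{\eta_0,\eta_1}(t)$ on $\R_{>0}$, and the transition law, by the computation sketched above, is an explicit functional of $\ell_{\eta_0,\eta_1}$ alone. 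Applying the same to $(\varrho_0,\varrho_1)$ and using $\ell_{\varrho_0,\varrho_1} \equiv \ell_{\eta_0,\eta_1}$ gives that $\pi$ and $\nu$ induce the \emph{same} threshold kernel and the same stationary law, hence $\gap_\pi(P_\pi) = 1 - \norm{\widetilde{Q} - \widetilde{\pi}_T} = \gap_\nu(P_\nu)$.

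The main obstacle I anticipate is the operator-theoretic bridge between $P_\nu$ on $L_2(\nu)$ and the threshold kernel on $L_2$ of its stationary measure — i.e., making rigorous the claim that the two share the same operator norm deficit from their respective stationary projections. This requires care because $U_\nu$ and $V_\nu$ are not self-adjoint individually; the clean statement is that $U_\nu$ maps $L_2(\pi_T) \to L_2(\nu)$ and $V_\nu$ maps $L_2(\nu) \to L_2(\pi_T)$, are adjoint to each other (since the joint distribution of $(X,T)$ is symmetric under the obvious involution), and therefore $P_\nu = U_\nu V_\nu$ and $\widetilde{Q} = V_\nu U_\nu$ have identical singular values; subtracting off the rank-one stationary parts preserves this. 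This is a standard but slightly delicate argument, and it is presumably exactly the content the authors import from \cite{PSS_paper} — so in the actual write-up I would lean on that reference rather than reprove it, and spend the bulk of the argument on the elementary but essential verification that the threshold chain is a function of $\ell_{\eta_0,\eta_1}$ only. Everything else is routine: checking that $\ell_{\eta_0,\eta_1}$ determines the normalizing constants, the stationary density $t \mapsto \ell_{\eta_0,\eta_1}(t)$ (up to normalization), and the transition probabilities, and then reading off equality of the gaps.
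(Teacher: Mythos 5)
The paper itself does not prove this lemma --- it simply cites \cite[Theorem 3.5]{PSS_paper} --- so there is no in-paper argument to compare against; what you propose is a self-contained proof, and it is essentially the standard route one would take (and almost certainly what the cited reference does as well). Your three key observations are all correct: the slice sampler is a two-component Gibbs chain in the pair $(X,T)$, so the state-to-threshold and threshold-to-state conditional-expectation operators are adjoint contractions $L_2(\nu) \leftrightarrow L_2(\pi_T)$, and the $A^*A$ versus $AA^*$ identity transfers the $L_2$ operator-norm deficit from the $X$-chain to the $T$-chain; the invariant density of the $T$-chain is proportional to $t \mapsto \ell_{\eta_0,\eta_1}(t)$; and the $T$-chain transition law is a functional of $\ell_{\eta_0,\eta_1}$ alone. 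On the last point, the cleanest justification is to note that every integral of the form $\int \eta_0(x)\,h(\eta_1(x))\,\d x$ appearing in the threshold kernel equals $\int_0^\infty h(u)\,\mu(\d u)$, where $\mu$ is the pushforward of $\eta_0\,\d x$ under $\eta_1$, and $\mu$ is determined by $\ell_{\eta_0,\eta_1}$ via $\mu(\ooint{t}{\infty}) = \ell_{\eta_0,\eta_1}(t)$; this makes the ``functional of $\ell$'' claim precise rather than merely plausible. Beyond that, the only nit is notational: with the usual convention that a kernel from $A$ to $B$ acts on functions on $B$ and returns functions on $A$, the slice kernel is $V_\nu U_\nu$ in your labeling rather than $U_\nu V_\nu$; this does not affect the argument since the two products share nonzero spectrum regardless. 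In short, your proposal is correct and fills in exactly the content that the paper outsources to the citation.
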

\begin{proof}
	See \cite[Theorem 3.5]{PSS_paper}.
\end{proof}

Following both \cite{SlavaUSS} and \cite{PSS_paper}, we conveniently package the conditions one needs to verify to obtain a spectral gap estimate by defining classes $\Lambda_k$ of all functions $\ell$ that satisfy them.

\begin{definition} \label{Def:Lambda_k}
	For any fixed $k \in \R_{>0}$, we define $\Lambda_k$ as the class of continuous functions $\ell: \R_{>0} \ra \R_+$ for which all of the following hold:
	\begin{enumerate}[(i)]
		\item $\lim_{t \ra \infty} \ell(t) = 0$ and $\sup \ell := \lim_{t \searrow 0} \ell(t) \in \ocint{0}{\infty}$ \label{Con:Lambda_k_i}
		\item $\ell$ restricted to $\supp(\ell) := \oointv{0}{\sup\{t \in \R_{>0} \mid \ell(t) > 0\}}$ is strictly decreasing \label{Con:Lambda_k_ii}
		\item the function $\ooint{0}{(\sup \ell)^{1/k}} \ra \R_{>0}, \; r \mapsto \ell^{-1}(r^k)$ is log-concave. \label{Con:Lambda_k_iii}
	\end{enumerate}
	Note that for functions $\ell_{\varrho_0,\varrho_1}$ as in \eqref{Eq:def_ell}, it is easy to see that condition (i) is always satisfied, so we ignore it when trying to show statements of the form $\ell_{\varrho_0,\varrho_1} \in \Lambda_k$.
\end{definition}

Following \cite{PSS_paper}, we note that the assumed continuity and conditions \eqref{Con:Lambda_k_i} and \eqref{Con:Lambda_k_ii} altogether imply that $\ell$ restricted to $\supp(\ell)$ maps bijectively onto $\ooint{0}{\sup \ell}$, which in turn implies the existence of the inverse function $\ell^{-1}: \ooint{0}{\sup \ell} \ra \supp(\ell)$ used in condition \eqref{Con:Lambda_k_iii}. It is easy to see that $\ell^{-1}$ is also strictly decreasing.

We are now prepared to state and prove our generalization of the method for establishing quantitative spectral gap estimates.

\begin{theorem} \label{Thm:gap_estimate}
	Let $\pi \in \Pc(\R^d)$ be a distribution with density $\varrho$ factorized as $\varrho = \varrho_0 \, \varrho_1$ like in \eqref{Eq:ss_factorization}. Suppose that $\ell_{\varrho_0,\varrho_1} \in \Lambda_k$ for some $k \in \R_{>0}$, then
	\begin{equation*}
		\gap_{\pi}(P_{\pi})
		\geq \frac{1}{k+1}_,
	\end{equation*}
	where $P_{\pi}$ is the transition kernel of slice sampling for $\pi$.
\end{theorem}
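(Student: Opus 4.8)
The plan is to exploit Lemma \ref{Lem:gap_identity_ell} to transfer the problem to a setting where Theorem \ref{Thm:contraction} already applies. Concretely, given $\pi$ with factorization $\varrho = \varrho_0 \, \varrho_1$ satisfying $\ell := \ell_{\varrho_0,\varrho_1} \in \Lambda_k$, I would construct an auxiliary distribution $\nu$ on $\R^d$ together with a $k$-PSS-type factorization $\eta = \eta_{k,0} \, \eta_{k,1}$ such that $\ell_{\eta_{k,0},\eta_{k,1}} \equiv \ell$. The natural candidate is a rotationally invariant $\nu \in \D_k(\R^d)$: with $\eta(x) = \norm{x}^{k-d} \exp(-\phi(\norm{x})) \ind_{\ooint{0}{\kappa}}(\norm{x})$ one has $\eta_{k,1}(x) = \exp(-\phi(\norm{x})) \ind_{\ooint{0}{\kappa}}(\norm{x})$, and from \eqref{Eq:contraction_X_up_density} together with the polar-coordinate computation in the proof of Theorem \ref{Thm:contraction}, the generalized level set function of this factorization is, for $t \in \ooint{0}{\sup\eta_{k,1}}$,
\begin{equation*}
	\ell_{\eta_{k,0},\eta_{k,1}}(t)
	= \omega_d \int_0^{\widehat\phi^{-1}(-\log t)} r^{k-1} \, \d r
	= \frac{\omega_d}{k} \left( \widehat\phi^{-1}(-\log t) \right)^{k} .
\end{equation*}
So I need to choose $\phi$ (and $\kappa$, and a normalization) so that this equals the prescribed $\ell(t)$; solving for $\widehat\phi^{-1}$ gives $\widehat\phi^{-1}(-\log t) = (k \, \ell(t) / \omega_d)^{1/k}$, i.e.~after the substitution $t = e^{-s}$ and inverting, $\phi(r) = -\log \ell^{-1}\!\left( (\omega_d / k)\, r^k \right)$ up to the scaling bookkeeping, with $\kappa = \sup\supp(\ell)^{\text{-related quantity}}$ determined by $\sup\ell$.

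The key verification is then that this $\phi$ is strictly increasing and convex, so that $\nu$ genuinely lies in $\D_k(\R^d)$. Strict monotonicity is immediate: $\ell$ is strictly decreasing on its support by condition \eqref{Con:Lambda_k_ii}, hence $\ell^{-1}$ is strictly decreasing, hence $r \mapsto \ell^{-1}(c\,r^k)$ is strictly decreasing, hence $\phi = -\log$ of it is strictly increasing. Convexity of $\phi$ is exactly condition \eqref{Con:Lambda_k_iii}: the map $r \mapsto \ell^{-1}(r^k)$ is assumed log-concave, i.e.~$r \mapsto \log \ell^{-1}(r^k)$ is concave, so its negative $r \mapsto -\log \ell^{-1}(r^k)$ is convex; the harmless constant factor $\omega_d/k$ inside can be absorbed by rescaling the variable $r$, which preserves convexity. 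The continuity of $\ell$ and condition \eqref{Con:Lambda_k_i} ensure that the domains match up correctly — $\ell$ restricted to $\supp(\ell)$ is a continuous strictly decreasing bijection onto $\ooint{0}{\sup\ell}$, so $\ell^{-1}$ is well-defined and continuous there, and $\phi$ is defined on the appropriate interval $\ooint{0}{\kappa}$ — and one should check that the $\widehat{\cdot}$-extension used in Theorem \ref{Thm:contraction} (via Lemma \ref{Lem:non_expansion_prop}) is consistent with this setup, which it is since everything only ever gets evaluated on $\supp(\ell)$.

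Once $\nu \in \D_k(\R^d)$ with $\ell_{\eta_{k,0},\eta_{k,1}} \equiv \ell_{\varrho_0,\varrho_1}$ is established, the argument closes quickly: Lemma \ref{Lem:gap_identity_ell} (applied with $d_1 = d_2 = d$) gives $\gap_{\pi}(P_{\pi}) = \gap_{\nu}(P_{\nu})$ where $P_\nu$ is the transition kernel of $k$-PSS for $\nu$; Theorem \ref{Thm:contraction} gives $\Dob(P_\nu) \leq k/(k+1)$; and $k$-PSS is reversible w.r.t.~$\nu$ (a standard property of slice samplers), while $\nu$ has finite second moment because its density decays like $\norm{x}^{k-d}e^{-\phi(\norm{x})}$ with $\phi$ strictly increasing and convex — if $\kappa < \infty$ this is trivial, and if $\kappa = \infty$ then $\phi$ growing at least linearly (being convex and strictly increasing, after possibly discarding a bounded initial piece) gives exponential tail decay, so all moments are finite. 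Hence \eqref{Eq:contraction_to_gap} from \cite{Ricci} applies and yields $\gap_{\nu}(P_\nu) \geq 1 - k/(k+1) = 1/(k+1)$, completing the proof. The main obstacle I anticipate is purely bookkeeping: getting the normalization constants and the exact interval endpoints ($\kappa$ versus $\sup\ell$, and the $\omega_d/k$ factor) to line up so that $\ell_{\eta_{k,0},\eta_{k,1}}$ reproduces $\ell$ \emph{exactly} rather than up to a constant — though since only the \emph{shape} of $\ell$ on its support matters for the gap identity and a multiplicative rescaling of $\ell$ corresponds to an additive shift of $\phi$ (which affects neither monotonicity nor convexity), this is a minor nuisance rather than a genuine difficulty.
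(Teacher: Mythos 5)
Your proposal is correct and follows essentially the same strategy as the paper's proof: construct an auxiliary $\nu \in \D_k$ whose $k$-PSS factorization reproduces $\ell_{\varrho_0,\varrho_1}$, verify $\phi(r) = -\log\ell^{-1}(c\,r^k)$ is strictly increasing and convex via conditions \eqref{Con:Lambda_k_ii} and \eqref{Con:Lambda_k_iii}, then chain Lemma \ref{Lem:gap_identity_ell}, Theorem \ref{Thm:contraction}, and \eqref{Eq:contraction_to_gap}. The only immaterial difference is that you place $\nu$ on $\R^d$ while the paper uses $\R$ (normalizing with $\omega_1 = 2$ rather than $\omega_d$), a choice the paper itself notes is arbitrary; you also supply slightly more detail on the finite-second-moment check and on why the $\widehat{\phi}^{-1}$-extension is moot (the paper pins this down by noting $\sup\phi = \infty$ here), but the substance is the same.
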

\begin{proof}
	Our proof strategy is to construct a distribution $\nu \in \D_k(\R)$ with density $\eta$, factorized according to $k$-PSS, such that $\ell_{\eta_{k,0},\eta_{k,1}} \equiv \ell_{\varrho_0,\varrho_1}$. Afterwards we just need to apply a number of earlier results to obtain the claimed estimate. We note that our proof strategy would also work if we instead constructed a $\nu \in \D_k(\R^m)$ for some $m > 1$ and that we simply chose $m = 1$ for simplicity.

	Fix any $k \in \R_{>0}$ such that $\ell_{\varrho_0,\varrho_1} \in \Lambda_k$, set
	\begin{equation*}
		\kappa
		:= \left( \frac{k}{2} \sup \ell_{\varrho_0,\varrho_1} \right)^{1/k}
		\in \, \ocint{0}{\infty}
	\end{equation*}
	and define $\phi: \ooint{0}{\kappa} \ra \R$ by
	\begin{equation*}
		\phi(r)
		:= -\log\left( \ell_{\varrho_0,\varrho_1}^{-1}\left( \frac{2}{k} r^k \right) \right)_{\textbf{.}}
	\end{equation*}
	Note that since $\phi$ is a composition of the strictly decreasing functions $-\log$ and $\ell_{\varrho_0,\varrho_1}^{-1}$ and the strictly increasing function $r \mapsto \frac{2}{k} r^k$, it is strictly increasing. Furthermore, since $\phi$ can also be viewed as a composition of the (by condition \eqref{Con:Lambda_k_iii} of Definition \ref{Def:Lambda_k}) convex function $r \mapsto -\log \ell_{\varrho_0,\varrho_1}^{-1}(r^k)$ and the linear function $r \mapsto (2/k)^{1/k} \, r$, it is convex as well. Hence it satisfies both assumptions of Definition \ref{Def:D_k_def} and therefore defines a distribution $\nu \in \D_k(\R)$ with density $\eta$ as in \eqref{Eq:D_k_def}.

	It is easy to verify that the inverse\footnote{See Lemma \ref{Lem:phi_inverse} for nomenclature.} $\phi^{-1}: I_{\phi} \ra D_{\phi}$ of $\phi$ is given by
	\begin{equation*}
		\phi^{-1}(s)
		= \left( \frac{k}{2} \ell_{\varrho_0,\varrho_1}(\exp(-s)) \right)^{1/k}_{\textbf{.}}
	\end{equation*}
	Furthermore, one has 
	\begin{equation*}
		\sup \phi
		= \lim_{r \nearrow \kappa} \phi(r)
		= \lim_{r^{\prime} \nearrow \sup \ell_{\varrho_0,\varrho_1}} -\log \ell_{\varrho_0,\varrho_1}^{-1}(r^{\prime}) 
		= \lim_{r^{\prime\prime} \searrow 0} -\log r^{\prime\prime} 
		= \infty ,
	\end{equation*}
	which yields $\widehat{\phi}^{-1} \equiv \phi^{-1}$ (see Lemma \ref{Lem:non_expansion_prop}). Plugging this into \eqref{Eq:L(t)_for_contraction} and the result into the definition of $\ell_{\eta_{k,0},\eta_{k,1}}$, we get
	\begin{align*}
		\ell_{\eta_{k,0},\eta_{k,1}}(t)
		&= \int_{\R} \abs{x}^{k-1} \ind_{\ooint{0}{\phi^{-1}(-\log t)}}(\abs{x}) \d x 
		= 2 \int_0^{\phi^{-1}(-\log t)} r^{k-1} \d r \\ 
		&= 2 \left[ \frac{1}{k} r^k \right]_0^{\phi^{-1}(-\log t)} 
		= \frac{2}{k} \phi^{-1}(-\log t)^k 
		= \ell_{\varrho_0,\varrho_1}(t) 
	\end{align*}
	for any $t \in \supp(\ell_{\eta_{k,0},\eta_{k,1}})$. Note that
	\begin{align*}
		&\sup\{t \in \R_{>0} \mid \ell_{\eta_{k,0},\eta_{k,1}}(t) > 0\}
		= \exp(-\inf \phi) 
		= \lim_{r \searrow 0} \exp(-\phi(r)) \\ 
		&= \lim_{r \searrow 0} \ell_{\varrho_0,\varrho_1}^{-1}\left( \frac{2}{k} r^k\right) 
		= \lim_{r^{\prime} \searrow 0} \ell_{\varrho_0,\varrho_1}^{-1}(r^{\prime}) 
		= \sup\{t \in \R_{>0} \mid \ell_{\varrho_0,\varrho_1}(t) > 0\} 
	\end{align*}
	and therefore $\supp(\ell_{\eta_{k,0},\eta_{k,1}}) = \supp(\ell_{\varrho_0,\varrho_1})$. Of course we also have
	\begin{equation*}
		\ell_{\eta_{k,0},\eta_{k,1}}(t) = 0 = \ell_{\varrho_0,\varrho_1}(t)
	\end{equation*}
	for thresholds $t$ too large to be contained in the supports. Overall, we have now shown $\ell_{\eta_{k,0},\eta_{k,1}} \equiv \ell_{\varrho_0,\varrho_1}$, so that Lemma \ref{Lem:gap_identity_ell} applies.

	Denote by $P_{\nu}$ the transition kernel of $k$-PSS for $\nu$. From Theorem \ref{Thm:contraction} we know that $P_{\nu}$ is Wasserstein contractive with rate $\rho = k/(k+1)$. Moreover, it is easy to see that all distributions in $\D_k(\R^d)$, for any $k > 0$ and $d \in \N$, have finite second moment, so in particular this must be true of $\nu \in \D_k(\R)$. Hence \eqref{Eq:contraction_to_gap} applies and we can conclude the proof by
	\begin{align*}
		\gap_{\pi}(P_{\pi})
		&\stackrel{\ref{Lem:gap_identity_ell}}{=} \gap_{\nu}(P_{\nu})
		\stackrel{\eqref{Eq:contraction_to_gap}}{\geq} \frac{1}{k+1}_{\textbf{.}}
	\end{align*}
\end{proof}

\begin{discussion} \label{Dis:gap_est_thm} \
	\begin{enumerate}[(a)]
		\item We emphasize that Definition \ref{Def:Lambda_k} is virtually identical to \cite[Definition 3.7]{PSS_paper}, which underlies the result we generalize here. The only difference is that the parameter $k$, which was restricted to $\N$ in \cite{PSS_paper}, is now allowed to take any value in $\R_{>0}$. Hence we generalize the method from a discrete sequence of classes $(\Lambda_k)_{k \in \N}$ to a continuous spectrum $(\Lambda_k)_{k \in \R_{>0}}$ that contains the sequence as a subset.
		\item This generalization is made possible by a subtle change in the proof strategy: Where the corresponding proof in \cite{PSS_paper} constructed for each $k \in \N$ a $\nu$ that was defined on $\R^k$ and then used the Wasserstein contraction result of \cite{SlavaUSS} for USS in that dimension, we instead stay in dimension one and use our own contraction result for $k$-PSS. Hence the increased flexibility of Theorem \ref{Thm:gap_estimate} (w.r.t.~the available choices of $k$) over the theorem's counterpart in \cite{PSS_paper} is essentially due to the fact that $k$-PSS is well-defined and proven to be a Wasserstein contraction for any $k \in \R_{>0}$, not just for integer values $k \in \N$.
		\item We will see in the following that our generalization is useful in the sense that we can find for any $k \in \R_{>0}$ a target distribution $\pi$ with density $\varrho$, factorized as in \eqref{Eq:ss_factorization}, satisfying $\ell_{\varrho_0,\varrho_1} \in \Lambda_k$ and $\ell_{\varrho_0,\varrho_1} \not\in \Lambda_{k^{\prime}}$ for all $k^{\prime} < k$.
		
		\item A particularly valuable aspect of the continuous spectrum in our result is that we allow the cases $k \in \ooint{0}{1}$, for which the theorem yields lower bounds on the spectral gap that lie in $\ooint{1/2}{1}$, whereas the result from \cite{SlavaUSS} could at best give a lower bound of $1/2$ (with $k=1$). This is of importance to both of the applications of Theorem \ref{Thm:gap_estimate} that we develop in the subsequent subsections.
	\end{enumerate}
\end{discussion}

In order to ease application of Theorem \ref{Thm:gap_estimate}, we provide in Appendix \ref{App:aux_res} two more results from \cite{PSS_paper} (Lemma \ref{Lem:convex_inverse} and Proposition \ref{Prop:alternative_cond}). In the remainder of this section, we present two non-trivial applications of Theorem \ref{Thm:gap_estimate} to $k$-PSS and a result for USS that follows by an analogous approach.

\subsection{Rotationally Invariant Targets}

Our first application of Theorem \ref{Thm:gap_estimate} concerns $k$-PSS for certain types of rotationally invariant target densities. For target densities $\varrho$ we obtain spectral gap estimates that depend on the degree to which their factor $\varrho_{k,1}$ is log-concave.

\begin{theorem} \label{Thm:k-PSS_rot_inv}
	Let $k \in \R_{>0}$ and let $\pi \in \Pc(\R^d)$ be a distribution with density $\varrho$ given by
	\begin{equation}
		\varrho(r \theta)
		= r^{k-d} \exp(-\phi(r^m)) \ind_{\ooint{0}{\kappa}}(r^m) ,
		\qquad r \in \R_+, \theta \in \sph^{d-1} ,
		\label{Eq:k-PSS_rot_inv}
	\end{equation}
	where $m \in \R_{>0}$, $\kappa \in \ocint{0}{\infty}$ and $\phi: \ooint{0}{\kappa} \ra \R$ is strictly increasing and convex and satisfies $\lim_{r \nearrow \kappa} \phi(r) = \infty$. Then one has
	\begin{equation*}
		\gap_{\pi}(P) \geq \frac{m}{k+m} ,
	\end{equation*}
	where $P$ is the transition kernel of $k$-PSS for $\pi$.
\end{theorem}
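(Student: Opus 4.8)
The plan is to derive the statement from Theorem~\ref{Thm:gap_estimate}, invoked not with the exponent $k$ of the underlying $k$-PSS factorization but with the auxiliary parameter $k' := k/m \in \R_{>0}$. Since $\frac{1}{k'+1} = \frac{m}{k+m}$, it then suffices to show that the generalized level set function $\ell_{\varrho_{k,0},\varrho_{k,1}}$ belonging to the $k$-PSS factorization $\varrho = \varrho_{k,0}\,\varrho_{k,1}$ lies in the class $\Lambda_{k/m}$ of Definition~\ref{Def:Lambda_k}. Everything thus reduces to computing that level set function and then checking conditions \eqref{Con:Lambda_k_ii} and \eqref{Con:Lambda_k_iii} (condition \eqref{Con:Lambda_k_i} being automatic for functions of the form \eqref{Eq:def_ell}).

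First I would record that for $r \in \R_+$, $\theta \in \sph^{d-1}$ the distribution-dependent factor collapses to $\varrho_{k,1}(r\theta) = r^{d-k}\varrho(r\theta) = \exp(-\phi(r^m))\ind_{\ooint{0}{\kappa}}(r^m)$, exactly as \eqref{Eq:D_k_likelihood_factor} does in the proof of Theorem~\ref{Thm:contraction}. Because $\phi$ is strictly increasing and convex with $\lim_{r\nearrow\kappa}\phi(r) = \infty$, its inverse $\phi^{-1}$ is a strictly increasing bijection onto $\ooint{0}{\kappa}$, so that for $t \in \ooint{0}{\exp(-\inf\phi)}$ one has $\varrho_{k,1}(r\theta) > t \Leftrightarrow r^m < \phi^{-1}(-\log t) \Leftrightarrow r < \phi^{-1}(-\log t)^{1/m}$, the constraint $r^m < \kappa$ being redundant for the same reason as in Theorem~\ref{Thm:contraction}. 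Passing to polar coordinates and using $\varrho_{k,0}(r\theta) = r^{k-d}$ then gives, for such $t$,
\[
	\ell_{\varrho_{k,0},\varrho_{k,1}}(t)
	= \omega_d \int_0^{\phi^{-1}(-\log t)^{1/m}} r^{k-1} \, \d r
	= \frac{\omega_d}{k} \, \phi^{-1}(-\log t)^{k/m},
\]
while $\ell_{\varrho_{k,0},\varrho_{k,1}}(t) = 0$ for $t \ge \exp(-\inf\phi)$; in particular $\supp(\ell_{\varrho_{k,0},\varrho_{k,1}}) = \ooint{0}{\exp(-\inf\phi)}$.

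With this closed form, condition \eqref{Con:Lambda_k_ii} and continuity are immediate, since $t \mapsto \phi^{-1}(-\log t)^{k/m}$ is continuous and strictly decreasing on its support and tends to $0$ at the right endpoint. For condition \eqref{Con:Lambda_k_iii} I would invert the level set function: solving $s = \frac{\omega_d}{k}\phi^{-1}(-\log t)^{k/m}$ for $t$ yields $\ell_{\varrho_{k,0},\varrho_{k,1}}^{-1}(s) = \exp(-\phi((\frac{k}{\omega_d} s)^{m/k}))$, hence, with $c := (k/\omega_d)^{m/k}$,
\[
	\ell_{\varrho_{k,0},\varrho_{k,1}}^{-1}(r^{k/m})
	= \exp(-\phi(c \, r)).
\]
The logarithm of the right-hand side is $-\phi$ composed with the linear map $r \mapsto c\,r$, which is concave because $\phi$ is convex; hence $r \mapsto \ell_{\varrho_{k,0},\varrho_{k,1}}^{-1}(r^{k/m})$ is log-concave on $\ooint{0}{(\sup\ell_{\varrho_{k,0},\varrho_{k,1}})^{m/k}}$. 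This establishes $\ell_{\varrho_{k,0},\varrho_{k,1}} \in \Lambda_{k/m}$, so Theorem~\ref{Thm:gap_estimate} gives $\gap_\pi(P) \ge \frac{1}{k/m+1} = \frac{m}{k+m}$.

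The computation above is routine; the one genuinely load-bearing choice is $k' = k/m$, calibrated precisely so that the exponent $\frac{k'm}{k} = 1$ makes the power of $r$ inside $\phi$ collapse to the first power, reducing condition \eqref{Con:Lambda_k_iii} to the assumed convexity of $\phi$. The only points that need a little care — the boundary behaviour of $\ell_{\varrho_{k,0},\varrho_{k,1}}$ and the identification of its support — are handled exactly as in the proof of Theorem~\ref{Thm:gap_estimate}, where, as here, the hypothesis $\lim_{r\nearrow\kappa}\phi(r) = \infty$ is what makes the $r^m < \kappa$ constraint drop out and produces the clean formula.
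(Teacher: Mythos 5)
Your proof is correct and follows essentially the same route as the paper's: compute the generalized level set function $\ell_{\varrho_{k,0},\varrho_{k,1}}(t) = \frac{\omega_d}{k}\,\phi^{-1}(-\log t)^{k/m}$, recognize that it lies in $\Lambda_{k/m}$, and apply Theorem~\ref{Thm:gap_estimate} with parameter $k/m$ to obtain $\gap_{\pi}(P) \geq \frac{1}{k/m+1} = \frac{m}{k+m}$. The only cosmetic difference is in checking condition~\eqref{Con:Lambda_k_iii} of Definition~\ref{Def:Lambda_k}: the paper reduces it via Proposition~\ref{Prop:alternative_cond} to concavity of $s \mapsto \ell(\exp(-s))^{m/k}$ and then invokes Lemma~\ref{Lem:convex_inverse}, whereas you invert $\ell$ in closed form and verify the log-concavity of $r \mapsto \ell^{-1}(r^{k/m}) = \exp(-\phi(c\,r))$ directly from the convexity of $\phi$ — equivalent, and arguably a touch more self-contained.
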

\begin{proof}
	Observe that for $t \in \ooint{0}{\exp(-\inf \phi)}$, $r > 0$ and $\theta \in \sph^{d-1}$ one has
	\begin{align*}
		\varrho_{k,1}(r \theta) > t \quad
		&\LRA \quad \exp(-\phi(r^m)) > t \quad \text{and} \quad r^m < \kappa \\
		&\LRA \quad r^m < \phi^{-1}(-\log t) \quad \text{and} \quad r^m < \kappa \\
		&\LRA \quad r^m < \phi^{-1}(-\log t) \\
		&\LRA \quad r < \phi^{-1}(-\log t)^{1/m} ,
	\end{align*}
	where the second and fourth equivalence rely on $\phi^{-1}$ and $r \mapsto r^{1/m}$ being strictly increasing, and the third equivalence on $\phi^{-1}$ mapping to $\ooint{0}{\kappa}$, s.t.~in particular $\phi^{-1} < \kappa$. Thus, by the polar coordinates formula (Proposition \ref{Prop:polar_coords}), for $t \in \ooint{0}{\exp(-\inf \phi)}$ we get
	\begin{align*}
		\ell_{\varrho_{k,0},\varrho_{k,1}}(t)
		&= \int_{\R^d} \norm{x}^{k-d} \I(\varrho_{k,1}(x) > t) \d x
		= \omega_d \int_0^{\infty} r^{k-1} \I(r < \phi^{-1}(-\log t)^{1/m}) \d r \\
		&= \omega_d \left[ \frac{1}{k} r^k \right]_0^{\phi^{-1}(-\log t)^{1/m}} 
		= \frac{\omega_d}{k} \phi^{-1}(-\log t)^{k/m} .
	\end{align*}
	From this it is easy to see that $\ell_{\varrho_{k,0},\varrho_{k,1}}$ is continuous and strictly decreasing on its support, so only condition \eqref{Con:Lambda_k_iii} of Definition \ref{Def:Lambda_k} remains to be verified in order to enable application of Theorem \ref{Thm:gap_estimate}. To this end, observe that the above computation yields
	\begin{equation*}
		\ell_{\varrho_{k,0},\varrho_{k,1}}(\exp(-s))^{m/k}
		= \left( \frac{\omega_d}{k} \phi^{-1}(s)^{k/m} \right)^{m/k}
		= \left( \frac{\omega_d}{k} \right)^{m/k} \phi^{-1}(s) ,
	\end{equation*}
	which is concave in $s$ by the assumptions on $\phi$ and Lemma \ref{Lem:convex_inverse}. By Proposition \ref{Prop:alternative_cond}, this shows $\ell_{\varrho_{k,0},\varrho_{k,1}}$ to satisfy condition \eqref{Con:Lambda_k_iii} of Definition \ref{Def:Lambda_k} with parameter $k/m$. Therefore Theorem \ref{Thm:gap_estimate} applies and yields
	\begin{equation*}
		\gap_{\pi}(P) 
		\geq \frac{1}{k/m + 1} 
		= \frac{m}{k + m}_{\textbf{.}}
		\qedhere 
	\end{equation*}
\end{proof}

\newpage 
\begin{discussion} \
	\begin{enumerate}[(a)]
		\item We point out that there is a very intuitive relationship between the result of Theorem \ref{Thm:k-PSS_rot_inv} and our Wasserstein contraction result, Theorem \ref{Thm:contraction}: If we set $m=1$ in Theorem \ref{Thm:k-PSS_rot_inv}, we obtain $\pi \in \D_k(\R^d)$, so that Theorem \ref{Thm:contraction} applies and yields that $k$-PSS for $\pi$ is Wasserstein contractive with rate $\rho = k/(k+1)$. By \eqref{Eq:contraction_to_gap}, this in turn yields a spectral gap estimate of $1/(k+1)$, which is the same as that provided by Theorem \ref{Thm:k-PSS_rot_inv} (due to our setting $m=1$). Hence, if one interprets the contraction result purely as a spectral gap estimate, then Theorem \ref{Thm:k-PSS_rot_inv} generalizes it by introducing the parameter $m \in \R_{>0}$.
		\item Furthermore, in the cases $m > 1$, the value of $m$ intuitively provides information about the `degree of log-concavity' of $\varrho_{k,1}$, which controls the target density's rate of decay along rays emanating from the origin. This additional information enables a more sensitive estimation of the spectral gap, leading to a lower bound of $m/(k+m)$, which is strictly larger than the lower bound of $1/(k+1)$ that could be obtained without it.
		\item The cases $m < 1$ complement this by requiring \textit{less} information, as the resulting $\exp$-term in $\varrho$ does no longer need to be log-concave (e.g.~for $\phi(r) = r$, $m = 1/2$ we get $\phi(r^m) = \sqrt{r}$, which is strictly concave rather than convex). Correspondingly, the resulting lower bounds of $m/(k+m)$ are strictly smaller than the lower bound of $1/(k+1)$ one could obtain in the case $m=1$.
		\item Though we are unable to formally show that the spectral gap estimates provided by Theorem \ref{Thm:k-PSS_rot_inv} are sharp, there is substantial numerical evidence suggesting that they are. We refer to Appendix \ref{App:num_supp} for details.
		\item We deem it worthwhile to consider the implications of Theorem \ref{Thm:k-PSS_rot_inv} for the spectral gap of USS, which thus far was most effectively examined in \cite{SlavaUSS}. The theorem yields for USS applied to
		\begin{equation*}
			\varrho(r \theta) = \exp(-\phi(r^m)) \ind_{\ooint{0}{\kappa}}(r^m)
		\end{equation*}
		the estimate
		\begin{equation*}
			\gap_{\pi}(P) \geq \frac{m}{d + m}_{\textbf{.}}
		\end{equation*}
		Despite being a restriction of the theorem, this can still be viewed as a vast generalization of the most closely related previous result: With $\kappa = \infty$, $\phi(r) = \alpha r$ and $m \leq d$ we obtain the setting of \cite[Example 3.15]{SlavaUSS}. Aside from generalizing this to potentially bounded support, non-linear $\phi$ and larger values of $m$, we also replace the gap estimate $1/(\lceil d/m \rceil + 1)$ shown in \cite{SlavaUSS} for this setting by the much nicer (and for almost all $m$ also larger) estimate $m/(d + m) = 1/(d/m + 1)$.
		\item Note that in the limit $m \ra \infty$, Theorem \ref{Thm:k-PSS_rot_inv} yields a spectral gap of $1$, corresponding to instantaneous convergence. We can give some intuition for this: Suppose we were using $k$-PSS for a target density $\varrho$ of the form
		\begin{equation*}
			\varrho(x)
			= \norm{x}^{k-d} \, c\,  \ind_A(x)
		\end{equation*}
		for some $c > 0$ and $A \in \B(\R^d)$ with $\lambda_d(A) < \infty$. Then we would get for $t \in \ooint{0}{c}$ that
		\begin{equation*}
			\varrho_{k,1}(x) > t
			\quad \LRA \quad 
			c \ind_A(x) > t
			\quad \LRA \quad 
			x \in A
		\end{equation*}
		and thus
		\begin{equation*}
			\varrho_{k,0}(x) \I(\varrho_{k,1}(x) > t)
			= \norm{x}^{k-d} \ind_A(x) 
			\propto \varrho(x) ,
		\end{equation*}
		so that each $X$-update would sample directly from the target distribution. We say that the sampler is \textit{exact} in this setting. Furthermore, since for $r \geq 0$ one has
		\begin{equation*}
			\lim_{m \ra \infty} r^m
			= \begin{cases} 0 & r < 1 \\ 1 & r = 1 \\ \infty & r > 1\end{cases}
		\end{equation*}
		the density \eqref{Eq:k-PSS_rot_inv} converges for $m \ra \infty$ almost everywhere to
		\begin{equation*}
			r\theta \mapsto r^{k-d} \exp(-\inf \phi) \ind_{\ooint{0}{1}}(r) ,
		\end{equation*}
		which fits into the exact sampling setting described above via $c := \exp(-\inf \phi)$ and $A = \Bf(1) \setminus \{0\}$. In other words, $k$-PSS performs increasingly well for the target densities \eqref{Eq:k-PSS_rot_inv} with increasing $m$ because they become closer and closer approximations of a target density for which the sampler is exact. Note that the same dynamic is also at play in our next result, Theorem \ref{Thm:k-PSS_non_rot}.
	\end{enumerate}
\end{discussion}

\subsection{Rotationally Asymmetric Targets}

In our second application of Theorem \ref{Thm:gap_estimate}, we consider what happens to the spectral gap of $k$-PSS when the target density is not rotationally invariant. Here it turns out to be very helpful to first restrict the setting of Theorem \ref{Thm:k-PSS_rot_inv} by eliminating the domain parameter $\kappa$ and the functional parameter $\phi$, and then generalize it by introducing a different functional parameter $\chi$ that allows for rotational asymmetry. In the resulting setting, the asymmetry does not have any effect on the spectral gap estimates we obtain, i.e.~it does not appear to hamper the samplers' performance.

\begin{theorem} \label{Thm:k-PSS_non_rot}
	Let $k \in \R_{>0}$ and let $\pi \in \Pc(\R^d)$ be a distribution with density $\varrho$ given by
	\begin{equation*}
		\varrho(r \theta)
		= r^{k-d} \exp(- \chi(\theta) r^m) ,
		\qquad r \in \R_+, \theta \in \sph^{d-1} ,
	\end{equation*}
	where $\chi: \sph^{d-1} \ra \R_{>0}$ is a measurable function\footnote{Note that some implicit assumptions are made on $\chi$ by assuming $\varrho$ to be integrable. These assumptions are rather weak, however. For example, assuming $\chi$ to be uniformly lower-bounded straightforwardly yields the required integrability.} and $m \in \R_{>0}$. Then 
	\begin{equation*}
		\gap_{\pi}(P)
		\geq \frac{m}{k+m}_{\textbf{,}}
	\end{equation*}
	where $P$ is the transition kernel of $k$-PSS for $\pi$.
\end{theorem}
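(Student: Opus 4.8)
The plan is to apply Theorem \ref{Thm:gap_estimate} by computing the generalized level set function $\ell_{\varrho_{k,0},\varrho_{k,1}}$ explicitly and verifying that it lies in $\Lambda_{k/m}$, exactly as in the proof of Theorem \ref{Thm:k-PSS_rot_inv}. The key new feature is that the second factor $\varrho_{k,1}(r\theta) = \exp(-\chi(\theta) r^m)$ now depends on the direction $\theta$, so the super-level set $\{x : \varrho_{k,1}(x) > t\}$ is no longer a ball; for each fixed $\theta$ one has $\varrho_{k,1}(r\theta) > t \LRA r < (\chi(\theta)^{-1}(-\log t))^{1/m}$ (valid for $t \in \ooint{0}{1}$; for $t \geq 1$ the level set condition becomes more delicate since $\chi(\theta)$ can be small, but $\sup\varrho_{k,1}$ handling mirrors the rotationally invariant case).

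First I would write, via the polar coordinates formula (Proposition \ref{Prop:polar_coords}),
\begin{align*}
	\ell_{\varrho_{k,0},\varrho_{k,1}}(t)
	&= \int_{\sph^{d-1}} \int_0^{\infty} r^{k-1} \I\!\left(r < (\chi(\theta)^{-1}(-\log t))^{1/m}\right) \d r \, \sigma_d(\d\theta) \\
	&= \frac{1}{k} \int_{\sph^{d-1}} \left( \frac{-\log t}{\chi(\theta)} \right)^{k/m} \sigma_d(\d\theta)
	= \frac{(-\log t)^{k/m}}{k} \int_{\sph^{d-1}} \chi(\theta)^{-k/m} \sigma_d(\d\theta) .
\end{align*}
Writing $C := k^{-1} \int_{\sph^{d-1}} \chi(\theta)^{-k/m} \sigma_d(\d\theta)$, which is a positive finite constant (finiteness follows from integrability of $\varrho$), we get $\ell_{\varrho_{k,0},\varrho_{k,1}}(t) = C\,(-\log t)^{k/m}$ for $t \in \ooint{0}{1}$, and I would then check this extends appropriately or that the support is $\ooint{0}{1}$ (up to the handling of $\sup\varrho_{k,1}$ when $\inf_\theta \chi(\theta) > 0$). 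Continuity and strict monotonicity on the support are immediate. For condition \eqref{Con:Lambda_k_iii} I would compute, as in Theorem \ref{Thm:k-PSS_rot_inv}, that $\ell_{\varrho_{k,0},\varrho_{k,1}}(\exp(-s))^{m/k} = C^{m/k}\, s$, which is linear — hence concave — in $s$, so by Proposition \ref{Prop:alternative_cond} the function $\ell_{\varrho_{k,0},\varrho_{k,1}}$ satisfies condition \eqref{Con:Lambda_k_iii} with parameter $k/m$. Theorem \ref{Thm:gap_estimate} then gives $\gap_\pi(P) \geq 1/(k/m + 1) = m/(k+m)$.

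The main obstacle I anticipate is purely bookkeeping around the top of the density's range: because $\chi$ varies over the sphere, $\varrho_{k,1}$ need not have a finite supremum (if $\inf_\theta \chi(\theta) = 0$, then along those directions $\varrho_{k,1}(r\theta) \to 1$ only in the limit $r \to 0$, but if $\chi$ can be arbitrarily small the level sets for $t$ near $1$ are unbounded in a way one must track carefully), so I would need to argue that the formula $\ell_{\varrho_{k,0},\varrho_{k,1}}(t) = C(-\log t)^{k/m}$ is in fact valid for \emph{all} $t \in \R_{>0}$ with $\ell > 0$, i.e.~that $\supp(\ell_{\varrho_{k,0},\varrho_{k,1}}) = \ooint{0}{1}$, regardless of the behavior of $\chi$ — this is where the bulk of the care goes, though it is conceptually routine once one notes $\varrho_{k,1}(r\theta) < 1$ for all $r > 0$ and $\varrho_{k,1}(r\theta) \to 1$ as $r \to 0$. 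Everything else mirrors the proof of Theorem \ref{Thm:k-PSS_rot_inv} with $\phi \equiv 0$ absorbed and the directional constant $C$ playing a harmless multiplicative role.
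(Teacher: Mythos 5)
Your proposal is correct and follows essentially the same route as the paper's proof: compute $\ell_{\varrho_{k,0},\varrho_{k,1}}(t) = C\,(-\log t)^{k/m}$ via the polar coordinates formula, observe that $\ell_{\varrho_{k,0},\varrho_{k,1}}(\exp(-s))^{m/k}$ is linear (hence concave) in $s$, invoke Proposition \ref{Prop:alternative_cond} to place $\ell_{\varrho_{k,0},\varrho_{k,1}}$ in $\Lambda_{k/m}$, and apply Theorem \ref{Thm:gap_estimate}. The support bookkeeping you flag as the ``main obstacle'' is handled in the paper simply by the observation that $\chi > 0$ forces $\varrho_{k,1} < 1$ everywhere with supremum $1$ attained in the limit $r \to 0$, so the support is $\ooint{0}{1}$ irrespective of $\inf_\theta \chi(\theta)$ — exactly the reasoning you sketch at the end.
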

\begin{proof}
	Note firstly that
	\begin{equation*}
		\varrho_{k,1}(r \theta) 
		= \exp(- \chi(\theta) r^m) ,
		\qquad r \in \R_+, \theta \in \sph^{d-1} ,
	\end{equation*}
	and secondly that by $\chi > 0$ we have $\varrho_{k,1} \leq \varrho_{k,1}(0) = \exp(0) = 1$. For $t \in \ooint{0}{1}$ one has
	\begin{align*}
		\varrho_{k,1}(r \theta) > t \quad 
		&\LRA \quad r^m < - \log(t) / \chi(\theta) \\
		&\LRA \quad r < (- \log(t) / \chi(\theta))^{1/m} ,
	\end{align*}
	where the first equivalence again relies on $\chi > 0$ and the second on $r \mapsto r^{1/m}$ being strictly increasing. Thus, by the polar coordinates formula (Proposition \ref{Prop:polar_coords}),
	\begin{align*}
		\ell_{\varrho_{k,0},\varrho_{k,1}}(t)
		&= \int_{\R^d} \norm{x}^{k-d} \I(\varrho_{k,1}(x) > t) \d x 
		= \int_{\sph^{d-1}} \int_0^{\infty} r^{k-1} \I(\varrho_{k,1}(r \theta) > t) \d r \sigma_d(\d \theta) \\ 
		&= \int_{\sph^{d-1}} \left[ \frac{1}{k} r^k \right]_0^{\left( - \log(t)/\chi(\theta) \right)^{1/m}} \sigma_d(\d \theta) 
		= \frac{1}{k} \int_{\sph^{d-1}} \left( - \log(t)/\chi(\theta) \right)^{k/m} \sigma_d(\d \theta) \\ 
		&= \frac{1}{k} \int_{\sph^{d-1}} \chi(\theta)^{-k/m} \sigma_d(\d \theta) \cdot (- \log(t))^{k/m} 
		=: C(k, \chi, m) \cdot (- \log(t))^{k/m} .
	\end{align*}
	From this it is clear that $\ell_{\varrho_{k,0},\varrho_{k,1}}$ is continuous and strictly decreasing on its support $\ooint{0}{1}$, so it only remains to verify condition \eqref{Con:Lambda_k_iii} of Definition \ref{Def:Lambda_k} in order to apply Theorem \ref{Thm:gap_estimate}. To this end, we observe that the above computation shows in particular
	\begin{equation*}
		\ell_{\varrho_{k,0},\varrho_{k,1}}(\exp(-s))^{m/k} = C(k,\chi,m)^{m/k} s ,
	\end{equation*}
	which is linear, and therefore concave, in $s$. Thus, again by Proposition \ref{Prop:alternative_cond}, we have shown $\ell_{\varrho_{k,0},\varrho_{k,1}} \in \Lambda_{k/m}$, so that Theorem \ref{Thm:gap_estimate} applies and yields
	\begin{equation*}
		\gap_{\pi}(P)
		\geq \frac{1}{k/m + 1}
		= \frac{m}{k + m}_{\textbf{.}}
	\end{equation*}
\end{proof}

\begin{discussion} \ \label{Dis:RotAsy}
	\begin{enumerate}[(a)]
		\item As far as we can tell, the result in Theorem \ref{Thm:k-PSS_non_rot} is not closely related to any result for slice sampling from any previously published work. It appears to be a major advance in understanding the spectral gap of slice sampling, particularly USS, in rotationally asymmetric settings, as we will elaborate in the following.
		\item We note, however, that there is some overlap between the classes of distributions for which Theorems \ref{Thm:k-PSS_rot_inv} and \ref{Thm:k-PSS_non_rot} apply. Namely, when setting $\chi(\theta) := c$ for some $c > 0$ and all $\theta \in \sph^{d-1}$, as well as $\kappa := \infty$, $\phi(r) := c \, r$, and choosing $k \in \R_{>0}$ and $m \in \R_{>0}$ the same in both theorems, the target densities they consider coincide. Notably also the spectral gap estimates they provide in these cases coincide.
		\item Aside from allowing us to prove the desired result, the computation of $\ell_{\varrho_{k,0},\varrho_{k,1}}$ in the proof of Theorem \ref{Thm:k-PSS_non_rot} also shows that
		\begin{equation*}
			\ell_{\varrho_{k,0},\varrho_{k,1}}(\exp(-s))^{1/p} = C(k,\chi,m)^{1/p} s^{k/(m p)}
		\end{equation*}
		for any $p > 0$. This is concave in $s$ if and only if $k / (p m) \leq 1$, i.e.~$p \geq k/m$. By Proposition \ref{Prop:alternative_cond}, this yields $\ell_{\varrho_{k,0},\varrho_{k,1}} \in \Lambda_p$ for $p \geq k/m$ and $\ell_{\varrho_{k,0},\varrho_{k,1}} \not\in \Lambda_p$ for $p < k/m$. As both $k$ and $m$ are allowed to take arbitrary values in $\R_{>0}$, this is an example of our generalization of the statement of Theorem \ref{Thm:gap_estimate} from a discrete sequence of classes $(\Lambda_k)_{k \in \N}$ (as in \cite{PSS_paper}) to a continuous spectrum $(\Lambda_k)_{k \in \R_{>0}}$ (as in our Theorem \ref{Thm:gap_estimate}) leading to a quantifiable improvement in the resulting spectral gap estimates.
		\item Again it is worthwhile to contemplate the theorem's restriction to USS, i.e.~the case $k = d$. Here the theorem yields for USS applied to
		\begin{equation*}
			\varrho(r \theta) 
			= \exp(- \chi(\theta) r^m)
		\end{equation*}
		the estimate
		\begin{equation}
			\gap_{\pi}(P)
			\geq \frac{m}{d + m}_{\textbf{.}}
			\label{Eq:USS_gap_est_non_rot}
		\end{equation}
		\item As it may not be obvious how the theorem could be applied in practice, we point out that, for USS, setting $m := 2$ and $\chi(\theta) := \frac{1}{2} \theta^T \Sigma^{-1} \theta$ for a symmetric and positive definite $\Sigma \in \R^{d \times d}$, one obtains
		\begin{equation*}
			\varrho(r \theta)
			= \exp(- \frac{1}{2} \theta^T \Sigma^{-1} \theta \cdot r^2)
			= \exp(- \frac{1}{2} (r\theta)^T \Sigma^{-1} (r \theta) )
			\propto \Nc_d(r \theta; 0, \Sigma) ,
		\end{equation*}
		so that $\pi = \Nc_d(0, \Sigma)$. The lower bound on the spectral gap proven in this case is $2/(d + 2)$. By location invariance of USS\footnote{That is, the fact that the transition mechanism of USS does not depend on where the target distribution's high probably regions are, only how they are located relative to one another.}, the same lower bound holds for the sampler applied to any multivariate normal target $\Nc_d(a,\Sigma)$. \label{Enu:USS_gap_Gaussians}
		\item Furthermore, by leaving $m \in \R_{>0}$ arbitrary, setting $\chi(\theta) := \frac{1}{2} \big( \theta^T \Sigma^{-1} \theta \big)^{m/2}$ and again using the location invariance of USS, one can cover a large class of generalized Gaussian distributions, namely all those with densities of the shape
		\begin{equation*}
			\varrho(x)
			= \exp(- \frac{1}{2} \big( (x - a)^T \Sigma^{-1} (x - a) \big)^{m/2} )
		\end{equation*}
		for an $a \in \R^d$ and a positive definite $\Sigma \in \R^{d \times d}$. The estimate obtained in these cases is \eqref{Eq:USS_gap_est_non_rot}.
		\item Regarding the theorem's restriction to PSS, we note that, much like in Theorem \ref{Thm:k-PSS_rot_inv}, it is unfortunate that we only obtain a result for target densities with a pole of order $(d-1)$ at the origin. Nevertheless, we find it noteworthy that the only previously available quantitative theoretical result regarding PSS for rotationally asymmetric targets was the broadly applicable but comparatively unsharp convergence estimate in \cite{PolarSS}, which additionally does not seem to imply a spectral gap. In other words, the restriction of Theorem \ref{Thm:k-PSS_non_rot} to PSS is, as far as we can tell, both the first result to provide ``realistic'' (i.e.~potentially sharp) estimates of the convergence rate (cf.~\eqref{Eq:gap_implied_convergence}) of PSS for a class of rotationally asymmetric targets, and the first to estimate the sampler's spectral gap in such a setting, which, as explained in the introduction, has many useful implications besides convergence.
	\end{enumerate}
\end{discussion}

\subsection{USS for multivariate $t$-distributions}

By combining the strategy of Theorem \ref{Thm:k-PSS_non_rot} with the results of Theorems \ref{Thm:USS_contraction_std_multiv_t}, Lemma \ref{Lem:gap_identity_ell} and \eqref{Eq:contraction_to_gap}, we can ultimately estimate the spectral gap of USS for virtually arbitrary multivariate $t$-distributions.

\begin{theorem} \label{Thm:USS_gap_multiv_t}
	Let $\pi \in \Pc(\R^d)$ with density $\varrho$ given by
	\begin{equation}
		\varrho(r \theta)
		= \left( 1 + \frac{1}{m} \chi(\theta) r^2 \right)^{-(d+m)/2} ,
		\qquad r \in \R_+, \theta \in \sph^{d-1} ,
		\label{Eq:multiv_t}
	\end{equation}
	where $m > 2$ and $\chi: \sph^{d-1} \ra \R_{>0}$ is measurable. Then
	\begin{equation}
		\gap_{\pi}(P_{\pi})
		\geq 1 - \frac{d(d+m)}{(d+1)(d+m-1)} ,
		\label{Eq:USS_gap_multiv_t}
	\end{equation}
	where $P_{\pi}$ denotes the transition kernel of USS for $\pi$.
\end{theorem}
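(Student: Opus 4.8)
The plan is to reduce the spectral gap of USS for \eqref{Eq:multiv_t} to that of USS for the standard multivariate $t$-distribution \eqref{Eq:std_multiv_t} in the same dimension $d$ and with the same parameter $m$, by means of Lemma \ref{Lem:gap_identity_ell}, and then to invoke Theorem \ref{Thm:USS_contraction_std_multiv_t} together with the implication \eqref{Eq:contraction_to_gap}. The first step is a level-set computation of exactly the type appearing in the proof of Theorem \ref{Thm:k-PSS_non_rot}. For USS one has $\varrho_0 \equiv 1$, $\varrho_1 = \varrho$, and $\sup \varrho = \varrho(0) = 1$, so for $t \in \ooint{0}{1}$ the equivalence $\varrho(r\theta) > t \;\LRA\; r < (\tfrac{m}{\chi(\theta)}(t^{-2/(d+m)} - 1))^{1/2}$ holds, and the polar-coordinates formula (Proposition \ref{Prop:polar_coords}) gives
\[
	\ell_{\varrho_0,\varrho_1}(t)
	= \frac{m^{d/2}}{d} \int_{\sph^{d-1}} \chi(\theta)^{-d/2} \sigma_d(\d\theta) \cdot \left( t^{-2/(d+m)} - 1 \right)^{d/2}
	=: C \left( t^{-2/(d+m)} - 1 \right)^{d/2}
\]
for $t \in \ooint{0}{1}$, and $\ell_{\varrho_0,\varrho_1}(t) = 0$ otherwise. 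Here $C \in \ooint{0}{\infty}$, because finiteness of $\int_{\sph^{d-1}} \chi(\theta)^{-d/2} \sigma_d(\d\theta)$ is precisely what integrability of $\varrho$ (which holds since $\pi \in \Pc(\R^d)$) amounts to.

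Running the identical computation for the standard density $\eta$ of \eqref{Eq:std_multiv_t} in dimension $d$ (the special case $\chi \equiv 1$) yields a level-set function of exactly the same shape, $t \mapsto C_0(t^{-2/(d+m)} - 1)^{d/2}$ on $\ooint{0}{1}$, with a possibly different positive constant $C_0$. To remove this mismatch, I would equip $\eta$ with the factorization $\eta_0(x) := C/C_0$, $\eta_1(x) := \eta(x)$: the constant factor cancels in the $X$-update and merely rescales the uniform threshold variable in the $T$-update, so slice sampling for the standard $t$-distribution with this factorization coincides, as a Markov kernel, with USS for it; yet its generalized level-set function equals $(C/C_0)$ times the USS one, hence equals $\ell_{\varrho_0,\varrho_1}$. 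Lemma \ref{Lem:gap_identity_ell} then gives $\gap_\pi(P_\pi) = \gap_{\pi_0}(P_{\pi_0})$, where $\pi_0$ denotes the standard multivariate $t$-distribution \eqref{Eq:std_multiv_t} in dimension $d$ with parameter $m$, and $P_{\pi_0}$ its USS kernel.

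It remains to bound $\gap_{\pi_0}(P_{\pi_0})$. Since $m > 2 > 1$, Theorem \ref{Thm:USS_contraction_std_multiv_t} shows $P_{\pi_0}$ to be Wasserstein contractive with rate $\rho = d(d+m)/((d+1)(d+m-1))$. Because $m > 2$ guarantees that $\pi_0$ has finite second moment, and USS is reversible with respect to its target, the implication \eqref{Eq:contraction_to_gap} applies and yields $\gap_{\pi_0}(P_{\pi_0}) \geq 1 - \rho$; chaining this with the preceding identity produces \eqref{Eq:USS_gap_multiv_t}.

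The individual ingredients --- the level-set inversion, the polar-coordinates integration, and the matching of supports (both equal to $\ooint{0}{1}$) --- are routine and closely parallel earlier proofs, so I expect the one step that requires genuine care to be the constant-matching: one must check that multiplying the factor $\eta_0$ of a slice-sampling factorization by a positive constant leaves the transition kernel unchanged, which is what legitimizes invoking Lemma \ref{Lem:gap_identity_ell} with the auxiliary factorization $(C/C_0,\eta)$ of the standard $t$-density. It is also worth stressing that the hypothesis $m > 2$, rather than the $m > 1$ of Theorem \ref{Thm:USS_contraction_std_multiv_t}, is forced precisely by the finite-second-moment requirement in \eqref{Eq:contraction_to_gap}.
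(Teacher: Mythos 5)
Your proposal is correct and follows essentially the same route as the paper: compute the generalized level-set function for the asymmetric $t$-density via polar coordinates, observe that it is a positive multiple of the standard $t$-density's level-set function, absorb that constant into the first factor of the slice-sampling factorization in order to invoke Lemma \ref{Lem:gap_identity_ell}, and then chain Theorem \ref{Thm:USS_contraction_std_multiv_t} with the implication \eqref{Eq:contraction_to_gap}. One minor verbal slip worth fixing: with your factorization $\eta_0 := C/C_0$, $\eta_1 := \eta$, the $T$-update draws from $\U(\ooint{0}{\eta_1(x)}) = \U(\ooint{0}{\eta(x)})$ exactly as in USS, so nothing is rescaled there --- the constant $\eta_0$ simply cancels between numerator and denominator of $\mu_t$ in the $X$-update and leaves both updates literally unchanged.
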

\begin{proof}
	First of all, it is easily seen that the transition kernel of USS for any density $\eta$ coincides, for any $c > 0$, with that of slice sampling for $\zeta := c \cdot \eta$ with factorization $\zeta_0 := c \cdot \mathbf{1}$, $\zeta_1 := \eta$. Of course the transition kernels coinciding means that the respective spectral gaps must also coincide. Furthermore, we trivially get $c \cdot \ell_{\mathbf{1},\eta} \equiv \ell_{c \cdot \mathbf{1}, \eta} \equiv \ell_{\zeta_0, \zeta_1}$. Therefore, if $\ell_{\mathbf{1},\varrho} \equiv c \cdot \ell_{\mathbf{1},\eta}$, then $\ell_{\mathbf{1},\varrho} \equiv \ell_{\zeta_0,\zeta_1}$, so that by Lemma \ref{Lem:gap_identity_ell} the spectral gap of USS for $\varrho$ coincides with that of slice sampling for $\zeta$. Since this gap in turn coincides with that of USS for $\eta$, we ultimately obtain coinciding spectral gaps of USS for $\varrho$ and USS for $\eta$. For proving Theorem \ref{Thm:USS_gap_multiv_t}, the relevant consequence of all this is
	\begin{equation}
		\ell_{\mathbf{1}, \varrho} \propto \ell_{\mathbf{1}, \eta} 
		\quad \RA \quad 
		\gap_{\pi}(P_{\pi})
		= \gap_{\nu}(P_{\nu}) ,
		\label{Eq:gap_identity_USS}
	\end{equation}
	where $\nu$ denotes the distribution with yet-to-be-specified density $\eta$ and $P_{\nu}$ the transition kernel of USS for $\nu$.

	Next, note that for $r \in \R_+$, $\theta \in \sph^{d-1}$, $t \in \ooint{0}{1}$, we get
	\begin{align*}
		\varrho(r \theta) > t \quad 
		&\LRA \quad 1 + \frac{1}{m} \chi(\theta) r^2 < t^{-2/(d+m)} \\
		&\LRA \quad r < \sqrt{ \frac{m}{\chi(\theta)} (t^{-2/(d+m)} - 1) } .
	\end{align*}
	Thus, by the polar coordinates formula (Proposition \ref{Prop:polar_coords}), and omitting some very elementary integration steps already performed in detail in other proofs,
	\begin{align*}
		\ell_{\mathbf{1},\varrho}(t)
		&= \int_{\R^d} \I(\varrho(x) > t) \d x \\ 
		&= \int_{\sph^{d-1}} \int_0^{\infty} \I\left(r < \sqrt{ \frac{m}{\chi(\theta)} (t^{-2/(d+m)} - 1) }\right) r^{d-1} \d r \sigma_d(\d \theta) \\ 
		&= \frac{1}{d} \int_{\sph^{d-1}} \left( \frac{m}{\chi(\theta)} (t^{-2/(d+m)} - 1) \right)^{d/2} \sigma_d(\d \theta) \\ 
		&= \frac{m^{d/2}}{d} \int_{\sph^{d-1}} \chi(\theta)^{-d/2} \sigma_d(\d \theta) \cdot \left( t^{-2/(d+m)} - 1 \right)^{d/2} .
	\end{align*}
	This shows that $\ell_{\mathbf{1},\varrho} \propto \ell_{\mathbf{1},\eta}$, where $\eta$ is given by \eqref{Eq:multiv_t} with $\chi \equiv \mathbf{1}$, which is simply \eqref{Eq:std_multiv_t}. Hence, \eqref{Eq:gap_identity_USS} yields $\gap_{\pi}(P_{\pi}) = \gap_{\nu}(P_{\nu})$. Finally, by Theorem \ref{Thm:USS_contraction_std_multiv_t} and \eqref{Eq:contraction_to_gap} (which can be applied here because the constraint $m > 2$ ensures that $\pi$ has finite second moment), we get
	\begin{equation*}
		\gap_{\nu}(P_{\nu})
		\geq 1 - \frac{d(d+m)}{(d+1)(d+m-1)}_{\textbf{.}}
	\end{equation*}
	Combining the last two partial results yields the claim.
\end{proof}

\begin{discussion} \
	\begin{enumerate}[(a)]
		\item By choosing $\chi(\theta) := \theta^T \Sigma^{-1} \theta$ for a positive definite matrix $\Sigma \in \R^{d \times d}$, choosing any $a \in \R^d$ and again using the location invariance of USS (cf.~footnote in Discussion \ref{Dis:RotAsy} \eqref{Enu:USS_gap_Gaussians}), one obtains the gap estimate \eqref{Eq:USS_gap_multiv_t} for USS applied to the target density
		\begin{equation*}
			\varrho(x)
			= \left( 1 + \frac{1}{m} (x - a)^T \Sigma^{-1} (x - a) \right)^{-(d+m)/2} .
		\end{equation*}
		Setting aside their normalization, the densities of this form are precisely those of arbitrary multivariate $t$-distributions, except that we require $m > 2$, whereas the distribution is also well-defined for $0 < m \leq 2$.
		\item Both here and in our result on USS for arbitrary multivariate Gaussians $\Nc_d(a, \Sigma)$ (cf.~Discussion \ref{Dis:RotAsy} \eqref{Enu:USS_gap_Gaussians}), the matrix $\Sigma$ that causes the target density to be rotationally asymmetric does not affect the spectral gap estimate we obtain. Our analysis (specifically the computations of $\ell_{\mathbf{1},\varrho}$, combined with \eqref{Eq:gap_identity_USS}) shows that this is actually a strong point of our technique, as the spectral gap of USS in these scenarios is indeed unaffected by $\Sigma$.
	\end{enumerate}
\end{discussion}

\begin{acks}
	The author thanks Daniel Rudolf for many inspiring discussions on the topic and gratefully acknowledges support by the Carl Zeiss Stiftung within the program ``CZS Stiftungsprofessuren'' and the project ``Interactive Inference''. Moreover, the author is thankful for the support of the Deutsche Forschungsgemeinschaft (DFG) within project 432680300 -- Collaborative Research Center 1456 ``Mathematics of Experiment''.
\end{acks}

\appendix
\section{Auxiliary Results} \label{App:aux_res}

\subsection{Polar Coordinates}

We frequently make use of the following well-known result, which we refer to as the \textit{polar coordinates formula}.

\begin{proposition} \label{Prop:polar_coords}
	For any function $g: (\R^d,\B(\R^d)) \ra (\R,\B(\R))$ that is measurable and integrable, one has
	\begin{equation*}
		\int_{\R^d} g(x) \d x
		= \int_{\sph^{d-1}} \int_0^{\infty} g(r \theta) r^{d-1} \d r \sigma_d(\d \theta) ,
	\end{equation*}
	where $\sigma_d$ denotes the surface measure on $(\sph^{d-1},\B(\sph^{d-1}))$.
\end{proposition}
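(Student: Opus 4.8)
The plan is to reduce the statement to indicator functions and then identify the two resulting measures. Define a set function $\mu$ on $\B(\R^d)$ by
\begin{equation*}
	\mu(B) := \int_{\sph^{d-1}} \int_0^{\infty} \ind_B(r\theta)\, r^{d-1}\, \d r\, \sigma_d(\d\theta) ,
	\qquad B \in \B(\R^d) .
\end{equation*}
For $B = \bigsqcup_{n} B_n$ we have $\ind_B = \sum_n \ind_{B_n}$, so monotone convergence applied first to the inner Lebesgue integral and then to the outer $\sigma_d$-integral shows that $\mu$ is countably additive, i.e.\ a measure. The formula for nonnegative measurable $g$ then follows from the definition of $\mu$ by the usual ``indicators $\to$ simple functions $\to$ increasing limits'' argument, and the general case by splitting $g = g^+ - g^-$, with integrability of $g$ ensuring, via the nonnegative case applied to $|g|$, that the right-hand iterated integral converges absolutely. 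Hence it suffices to prove $\mu = \lambda_d$, where $\lambda_d$ is Lebesgue measure.

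Since $\mu(\{0\}) = 0 = \lambda_d(\{0\})$, it is enough to check $\mu \equiv \lambda_d$ on $\B(\R^d \setminus \{0\})$, and for this I would run a $\pi$-$\lambda$ argument. The polar cells
\begin{equation*}
	C_{a,b,A} := \{ r\theta : a < r < b,\ \theta \in A \} ,
	\qquad 0 \le a < b < \infty,\ A \in \B(\sph^{d-1}) ,
\end{equation*}
form a $\pi$-system generating $\B(\R^d \setminus \{0\})$; both $\mu$ and $\lambda_d$ are finite on each $C_{a,b,A}$, and $\R^d \setminus \{0\}$ is exhausted by the cells $C_{1/n,n,\sph^{d-1}}$, so $\sigma$-finiteness holds and it suffices to verify the equality on the $C_{a,b,A}$. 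From the product structure $\ind_{C_{a,b,A}}(r\theta) = \ind_{\ooint{a}{b}}(r)\,\ind_A(\theta)$ one reads off $\mu(C_{a,b,A}) = \big(\int_a^b r^{d-1}\,\d r\big)\,\sigma_d(A) = \tfrac{1}{d}(b^d - a^d)\,\sigma_d(A)$. On the Lebesgue side, the scaling relations $C_{0,b,A} = b\cdot C_{0,1,A}$ and $\lambda_d(c\,B) = c^d \lambda_d(B)$, together with $C_{0,a,A} \subseteq C_{0,b,A}$ and the fact that the bounding sphere slice $\{a\theta : \theta \in A\}$ is $\lambda_d$-null, give $\lambda_d(C_{a,b,A}) = (b^d - a^d)\,\lambda_d(C_{0,1,A})$. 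So everything reduces to the single identity $\lambda_d(C_{0,1,A}) = \sigma_d(A)/d$.

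The only part that is genuine content rather than bookkeeping is establishing this last identity for $\sigma_d$ \emph{as described in the excerpt}, namely the rotationally invariant measure on $\sph^{d-1}$ whose total mass equals the surface area of $\sph^{d-1}$. I would take $\widetilde\sigma_d(A) := d\cdot\lambda_d(C_{0,1,A})$ as a candidate and check: $A \mapsto C_{0,1,A}$ respects countable disjoint unions, so $\widetilde\sigma_d$ is a measure; $C_{0,1,\rho A} = \rho\big(C_{0,1,A}\big)$ for every rotation $\rho \in O(d)$, so rotation invariance of $\lambda_d$ transfers to $\widetilde\sigma_d$; and $\widetilde\sigma_d(\sph^{d-1}) = d\,\lambda_d(C_{0,1,\sph^{d-1}}) = d\,\lambda_d(\Bf(1)) = \omega_d$, the surface area. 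The uniqueness of a rotation-invariant measure on $\sph^{d-1}$ with prescribed finite total mass --- a standard consequence of $\sph^{d-1}$ being a homogeneous space for the compact group $O(d)$ --- then forces $\widetilde\sigma_d = \sigma_d$, which is exactly the required identity. I expect the main obstacle to be precisely this normalization-and-uniqueness step (the rest being routine measure theory); it can be bypassed by citing a standard reference for the change to polar coordinates, e.g.\ Folland's \emph{Real Analysis} (Theorem~2.49) or Evans--Gariepy, which is in line with treating the formula as well known.
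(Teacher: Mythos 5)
Your proof is correct, but it takes a substantially more self-contained route than the paper, which dispatches this proposition in one line by citing \cite[Theorem 15.13]{Schilling}. You instead build the ``polar'' measure $\mu$ directly, verify it is a measure, reduce the identity $\mu=\lambda_d$ to the polar cells $C_{a,b,A}$ via a $\pi$-$\lambda$ argument with the obvious $\sigma$-finite exhaustion, compute both sides on the cells, and land on the single nontrivial identity $\lambda_d(C_{0,1,A})=\sigma_d(A)/d$, which you settle by uniqueness of rotation-invariant finite measures on $\sph^{d-1}$. That is all sound, and you are right that the last step is where the actual content lives. The one point worth being careful about is the normalization: uniqueness of the rotation-invariant measure only gives $\widetilde\sigma_d=c\,\sigma_d$ for some $c>0$, and pinning $c=1$ amounts to the identity $d\,\lambda_d(\Bf(1))=\omega_d$, i.e.\ $dV_d=\omega_d$, where $\omega_d$ is the surface area. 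Since that identity is itself most commonly \emph{derived} from the polar-coordinates formula, one must either supply an independent argument for it (e.g.\ defining surface area via $(d-1)$-dimensional Hausdorff measure and invoking the coarea-type relation $\tfrac{\d}{\d r}\lambda_d(\Bf(r))=\mathcal H^{d-1}(\partial\Bf(r))$), or -- as you already suggest -- simply cite a standard reference, which is exactly what the paper does. In short: your argument buys self-containedness at the cost of having to be careful not to beg the question in the normalization step; the paper's citation sidesteps that entirely.
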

\begin{proof}
	See \cite[Theorem 15.13]{Schilling}.
\end{proof}

At some points, we require the following consequence of the polar coordinates formula, which, following \cite{GPSS}, we refer to as \textit{sampling in polar coordinates}.

\begin{corollary} \label{Cor:sampling_in_polar_coords}
	Let $\xi \in \Pc(\R^d)$ with density $h_{\xi}: \R^d \ra \R_+$, then by Proposition \ref{Prop:polar_coords} we have
	\begin{equation*}
		\xi(A)
		= \int_{\R^d} \ind_A(x) h_{\xi}(x) \d x 
		= \int_{\sph^{d-1}} \int_0^{\infty} \ind_A(r \theta) h_{\xi}(r \theta) r^{d-1} \d r \sigma_d(\d \theta) ,
		\qquad A \in \B(\R^d) .
	\end{equation*}
	Thus $X \sim \xi$ can be sampled in polar coordinates as $X := R \cdot \Theta$ by sampling $(R, \Theta)$ from the joint distribution with density
	\begin{equation*}
		(r, \theta) 
		\mapsto h_{\xi}(r \theta) r^{d-1} \ind_{\R_+}(r)
	\end{equation*}
	w.r.t.~$\lambda_1 \otimes \sigma_d$, where $\lambda_1$ denotes the Lebesgue measure on $(\R, \B(\R))$.
\end{corollary}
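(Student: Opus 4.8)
The statement is essentially an immediate corollary, so the plan is just to supply the short argument the authors elide. First I would fix an arbitrary set $A \in \B(\R^d)$ and apply the polar coordinates formula (Proposition \ref{Prop:polar_coords}) to the function $g := \ind_A \cdot h_\xi$; this $g$ is measurable as a product of measurable functions and integrable since $0 \le g \le h_\xi$ and $\int_{\R^d} h_\xi(x)\,\d x = 1$. Applying the formula to $g$ yields the asserted identity, the leftmost expression $\int_{\R^d} \ind_A(x)\, h_\xi(x)\,\d x$ being equal to $\xi(A)$ by definition of the density $h_\xi$.

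It then remains to justify the ``Thus'' clause: that drawing $(R,\Theta)$ from the stated joint law and returning $R\Theta$ produces a sample from $\xi$. I would first note that $q(r,\theta) := h_\xi(r\theta)\, r^{d-1}\,\ind_{\R_+}(r)$ is genuinely a probability density with respect to $\lambda_1 \otimes \sigma_d$: it is nonnegative, it is $\B(\R)\otimes\B(\sph^{d-1})$-measurable (using that this product $\sigma$-algebra coincides with $\B(\R\times\sph^{d-1})$ by second countability and that $(r,\theta)\mapsto r\theta$ is continuous, hence Borel), and its integral with respect to $\lambda_1\otimes\sigma_d$ equals $\xi(\R^d)=1$ by taking $A=\R^d$ in the identity from the first step. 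Now let $(R,\Theta)$ have density $q$ with respect to $\lambda_1\otimes\sigma_d$ and set $X:=R\Theta$, a well-defined random element of $\R^d$ since $(r,\theta)\mapsto r\theta$ is continuous. For any $A\in\B(\R^d)$, the probability that $X\in A$ is the integral of $\ind_A(r\theta)\,q(r,\theta)$ against $\lambda_1\otimes\sigma_d$, which by Tonelli's theorem (the integrand being nonnegative) equals $\int_{\sph^{d-1}}\int_0^{\infty}\ind_A(r\theta)\,h_\xi(r\theta)\,r^{d-1}\,\d r\,\sigma_d(\d\theta)$, and this is $\xi(A)$ by the first step. Hence $X\sim\xi$, which is exactly the sampling claim.

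I do not expect any genuine obstacle: the only points requiring a word of care are the integrability of $g$ needed to invoke Proposition \ref{Prop:polar_coords}, the identification $\B(\R)\otimes\B(\sph^{d-1}) = \B(\R\times\sph^{d-1})$ (so that ``density with respect to $\lambda_1\otimes\sigma_d$'' and the pointwise map $(r,\theta)\mapsto r\theta$ interact correctly), and the Borel measurability of that map — all of which are standard.
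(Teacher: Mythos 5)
Your proposal is correct and takes the same route the paper intends: the corollary is stated without a separate proof precisely because it is the polar coordinates formula applied to $g = \ind_A\, h_\xi$, followed by the standard observation that integrating $h_\xi(r\theta)\,r^{d-1}\ind_{\R_+}(r)$ against $\lambda_1\otimes\sigma_d$ recovers $\xi$. Your added care about integrability of $g$, the product $\sigma$-algebra identification, and Tonelli merely makes explicit what the paper leaves implicit.
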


\subsection{Some Elementary Results}

\begin{lemma} \label{Lem:phi_inverse}
	Let $\phi: \oointv{0}{\kappa} \ra \R$ be as in Definition \ref{Def:D_k_def}. Denote the domain of $\phi$ by $D_{\phi} := \ooint{0}{\kappa}$ and the image by $I_{\phi} := \phi(D_{\phi})$. Then one has $I_{\phi} = \ooint{\inf \phi}{\sup \phi}$, where 
	\begin{align*}
		\inf \phi
		&:= \inf_{r \in D_{\phi}} \phi(r) 
		= \lim_{r \searrow 0} \phi(r) \\
		\sup \phi 
		&:= \sup_{r \in D_{\phi}} \phi(r)
		= \lim_{r \nearrow \kappa} \phi(r) .
	\end{align*}
	Furthermore, $\phi$ maps bijectively onto $I_{\phi}$ and its inverse $\phi^{-1}: I_{\phi} \ra D_{\phi}$ is again strictly increasing.
\end{lemma}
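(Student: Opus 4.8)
The plan is to exploit the two defining properties of $\phi$ from Definition \ref{Def:D_k_def} — strict monotonicity and convexity — essentially separately. First I would record the standard fact that a convex function on an open interval is automatically continuous, so that $\phi$ is continuous on $D_{\phi} = \ooint{0}{\kappa}$. Since $\phi$ is moreover strictly increasing, it has one-sided limits everywhere; in particular $\lim_{r \searrow 0} \phi(r)$ exists in $\coint{-\infty}{\infty}$ and $\lim_{r \nearrow \kappa} \phi(r)$ exists in $\ocint{-\infty}{\infty}$, and monotonicity forces these to coincide with $\inf_{r \in D_{\phi}} \phi(r)$ and $\sup_{r \in D_{\phi}} \phi(r)$ respectively. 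This already establishes the two displayed identities for $\inf\phi$ and $\sup\phi$.

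Next I would pin down the image $I_{\phi}$. The inclusion $I_{\phi} \subseteq \ooint{\inf\phi}{\sup\phi}$ is the easy direction: for any $r_0 \in D_{\phi}$, strict monotonicity yields points $r_- < r_0 < r_+$ in $D_{\phi}$ with $\phi(r_-) < \phi(r_0) < \phi(r_+)$, whence $\inf\phi < \phi(r_0) < \sup\phi$, so the endpoints are never attained. For the reverse inclusion I would fix $v \in \ooint{\inf\phi}{\sup\phi}$; by the definitions of infimum and supremum there are $r_1, r_2 \in D_{\phi}$ with $\phi(r_1) < v < \phi(r_2)$, and since $\phi$ is continuous on the compact interval $[r_1, r_2] \subseteq D_{\phi}$, the intermediate value theorem produces an $r \in D_{\phi}$ with $\phi(r) = v$. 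Together these give $I_{\phi} = \ooint{\inf\phi}{\sup\phi}$.

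Finally, injectivity of $\phi$ is immediate from strict monotonicity, so $\phi$ is a bijection $D_{\phi} \ra I_{\phi}$ and $\phi^{-1} \colon I_{\phi} \ra D_{\phi}$ is well-defined; that $\phi^{-1}$ is again strictly increasing is the elementary observation that $s_1 < s_2$ together with $\phi^{-1}(s_1) \geq \phi^{-1}(s_2)$ would, upon applying the increasing map $\phi$, give $s_1 \geq s_2$. Every individual step here is routine, so I do not anticipate a genuine obstacle; the only points demanding a little care are the appeal to continuity of $\phi$ (which rests on convexity rather than monotonicity) to guarantee that the image is the \emph{full} open interval instead of merely a dense subset, and bookkeeping the possibility that $\inf\phi = -\infty$ or $\sup\phi = +\infty$, which the argument accommodates without change.
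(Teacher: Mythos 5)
Your proposal is correct and follows essentially the same route as the paper's proof: convexity on an open interval gives continuity of $\phi$, which together with strict monotonicity identifies $I_\phi$ as the full open interval $\ooint{\inf\phi}{\sup\phi}$, and strict monotonicity then yields bijectivity and a strictly increasing inverse. You merely spell out the intermediate value theorem step and the non-attainment of endpoints, which the paper leaves implicit in the phrase ``its image is again an open interval.''
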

\begin{proof}
	Because $\phi$ is a convex function defined on an open interval, it is continuous, which shows that its image is again an open interval. That this interval must have the claimed boundaries is clear from the fact that $\phi$ is strictly increasing.
	
	Observe that the strict monotonicity of $\phi$ also guarantees its injectivity and note that every function is surjective onto its image. Hence $\phi$ maps bijectively $D_{\phi} \ra I_{\phi}$ and there exists an inverse function $\phi^{-1}: I_{\phi} \ra D_{\phi}$.
	
	Let now $s_1, s_2 \in I_{\phi}$ with $s_1 > s_2$. Set $r_i := \phi^{-1}(s_i)$, $i=1,2$, so that in particular $\phi(r_i) = s_i$. Then, since $\phi$ is strictly increasing and $\phi(r_1) = s_1 > s_2 = \phi(r_2)$ by assumption, we must have $r_1 > r_2$. This yields $\phi^{-1}(s_1) = r_1 > r_2 = \phi^{-1}(s_2)$, so $\phi^{-1}$ is again strictly increasing.
\end{proof}

\begin{figure}[tb]
	\centering 
	\includegraphics[scale=0.65]{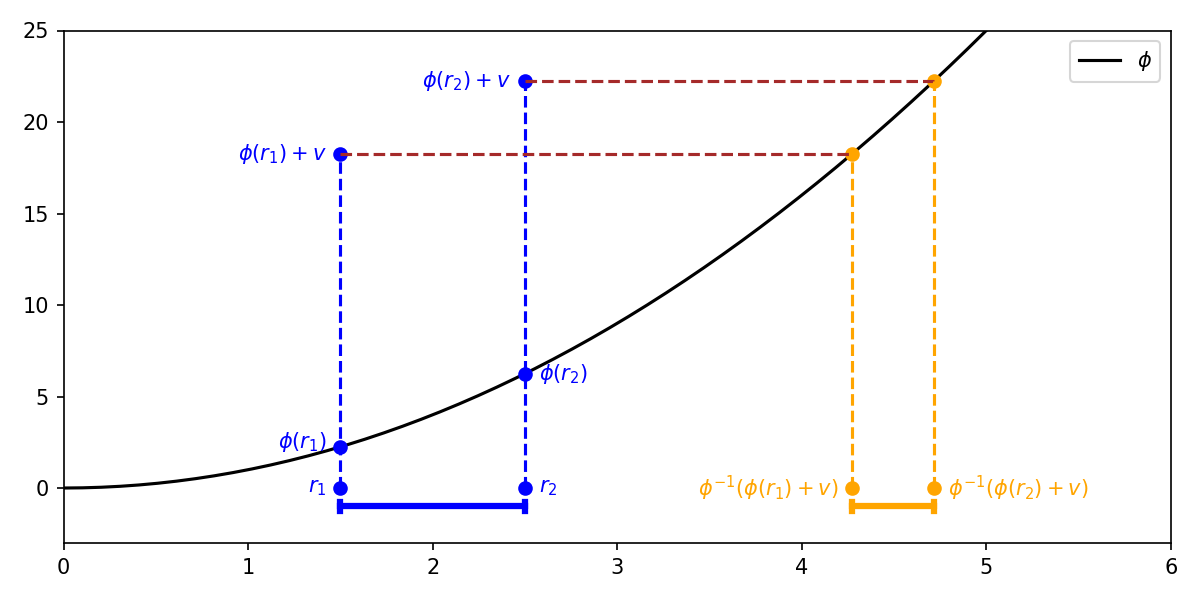}
	\caption{Illustration of the property shown in Lemma \ref{Lem:non_expansion_prop} for $\kappa=\infty$. Two points $r_1, r_2 \in \oointv{0}{\kappa}$ are fixed, $\phi$ is evaluated at each of them and the resulting function values are shifted upwards by the same value $v$, resulting in $\phi(r_1)+v$ and $\phi(r_2)+v$. The expressions $r_1^{\prime} := \phi^{-1}(\phi(r_1) + v)$ and $r_2^{\prime} := \phi^{-1}(\phi(r_2) + v)$ then correspond to the locations where $\phi$ reaches the shifted values. The lemma's claim is simply that the thick orange line, corresponding to $\abs{r_1^{\prime} - r_2^{\prime}}$, is at most as long as the thick blue line, corresponding to $\abs{r_1 - r_2}$.}
	\label{Fig:non_expansion}
\end{figure}

\begin{lemma} \label{Lem:non_expansion_prop}
	Let $\phi: D_{\phi} \ra I_{\phi}$ be as specified in Definition \ref{Def:D_k_def} and analyzed in Lemma \ref{Lem:phi_inverse}. Define the extension $\widehat{\phi}^{-1}: \R_{>0} \ra \R$ of its inverse $\phi^{-1}$ to $\R_{>0}$ by
	\begin{equation*}
		\widehat{\phi}^{-1}(s)
		:= \begin{cases} 
			\phi^{-1}(s) & s \in I_{\phi} = \oointv{\inf \phi}{\sup \phi} , \\ 
			\kappa & s \in \cointv{\sup \phi}{\infty} .
		\end{cases}
	\end{equation*}
	Note that assuming $\kappa = \infty$ leads to $\sup \phi = \infty$ (because a non-decreasing convex function on $\R_{>0}$ must either be constant or unbounded and we require $\phi$ to be strictly increasing, thus preventing it from being constant) and consequently $\widehat{\phi}^{-1} \equiv \phi^{-1}$. In particular, $\widehat{\phi}^{-1}$ cannot attain the value $\infty$.
	
	For this function $\widehat{\phi}^{-1}$ and all $r, \tilde{r} \in D_{\phi}$, $v \in \R_+$ one has
	\begin{align*}
		\abs{\widehat{\phi}^{-1}(\phi(r) + v) - \widehat{\phi}^{-1}(\phi(\tilde{r}) + v)}
		\leq \abs{r - \tilde{r}} .
	\end{align*}
	An illustration of this property can be found in Figure \ref{Fig:non_expansion}.
\end{lemma}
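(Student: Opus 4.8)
The plan is to deduce the estimate from two soft structural facts about the extended inverse, namely that $\widehat{\phi}^{-1}$ is non-decreasing and concave on $\ooint{\inf\phi}{\infty}$, and then to invoke the elementary property of concave functions that shifted increments are monotone. Once these are in place, the inequality falls out in one line.

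First I would record that $\phi^{-1}$ is strictly increasing (Lemma \ref{Lem:phi_inverse}) and, since $\phi$ is convex and increasing, also concave; this is the elementary ``inverse of a convex increasing function is concave'' (Lemma \ref{Lem:convex_inverse}). Next I would transfer both properties to the extension. Monotonicity is immediate: $\phi^{-1}$ takes values in $D_\phi = \ooint{0}{\kappa}$, hence lies everywhere strictly below $\kappa$, and $\widehat{\phi}^{-1}$ merely continues it by the constant $\kappa$, so it is non-decreasing on all of $\ooint{\inf\phi}{\infty}$. Concavity amounts to gluing a concave function to a constant at its left-limiting value; a clean way to see it is to note that for each $c < \sup\phi$ the function $s \mapsto \phi^{-1}(\min(s,c))$ is concave (a composition of the concave non-decreasing function $\phi^{-1}$ with the concave function $\min(\cdot,c)$), and that these functions converge pointwise to $\widehat{\phi}^{-1}$ as $c \nearrow \sup\phi$ (using continuity of $\phi^{-1}$ and $\lim_{s \nearrow \sup\phi}\phi^{-1}(s) = \kappa$), so $\widehat{\phi}^{-1}$ is concave as a pointwise limit of concave functions. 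When $\kappa = \infty$ one has $\widehat{\phi}^{-1} \equiv \phi^{-1}$ and there is nothing to glue.

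With these in hand, fix $r,\tilde r \in D_\phi$ and $v \in \R_+$; by symmetry assume $r \ge \tilde r$, so that $a := \phi(\tilde r) \le b := \phi(r)$ with $a, b \in I_\phi$. Since $\widehat{\phi}^{-1}$ is non-decreasing and $a+v \le b+v$, the difference $\widehat{\phi}^{-1}(b+v) - \widehat{\phi}^{-1}(a+v)$ is non-negative, hence equals the left-hand side of the claim. Now I would apply the standard fact that for a concave function $g$ on an interval and a fixed $h \ge 0$, the map $x \mapsto g(x+h) - g(x)$ is non-increasing; taking $g = \widehat{\phi}^{-1}$, $h = v$ and the points $a \le b$ yields
\begin{equation*}
	\widehat{\phi}^{-1}(b+v) - \widehat{\phi}^{-1}(a+v) \;\le\; \widehat{\phi}^{-1}(b) - \widehat{\phi}^{-1}(a) \;=\; \phi^{-1}(b) - \phi^{-1}(a) \;=\; r - \tilde r \;=\; \abs{r - \tilde r},
\end{equation*}
which is exactly the assertion.

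The only genuinely delicate point is the concavity of $\widehat{\phi}^{-1}$ across the threshold $\sup\phi$ in the case $\kappa < \infty$; everything else is bookkeeping. An alternative to the pointwise-limit argument, and presumably the route of a more hands-on proof, is to distinguish the cases according to whether $\phi(r)+v$ and $\phi(\tilde r)+v$ lie in $I_\phi$ or in $\cointv{\sup\phi}{\infty}$, handling each by concavity of $\phi^{-1}$ together with $\kappa = \lim_{s\nearrow\sup\phi}\phi^{-1}(s)$; this is the content illustrated (for $\kappa = \infty$, where it is vacuous) in Figure \ref{Fig:non_expansion}.
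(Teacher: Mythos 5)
Your proof is correct, but it takes a genuinely different route from the paper. The paper works entirely on the side of $\phi$: it sets $r' := \widehat{\phi}^{-1}(\phi(r)+v)$, $\tilde r' := \widehat{\phi}^{-1}(\phi(\tilde r)+v)$, observes $r' \geq r$, $\tilde r' \geq \tilde r$, and then distinguishes four cases according to whether $r'$ and $\tilde r'$ hit the cap $\kappa$ and whether $\sup\phi$ is finite, handling each by the slope-monotonicity property of the convex function $\phi$ (including, in the last case, a separate continuous extension $\widehat{\phi}$). You instead pass to the dual side and prove once and for all that $\widehat{\phi}^{-1}$ is non-decreasing and concave on $\oointv{\inf\phi}{\infty}$, the only delicate step being concavity across $\sup\phi$ when $\kappa<\infty$, which you settle neatly via the pointwise limit of the concave functions $s\mapsto\phi^{-1}(\min(s,c))$; the claim then follows from the single standard fact that shifted increments of a concave function are non-increasing. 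The two routes are essentially dual formulations of the same convexity fact. What your approach buys is the elimination of the case analysis: the concavity of the extension absorbs all four of the paper's cases, at the modest cost of the limit argument needed to establish that concavity. You correctly anticipate the paper's case-splitting route in your closing remark, though you should note that the paper invokes it in all cases, not only when $\kappa=\infty$ (where, as you say, the extension issue is vacuous).
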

\begin{proof}
	We use arguments very similar to those in \cite{SlavaUSS}. 
	Fix $r, \tilde{r} \in D_{\phi}$ and $v \in \R_+$. Assume w.l.o.g.~that $r > \tilde{r}$ and set
	\begin{align*}
		r^{\prime} := \widehat{\phi}^{-1}(\phi(r) + v) , \\
		\tilde{r}^{\prime} := \widehat{\phi}^{-1}(\phi(\tilde{r}) + v) ,
	\end{align*}
	s.t.~what we need to prove becomes $\abs{r^{\prime} - \tilde{r}^{\prime}} \leq \abs{r - \tilde{r}}$.
	
	Since $\phi^{-1}$ is strictly increasing, $\widehat{\phi}^{-1}$ is still non-decreasing, so $r^{\prime} \geq \tilde{r}^{\prime}$ and
	\begin{align}
		&r^{\prime} = \widehat{\phi}^{-1}(\phi(r) + v) \geq \widehat{\phi}^{-1}(\phi(r)) = \phi^{-1}(\phi(r)) = r, \label{Eq:r_prime_ineq_1} \\
		&\tilde{r}^{\prime} = \widehat{\phi}^{-1}(\phi(\tilde{r}) + v) \geq \widehat{\phi}^{-1}(\phi(\tilde{r})) = \phi^{-1}(\phi(\tilde{r})) = \tilde{r}. \label{Eq:r_prime_ineq_2}
	\end{align}
	We now prove the claim by distinguishing four cases w.r.t.~the values of $r^{\prime}, \tilde{r}^{\prime}$ and $\sup \phi$, given that $r^{\prime} \geq \tilde{r}^{\prime}$ (as observed above) and $r^{\prime}, \tilde{r}^{\prime} \leq \kappa$ (by definition of $\widehat{\phi}^{-1}$):
	\begin{enumerate}
		\item Suppose $r^{\prime} = \tilde{r}^{\prime}$. Then $\abs{r^{\prime} - \tilde{r}^{\prime}} \leq \abs{r - \tilde{r}}$ holds trivially.
		\item Suppose $\tilde{r}^{\prime} < r^{\prime} < \kappa$. Then $r^{\prime}, \tilde{r}^{\prime} \in D_{\phi}$ and therefore $\phi(r^{\prime}) = \phi(r) + v$ as well as $\phi(\tilde{r}^{\prime}) = \phi(\tilde{r}) + v$. Furthermore, it is well-known that $\phi$ being convex implies that
		\begin{align*}
			S_{\phi}(r_1,r_2)
			&:= \frac{\phi(r_1) - \phi(r_2)}{r_1 - r_2}
		\end{align*}
		is non-decreasing in $r_1$ for fixed $r_2$ and vice-versa. Thus
		\begin{align*}
			\frac{\phi(r^{\prime}) - \phi(\tilde{r}^{\prime})}{r^{\prime} - \tilde{r}^{\prime}}
			&= S_{\phi}(r^{\prime}, \tilde{r}^{\prime}) 
			\stackrel{\eqref{Eq:r_prime_ineq_1}}{\geq} S_{\phi}(r, \tilde{r}^{\prime}) 
			\stackrel{\eqref{Eq:r_prime_ineq_2}}{\geq} S_{\phi}(r, \tilde{r}) 
			= \frac{\phi(r) - \phi(\tilde{r})}{r - \tilde{r}} \\
			&= \frac{(\phi(r) + v) - (\phi(\tilde{r}) + v)}{r - \tilde{r}} 
			= \frac{\phi(r^{\prime}) - \phi(\tilde{r}^{\prime})}{r - \tilde{r}}_{\textbf{,}}
		\end{align*}
		which, because all occurring numerators and denominators are positive, implies $r^{\prime} - \tilde{r}^{\prime} \leq r - \tilde{r}$. 
		For the general case, i.e.~without assuming $r > \tilde{r}$, this translates precisely to the desired relation $\abs{r^{\prime} - \tilde{r}^{\prime}} \leq \abs{r - \tilde{r}}$.
		\item Suppose $\tilde{r}^{\prime} < \kappa = r^{\prime}$ and $\sup \phi = \infty$. Then $I_{\phi} = \oointv{\inf \phi}{\infty}$, meaning that $\phi(r) + v$ and $\phi(\tilde{r}) + v$ are contained in $I_{\phi}$ and thus we again have $r^{\prime}, \tilde{r}^{\prime} \in D_{\phi}$. Since the analysis of the second case relied solely on this property, it applies to this case as well.
		\item Suppose $\tilde{r}^{\prime} < \kappa = r^{\prime}$ and $\sup \phi < \infty$. Note that $r^{\prime}$ attaining the value $\kappa$ must mean $\kappa < \infty$. Hence we may continuously extend $\phi$ to the domain $\ocintv{0}{\kappa} = D_{\phi} \cup \{\kappa\} \subset \R_{>0}$, denoting the extension $\widehat{\phi}: \ocintv{0}{\kappa} \ra \R$, i.e.~we set
		\begin{align*}
			\widehat{\phi}(\hat{r})
			&:= \begin{cases}
				\phi(\hat{r}) & \hat{r} \in D_{\phi} , \\
				\sup \phi & \hat{r} = \kappa .
			\end{cases}
		\end{align*}
		By construction, $\widehat{\phi}$ is still convex and inverts the restriction of $\widehat{\phi}^{-1}$ to $\ocintv{\inf \phi}{\sup \phi}$. Observe that $\tilde{r}^{\prime} < \kappa$ again yields $\tilde{r}^{\prime} \in D_{\phi}$ and thus $\phi(\tilde{r}^{\prime}) = \phi(\tilde{r}) + v$ and that, by definition of the involved quantities, $r^{\prime} = \kappa$ implies $\phi(r) + v \geq \sup \phi$.
		Equipped with all of these relations, we can employ the same approach as in the second case and obtain
		\begin{align*}
			\frac{\widehat{\phi}(r^{\prime}) - \widehat{\phi}(\tilde{r}^{\prime})}{r^{\prime} - \tilde{r}^{\prime}}
			&= S_{\widehat{\phi}}(r^{\prime},\tilde{r}^{\prime})
			\stackrel{\eqref{Eq:r_prime_ineq_1}}{\geq} S_{\widehat{\phi}}(r,\tilde{r}^{\prime})
			\stackrel{\eqref{Eq:r_prime_ineq_2}}{\geq} S_{\widehat{\phi}}(r,\tilde{r})
			= \frac{\widehat{\phi}(r) - \widehat{\phi}(\tilde{r})}{r - \tilde{r}} \\
			&= \frac{\phi(r) - \phi(\tilde{r})}{r - \tilde{r}}
			= \frac{(\phi(r) + v) - (\phi(\tilde{r}) + v)}{r - \tilde{r}} 
			= \frac{(\phi(r) + v) - \phi(\tilde{r}^{\prime})}{r - \tilde{r}} \\
			&\geq \frac{\sup \phi - \phi(\tilde{r}^{\prime})}{r - \tilde{r}} 
			= \frac{\widehat{\phi}(r^{\prime}) - \widehat{\phi}(\tilde{r}^{\prime})}{r - \tilde{r}}_,
		\end{align*}
		again showing $r^{\prime} - \tilde{r}^{\prime} \leq r - \tilde{r}$, so in the general case $\abs{r^{\prime} - \tilde{r}^{\prime}} \leq \abs{r - \tilde{r}}$.
	\end{enumerate}
\end{proof}

\subsection{Auxiliary Results for Proving Spectral Gap Estimates}

\begin{lemma} \label{Lem:convex_inverse}
	Suppose $h: I_1 \ra I_2$ is a strictly monotone, continuous function mapping between open intervals $I_1, I_2 \subseteq \R$. Then $h$ maps bijectively onto its image, w.l.o.g.~$I_2$, and its inverse $h^{-1}: I_2 \ra I_1$ has the same monotonicity property. Furthermore,
	\begin{itemize}
		\item if $h$ is increasing, then it is convex if and only if $h^{-1}$ is concave
		\item if $h$ is decreasing, then it is convex if and only if $h^{-1}$ is convex.
	\end{itemize}
\end{lemma}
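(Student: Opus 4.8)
The plan is to reduce everything to a single, clean computation about second derivatives (when they exist) and then handle the general case by a standard smoothing/continuity argument, or alternatively to argue directly from the defining inequality of convexity without any differentiability assumption. I would actually prefer the second route, since $h$ is merely assumed continuous and strictly monotone, so no derivatives are available a priori.

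First I would dispose of the preliminary claims: a strictly monotone continuous function on an open interval is injective, and by the intermediate value theorem its image is again an open interval, onto which it is a bijection; its inverse $h^{-1}$ is then continuous and inherits the same monotonicity (increasing inverts to increasing, decreasing to decreasing). These are textbook facts and I would state them with a one-line justification, perhaps citing a real-analysis reference.

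Next, the heart of the matter. Consider first the increasing case. Let $y_1, y_2 \in I_2$ and $\lambda \in \coint{0}{1}$, and set $x_i := h^{-1}(y_i)$, so $h(x_i) = y_i$. Concavity of $h^{-1}$ at these points is the inequality $h^{-1}(\lambda y_1 + (1-\lambda) y_2) \geq \lambda x_1 + (1-\lambda) x_2$. Since $h$ is increasing, this is equivalent to $\lambda y_1 + (1-\lambda) y_2 \geq h(\lambda x_1 + (1-\lambda) x_2)$, i.e. to $\lambda h(x_1) + (1-\lambda) h(x_2) \geq h(\lambda x_1 + (1-\lambda) x_2)$, which is exactly convexity of $h$ at $x_1, x_2$. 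As $(x_1, x_2)$ ranges over all of $I_1$ when $(y_1, y_2)$ ranges over $I_2$ (by bijectivity), the two conditions are equivalent. For the decreasing case the argument is identical except that applying the order-reversing map $h$ flips the inequality: concavity of $h^{-1}$ would reverse, so it is \emph{convexity} of $h^{-1}$ that matches convexity of $h$; carrying the signs through carefully gives the stated equivalence. I would write this out symmetrically for both cases, being careful that ``convex'' for $h^{-1}$ is tested along the chord in $I_2$ while the pulled-back inequality lives in $I_1$.

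The main obstacle, such as it is, is purely bookkeeping: making sure the equivalence of inequalities is genuinely an ``if and only if'' rather than a one-way implication, which requires the bijectivity established in the first step (so that every pair in $I_1$ is realized as $(h^{-1}(y_1), h^{-1}(y_2))$), and keeping the direction of every inequality straight when composing with an order-reversing function. There is no analytic difficulty and no need for differentiability; the whole proof is elementary once the monotonicity of $h^{-1}$ is in hand.
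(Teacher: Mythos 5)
Your argument is correct and complete. Note that the paper itself does not prove this lemma: its ``proof'' is simply a citation to \cite[Lemma A.4]{PSS_paper}, so there is no in-paper argument to compare against. Your self-contained route is the natural one: dispatch the bijectivity and monotonicity of $h^{-1}$ with the intermediate value theorem, then observe that the chord inequality defining concavity of $h^{-1}$ at $y_1, y_2$ transforms, upon applying $h$ (an order isomorphism in the increasing case, an order anti-isomorphism in the decreasing case) and substituting $x_i = h^{-1}(y_i)$, $y_i = h(x_i)$, precisely into the chord inequality defining convexity of $h$ at $x_1, x_2$, with the inequality sign flipping exactly once in the decreasing case. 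The bijectivity of $h$ is what upgrades this to a genuine ``if and only if,'' as you correctly point out. One tiny cosmetic remark: you write $\lambda \in \coint{0}{1}$ where the standard range is the closed interval $\ccint{0}{1}$; the endpoint cases are trivial equalities so nothing is lost, but it reads as a typo. You could also remark explicitly that $\lambda x_1 + (1-\lambda) x_2 \in I_1$ and $\lambda y_1 + (1-\lambda) y_2 \in I_2$ because $I_1$, $I_2$ are intervals, which is needed to apply $h$ and $h^{-1}$ at those points; this is implicit in your write-up but worth a half-sentence. No differentiability, smoothing, or approximation is needed, and your instinct to avoid the second-derivative computation is the right one given the hypotheses.
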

\begin{proof}
	See \cite[Lemma A.4]{PSS_paper}
\end{proof}

\begin{proposition} \label{Prop:alternative_cond}
	For any $k \in \R_{>0}$, given all the other conditions, condition (iii) of Definition \ref{Def:Lambda_k} is equivalent to concavity of $s \mapsto \ell(\exp(-s))^{1/k}$ restricted to the domain
	\begin{equation*}
		D := \oointv{- \log \sup\{t \in \R_{>0} \mid \ell(t) > 0\}}{\infty} .
	\end{equation*}
\end{proposition}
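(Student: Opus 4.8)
The plan is to exhibit the two functions appearing in the two conditions as (essentially) inverse to one another, and then read off the equivalence from Lemma~\ref{Lem:convex_inverse}. Throughout, write $t^\ast := \sup\{t \in \R_{>0} \mid \ell(t) > 0\}$, so that $\supp(\ell) = \ooint{0}{t^\ast}$ and $D = \ooint{-\log t^\ast}{\infty}$, and recall (from the remark following Definition~\ref{Def:Lambda_k}, which uses the ``other conditions'' together with continuity of $\ell$) that $\ell$ restricted to $\supp(\ell)$ is a continuous, strictly decreasing bijection onto $\ooint{0}{\sup\ell}$, with strictly decreasing inverse $\ell^{-1}$.

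First I would introduce $\psi \colon D \to \R_{>0}$, $\psi(s) := \ell(\exp(-s))^{1/k}$, and record the following: (a) $D$ is exactly the set of $s$ with $\exp(-s) \in \supp(\ell)$; (b) $\psi$ is continuous, being a composition of continuous maps; (c) $\psi$ is strictly increasing, since $s \mapsto \exp(-s)$ is strictly decreasing, $\ell$ is strictly decreasing on its support, and $r \mapsto r^{1/k}$ is strictly increasing; (d) consequently $\psi$ maps $D$ bijectively onto $\ooint{0}{(\sup\ell)^{1/k}}$. Next I would compute its inverse: solving $\ell(\exp(-s))^{1/k} = r$ for $s$ gives $\exp(-s) = \ell^{-1}(r^k)$, hence
\[
	\psi^{-1}(r) = -\log\bigl(\ell^{-1}(r^k)\bigr), \qquad r \in \ooint{0}{(\sup\ell)^{1/k}}.
\]
In particular, condition (iii) of Definition~\ref{Def:Lambda_k} --- log-concavity of $r \mapsto \ell^{-1}(r^k)$ --- is, by definition of log-concavity, the same as convexity of $r \mapsto -\log \ell^{-1}(r^k) = \psi^{-1}(r)$ on $\ooint{0}{(\sup\ell)^{1/k}}$.

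Finally I would apply Lemma~\ref{Lem:convex_inverse} to the strictly increasing continuous bijection $\psi^{-1}$ between the open intervals $\ooint{0}{(\sup\ell)^{1/k}}$ and $D$: for an increasing function, convexity is equivalent to concavity of its inverse, so $\psi^{-1}$ is convex if and only if $(\psi^{-1})^{-1} = \psi$ is concave on $D$. Chaining the equivalences yields: condition (iii) $\iff$ $\psi^{-1}$ convex $\iff$ $s \mapsto \ell(\exp(-s))^{1/k}$ concave on $D$, which is the claim. I do not expect a real obstacle here; the only points requiring care are the bookkeeping of the possibly infinite endpoints (the cases $t^\ast = \infty$ and $\sup\ell = \infty$), so that the relevant sets really are open intervals and Lemma~\ref{Lem:convex_inverse} genuinely applies, and using that lemma in the correct direction --- it should be applied to $\psi^{-1}$, not to $\psi$, since we want ``$\psi^{-1}$ convex $\iff$ $\psi$ concave'' rather than ``$\psi$ convex $\iff$ $\psi^{-1}$ concave''.
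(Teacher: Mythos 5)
Your proof is correct and self-contained. Note that the paper does not actually prove this proposition: it simply cites \cite[Proposition A.5]{PSS_paper} for the case $k \in \N$ and remarks that the argument carries over verbatim to all $k \in \R_{>0}$. There is therefore no in-paper proof to compare against, but your argument almost certainly reconstructs the cited one, since its key ingredient, Lemma~\ref{Lem:convex_inverse}, is precisely the tool the paper imports from \cite[Lemma A.4]{PSS_paper} for this purpose. Your bookkeeping is sound: $\psi(s) := \ell(\exp(-s))^{1/k}$ is indeed a strictly increasing continuous bijection from $D$ onto the open interval $\ooint{0}{(\sup\ell)^{1/k}}$ (open in both degenerate cases $t^\ast = \infty$ and $\sup\ell = \infty$ as well), with $\psi^{-1}(r) = -\log\ell^{-1}(r^k)$, so condition (iii) (log-concavity of $r \mapsto \ell^{-1}(r^k)$, i.e.\ convexity of $\psi^{-1}$) is, by Lemma~\ref{Lem:convex_inverse} applied to $\psi^{-1}$, equivalent to concavity of $\psi$ on $D$. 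Your caution about applying the lemma to $\psi^{-1}$ rather than $\psi$ is exactly the right point to flag, as the two applications yield logically distinct biconditionals and only the former chains with the first equivalence. The one small thing you could add for completeness is that $\psi^{-1}$ is continuous because $\psi$ is a continuous strictly monotone map between open intervals and hence a homeomorphism onto its image, so Lemma~\ref{Lem:convex_inverse} genuinely applies.
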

\begin{proof}
	For $k \in \N$ this was shown in \cite[Proposition A.5]{PSS_paper}. As the proof only requires $k$ to be positive, it does not need to be modified to cover all the cases $k \in \R_{>0}$.
\end{proof}

\section{Numerical Support} \label{App:num_supp}

\begin{figure}[tb]
	\centering
	\includegraphics[scale=0.6]{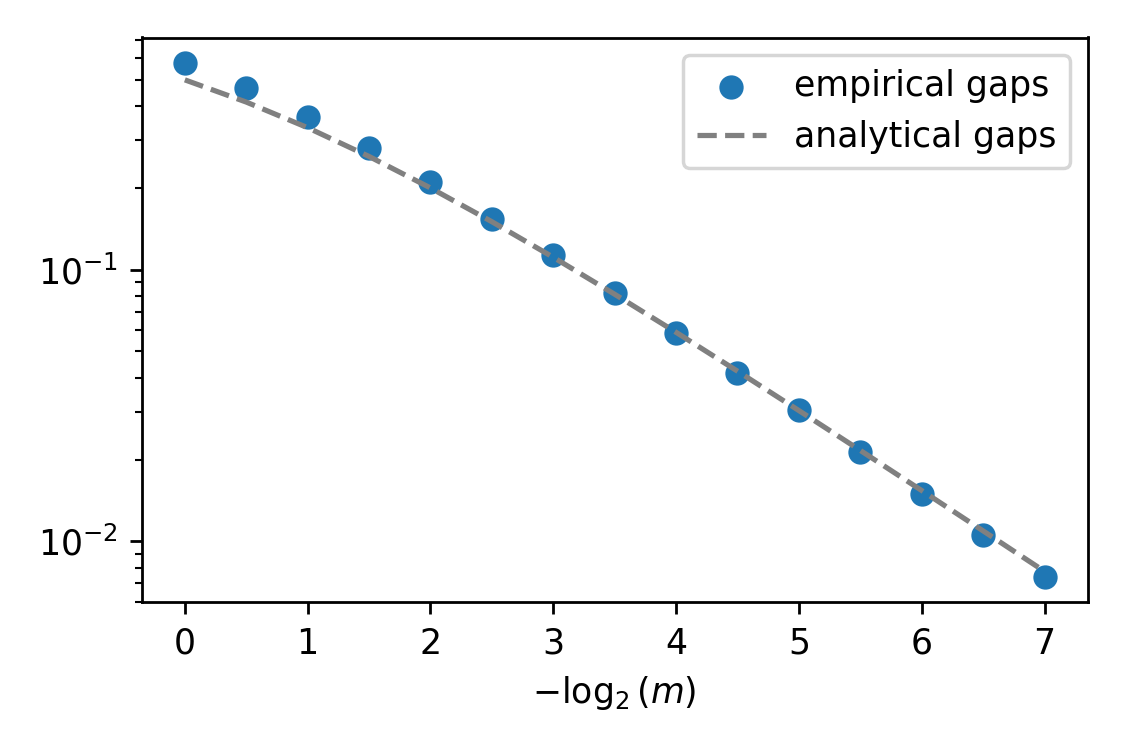}
	\includegraphics[scale=0.6]{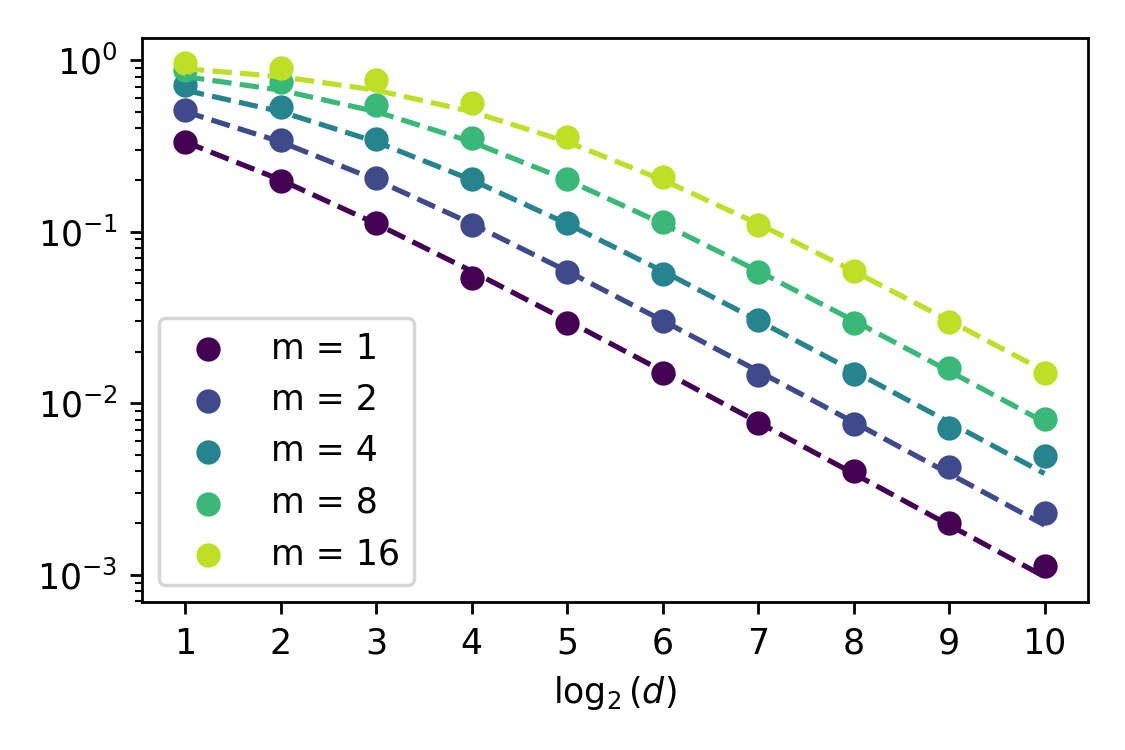}
	\caption{Comparisons of empirical spectral gaps (scattered points), computed according to \eqref{Eq:emp_gap_heuristic}, and spectral gap estimates provided by Theorem \ref{Thm:k-PSS_rot_inv} (dashed lines).
		The figure on the left shows these values for PSS applied to $\varrho(r \theta) = r^{1-d} \exp(-r^m)$ with parameter choices $m = 2^0, 2^{-0.5},..., 2^{-7}$. The IATs for the empirical gaps were computed from the sample log radii, i.e.~with $g(x) := \log \norm{x}$, which renders the results independent of the sample space dimension $d$.
		The right figure similarly shows empirical gaps and gap estimates for USS applied to $\varrho(r \theta) = \exp(-r^m)$ with parameter values $m = 2^0,...,2^4$ in dimensions $d=2^1,...,2^{10}$. Here the IATs for the empirical gaps were computed from the sample radii, i.e.~with $g(x) := \norm{x}$.}
	\label{Fig:emp_gaps}
\end{figure}

We can give some numerical support for the sharpness of our spectral gap estimates in Theorem \ref{Thm:k-PSS_rot_inv} by comparing them with heuristic approximations of the gaps from experimental data. The heuristic we consider is derived by identifying two different notions of effective sample sizes. 

Let $P$ be a transition kernel with invariant distribution $\pi$ and $\gap_{\pi}(P) > 0$. Denote by $(X_m)_{m \in \N_0}$ a Markov chain with transition kernel $P$ that is initialized with its invariant distribution, i.e.~$X_0 \sim \pi$.

Further denoting by $\IAT_g(P)$ the integrated autocorrelation time of $P$ w.r.t.~some reasonable summary function $g: \R^d \ra \R$, one would commonly define the effective sample size as
\begin{align*}
	n_{\text{eff}} := \frac{n}{\IAT_g(P)}_{\textbf{.}}
\end{align*}
On the other hand it is known from \cite[Corollary 3.37]{RudolfDiss} that, for a suitably normalized integrand, the mean squared Monte Carlo integration error of the truncated chain $(X_m)_{m \leq n}$ asymptotically (i.e.~as $n \ra \infty$) behaves exactly as
\begin{align*}
	\frac{2 - \gap_{\pi}(P)}{n \cdot \gap_{\pi}(P)}
	= \frac{1}{n} \left( \frac{2}{\gap_{\pi}(P)} -  1 \right) .
\end{align*}
As this quantity would be $1/n$ with i.i.d.~sampling, we may view its reciprocal as another notion of effective sample size. By informally identifying the two, we obtain the heuristic
\begin{align}
	\frac{\IAT_g(P)}{n} \approx \frac{1}{n} \left( \frac{2}{\gap_{\pi}(P)} -  1 \right)
	\quad \LRA \quad \gap_{\pi}(P) \approx \frac{2}{\IAT_g(P) + 1}
	\label{Eq:emp_gap_heuristic}
\end{align}
for the approximate computation of $\gap_{\pi}(P)$. This enables us to estimate spectral gaps from sample runs through their (also approximated) IATs.

The results of some example applications of this technique are shown in Figure \ref{Fig:emp_gaps}. They show the empirical gaps to match our gap estimates very well, which, if one believes the heuristic to be good, suggests the gap estimates to be very sharp. We note however that, in our experience, the approach only works this well if the spectral gaps in question are small -- if the proven gap estimates are large, the empirical gaps are consistently even larger, which causes discrepancies between the two.


\providecommand{\bysame}{\leavevmode\hbox to3em{\hrulefill}\thinspace}
\providecommand{\MR}{\relax\ifhmode\unskip\space\fi MR }
\providecommand{\MRhref}[2]{%
	\href{http://www.ams.org/mathscinet-getitem?mr=#1}{#2}
}
\providecommand{\href}[2]{#2}

\end{document}